\DeclareMathOperator{\rep}{\mathrm{rep}}
\DeclareMathOperator{\Ext}{\mathrm{Ext}}
\DeclareMathOperator{\Hom}{\mathrm{Hom}}
\DeclareMathOperator{\ZZ}{\mathbb{Z}}
\DeclareMathOperator{\AAA}{\mathbb{A}}
\DeclareMathOperator{\cD}{\mathcal{D}}
\DeclareMathOperator{\bc}{\textbf{c}}
\theoremstyle{plain}
\theoremstyle{definition}
\newtheorem{theorem}{Theorem}[section]
\newtheorem{remark}[theorem]{Remark}
\newtheorem{lemma}[theorem]{Lemma}
\newtheorem{definition}[theorem]{Definition}
\newtheorem{notation}[theorem]{Notation}
\newtheorem{example}[theorem]{Example}
\newtheorem{corollary}[theorem]{Corollary}
\newtheorem*{lemma*intro}{Lemma \ref{maintechlemma}}
\DeclareMathAlphabet{\mathpzc}{OT1}{pzc}{m}{it}
\title{Combinatorics of exceptional sequences in type A}
\author{Alexander Garver}
\address{Laboratoire de Combinatoire et d'Informatique Math\'ematique, Universit\'e du Qu\'ebec \`a Montr\'eal, Montr\'eal, QC H3C 3P8, Canada}
\email{alexander.garver@lacim.ca}
\author{Kiyoshi Igusa}
\address{Department of Mathematics,
Brandeis University, Waltham, MA 02454, USA}
\email{igusa@brandeis.edu}
\author{Jacob P. Matherne}
\address{Department of Mathematics and Statistics,
University of Massachusetts Amherst, Amherst, MA 01003, USA}
\email{matherne@math.umass.edu}
\author{Jonah Ostroff}
\address{Department of Mathematics, University of Washington, Seattle, WA 98195, USA}
\email{ostroff@math.washington.edu}
\thanks{The first author was supported by a Research Training Group, RTG grant DMS-1148634.}
\thanks{The second author was supported by National Security Agency Grant H98230-13-1-0247.}
\thanks{The third author was supported by a Graduate Assistance in Areas of National Need Fellowship, GAANN grant P200A120001, and an LSU Dissertation Year Fellowship.}
\begin{document}

\begin{abstract}
Exceptional sequences are certain ordered sequences of quiver representations. We introduce a class of objects called strand diagrams and use this model to classify exceptional sequences of representations of a quiver whose underlying graph is a type $\mathbb{A}_n$ Dynkin diagram. We also use variations of this model to classify \textbf{c}-matrices of such quivers, to interpret exceptional sequences as linear extensions of posets, and to give a simple bijection between exceptional sequences and certain chains in the lattice of noncrossing partitions. This work is an extension of the classification of exceptional sequences for the linearly-ordered quiver by the first and third authors.
\end{abstract}

\maketitle
\tableofcontents

\section{Introduction}
Exceptional sequences are certain sequences of quiver representations with strong homological properties.  They were introduced in \cite{gr87} to study exceptional vector bundles on $\mathbb{P}^2$, and more recently, Crawley-Boevey showed that the braid group acts transitively on the set of complete exceptional sequences (exceptional sequences of maximal length) \cite{c93}.  This result was generalized to hereditary Artin algebras by Ringel \cite{r94}.  Since that time, Meltzer has also studied exceptional sequences for weighted projective lines \cite{m04}, and Araya for Cohen-Macaulay modules over one\textcolor{green}{-}dimensional graded Gorenstein rings with a simple singularity \cite{a99}. Exceptional sequences have been shown to be related to many other areas of mathematics since their invention:
\begin{itemize}
\item chains in the lattice of noncrossing partitions \cite{b03,hk13,it09},
\item $\textbf{c}$-matrices and cluster algebras \cite{st13},
\item factorizations of Coxeter elements \cite{is10}, and
\item $t$-structures and derived categories \cite{bez03, bk89, r90}.
\end{itemize}

\noindent Despite their ubiquity, very little work has been done to concretely describe exceptional sequences, even for path algebras of Dynkin quivers \cite{a13, gm1}.  In this paper, we give a concrete description of exceptional sequences for type $\AAA_n$ quivers of any orientation.  This work extends and elaborates on a classification of exceptional sequences for the linearly-ordered quiver obtained in \cite{gm1} by the first and third authors.

Exceptional sequences are composed of indecomposable representations which have a particularly nice description.  For a quiver $Q$ of type $\mathbb{A}_n$, the indecomposable representations are completely determined by their dimension vectors, which are of the form $(0,\ldots,0,1,\ldots,1,0,\ldots,0) \ \in \mathbb{Z}_{\ge 0}^n$.  Let us denote such a representation by $X_{i,j}^{\epsilon}$, where $\epsilon$ is a vector that keeps track of the orientation of the quiver, and $i+1$ and $j$ are the positions where the string of $1$'s begins and ends, respectively.

This simple description allows us to view exceptional sequences as combinatorial objects.  Define a map $\Phi_\epsilon$ which associates to each indecomposable representation $X_{i,j}^{\epsilon}$ a curve $\Phi_\epsilon(X_{i,j}^{\epsilon})$ connecting two of $n+1$ points in $\mathbb{R}^2$. We will refer to such curves as \textbf{strands}.\footnote{The curves $\Phi_\epsilon(X_{i,j}^{\epsilon})$ will have some additional topological conditions that we omit here.}

\begin{figure}[h]
\includegraphics[scale=1]{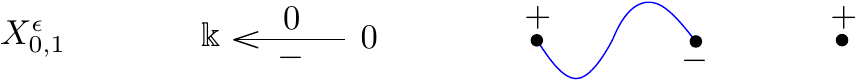}
\caption{An example of the indecomposable representation $X_{0,1}^\epsilon$ on a type $\mathbb{A}_2$ quiver and the corresponding strand $\Phi_\epsilon(X_{0,1}^{\epsilon})$}
\label{strex}
\end{figure}

\noindent As exceptional sequences are collections of representations, the map $\Phi_\epsilon$ allows one to regard them as collections of strands.  The following lemma is the foundation for all of our results in this paper (it characterizes the homological data encoded by a pair of strands and thus by a pair of representations). An exceptional sequence is completely determined by its \textbf{exceptional pairs} (i.e., its subsequences of length 2). Thus, Lemma~\ref{maintechlemma}, which we now state, allows us to completely classify exceptional sequences using strand diagrams. 

\begin{lemma*intro}
Let $Q_\epsilon$ be a type $\mathbb{A}$ Dynkin quiver. Fix two distinct indecomposable representations $U, V \in \text{ind}(\text{rep}_\Bbbk(Q_\epsilon))$.

$\begin{array}{rll}
a) & \text{The strands $\Phi_{\epsilon}(U)$ and $\Phi_{\epsilon}(V)$ intersect nontrivially if and only if neither $(U,V)$ nor $(V,U)$ are}\\
& \text{exceptional pairs.}\\
b) & \text{The strand $\Phi_{\epsilon}(U)$ is clockwise from $\Phi_{\epsilon}(V)$  if and only if $(U,V)$ is an exceptional pair and $(V,U)$}\\
& \text{is not an exceptional pair.}\\
c) & \text{The strands $\Phi_{\epsilon}(U)$ and $\Phi_{\epsilon}(V)$ do not intersect at any of their endpoints and they do not intersect}\\
& \text{nontrivially if and only if $(U, V)$ and $(V, U)$ are both exceptional pairs.}
\end{array}$
\end{lemma*intro}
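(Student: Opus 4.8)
The plan is to reduce everything to explicit combinatorial/homological computations using the string-module description of indecomposables over a type $\mathbb{A}$ quiver. Recall that for such quivers, each indecomposable $X_{i,j}^\epsilon$ is determined by an interval $[i+1,j]$ of vertices together with the orientation data $\epsilon$, and that $(U,V)$ is an exceptional pair precisely when $\Hom(V,U)=0=\Ext^1(V,U)$ (with the convention that the second-listed representation is "tested against" the first). My strategy is to translate the three homological conditions — vanishing of $\Hom$ and $\Ext^1$ in each of the two orders — directly into the geometry of the strands $\Phi_\epsilon(U)$ and $\Phi_\epsilon(V)$, and then observe that the three cases (a), (b), (c) exhaust and partition all possible geometric configurations of two distinct strands.

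First I would set up the dictionary. Since the strands connect points among $n+1$ fixed points on a line (or in $\mathbb{R}^2$), two distinct strands can be in exactly one of a small number of relative positions: they may share no endpoint and be disjoint; share no endpoint but cross (intersect nontrivially); or share a common endpoint (in which case one is clockwise from the other, or vice versa, depending on the local rotational order of the strands at that shared point). The key geometric input is the definition of "clockwise from," which encodes which strand sits rotationally before the other at a shared endpoint. I would make a table of the possible interval-overlap patterns for $[i+1,j]$ and $[i'+1,j']$ — disjoint, nested, or overlapping (partially crossing) — and, in each pattern, compute $\Hom$ and $\Ext^1$ in both directions using the standard formulas for morphisms and extensions between string modules over $\mathbb{A}_n$. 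These computations are routine but must be done carefully because the orientation vector $\epsilon$ controls whether an overlap produces a nonzero homomorphism or a nonzero extension.

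The heart of the argument is matching the two trichotomies. On the homological side, for a pair of distinct indecomposables exactly one of the following holds: (i) both $(U,V)$ and $(V,U)$ fail to be exceptional; (ii) exactly one of them is exceptional; (iii) both are exceptional. I would prove that (i) corresponds exactly to the strands intersecting nontrivially (part a), that (iii) corresponds exactly to the strands being disjoint and sharing no endpoint (part c), and that (ii) corresponds exactly to a shared endpoint, with the clockwise/counterclockwise distinction recording which of the two orders is the exceptional one (part b). The cleanest way to organize this is to prove part (a) first — an intersection of the strand interiors should be shown equivalent to the existence of a nonzero map or extension in \emph{both} directions — and then handle the shared-endpoint case separately, since a shared endpoint is precisely the degenerate geometric situation that forces exactly one directional $\Hom$ or $\Ext^1$ to vanish while the other persists. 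Part (c) then follows as the complementary case where the strands are genuinely disjoint.

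\textbf{The main obstacle} I anticipate is part (b): correctly aligning the purely geometric notion of "clockwise from" with the asymmetry in the homological vanishing conditions. When two strands share an endpoint, both interval descriptions abut at a common vertex, and whether the resulting exceptional pair is $(U,V)$ or $(V,U)$ depends sensitively on the orientation of the arrows incident to that shared vertex — this is exactly the data that the clockwise convention is designed to capture. I expect the bulk of the careful work to lie in verifying, case by case on the local orientation at the shared point, that the rotational order of the two strands agrees with the direction in which $\Hom$ or $\Ext^1$ vanishes; a sign or orientation error here would swap clockwise with counterclockwise. Once the shared-endpoint analysis is pinned down, parts (a) and (c) should follow from the more robust observation that a transverse interior crossing of strands is symmetric in $U$ and $V$ and hence obstructs exceptionality in both orders simultaneously, while disjointness permits both.
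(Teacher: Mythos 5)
Your proposal follows essentially the same route as the paper: the authors likewise reduce to a case analysis on the relative position of the intervals (interlaced, nested, shared endpoint, disjoint), compute $\Hom$ and $\Ext^1$ in both directions in each case via explicit morphisms and the Euler form with the signs of $\epsilon$ controlling the outcomes, and then match the geometric trichotomy (crossing, shared endpoint with a clockwise order, disjoint) to the homological trichotomy exactly as you describe. Your identification of part (b) as the delicate step — aligning the clockwise convention with the local orientation $\epsilon_k$ at the shared vertex — is also where the paper spends most of its effort.
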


The paper is organized in the following way.  In Section \ref{sec:prelim}, we give the preliminaries on quivers and their representations which are needed for the rest of the paper.

In Section \ref{sec:strands}, we introduce strand diagrams. Later, we decorate our strand diagrams with strand-labelings and oriented edges so that they can keep track of both the ordering of the representations in a complete exceptional sequence as well as the signs of the rows in the $\bc$-matrix it came from.  While unlabeled diagrams classify complete exceptional collections (Theorem \ref{ECbij}), we show that the new decorated diagrams classify exceptional sequences (Theorem \ref{ESbij}).  Although Lemma \ref{maintechlemma} is the main tool that allows us to obtain these results, we delay its proof until Section \ref{firstproofs}.

The work of Speyer and Thomas (see \cite{st13}) allows one to obtain complete exceptional sequences from $\bc$-matrices.  In \cite{onawfr13}, the number of complete exceptional sequences in type $\AAA_n$ is given, and there are more of these than there are $\bc$-matrices.  Thus, it is natural to ask exactly which $\bc$-matrices appear as strand diagrams.  By establishing a bijection between the mixed cobinary trees of Igusa and Ostroff \cite{io13} and a certain subcollection of strand diagrams, we give an answer to this question in Section~\ref{sec:mct}.

In Section \ref{sec:pos}, we ask how many complete exceptional sequences can be formed using the representations in a complete exceptional collection. It turns out that two complete exceptional sequences can be formed in this way if they have the same underlying strand diagram without strand labels. We interpret this number as the number of linear extensions of a poset determined by the strand diagram of the complete exceptional collection. This also gives an interpretation of complete exceptional sequences as linear extensions.

In Section \ref{sec:app}, we give several applications of the theory in type $\AAA$, including combinatorial proofs that two reddening sequences produce isomorphic ice quivers (see \cite{k12} for a general proof in all types using deep category-theoretic techniques) and that there is a bijection between exceptional sequences and saturated chains in the lattice of noncrossing partitions.

{\bf Acknowledgements.~}
A. Garver and J. Matherne gained helpful insight through conversations with E. Barnard, J. Geiger, M. Kulkarni, G. Muller, G. Musiker, D. Rupel, D. Speyer, and G. Todorov.  A. Garver and J. Matherne also thank the 2014 AMS Mathematics Research Communities program for giving us a stimulating place to work. The authors thank an anonymous referee for comments that helped to improve the exposition.

\section{Preliminaries}\label{sec:prelim}

We will be interested in the connection between exceptional sequences and the \textbf{c}-matrices of an acyclic quiver $Q$ so we begin by defining these. After that we review the basic terminology of quiver representations and exceptional sequences, which serve as the starting point in our study of exceptional sequences. We conclude this section by explaining the notation we will use to discuss exceptional representations of quivers that are orientations of a type $\mathbb{A}_n$ Dynkin diagram.

\subsection{Quiver mutation}\label{subsec:quivers}
A \textbf{quiver} $Q$ is a directed graph without loops or 2-cycles. In other words, $Q$ is a 4-tuple $(Q_0,Q_1,s,t)$, where $Q_0 = [m] := \{1,2, \ldots, m\}$ is a set of \textbf{vertices}, $Q_1$ is a set of \textbf{arrows}, and two functions $s, t:Q_1 \to Q_0$ defined so that for every $a \in Q_1$, we have $s(a) \xrightarrow{a} t(a)$. An \textbf{ice quiver} is a pair $(Q,F)$ with $Q$ a quiver and $F \subset Q_0$ \textbf{frozen vertices} with the additional restriction that any $i,j \in F$ have no arrows of $Q$ connecting them. We refer to the elements of $Q_0\backslash F$ as \textbf{mutable vertices}. By convention, we assume $Q_0\backslash F = [n]$ and $F = [n+1,m] := \{n+1, n+2, \ldots, m\}.$ Any quiver $Q$ can be regarded as an ice quiver by setting $Q = (Q, \emptyset)$.

The {\bf mutation} of an ice quiver $(Q,F)$ at mutable vertex $k$, denoted $\mu_k$, produces a new ice quiver $(\mu_kQ,F)$ by the three step process:

(1) For every $2$-path $i \to k \to j$ in $Q$, adjoin a new arrow $i \to j$.

(2) Reverse the direction of all arrows incident to $k$ in $Q$.

(3) Remove all 2-cycles in the resulting quiver as well as all of the arrows between frozen vertices.

\noindent We show an example of mutation below, depicting the mutable (resp. frozen) vertices in black (resp. blue).
\[
\begin{array}{c c c c c c c c c}
\raisebox{.35in}{$(Q,F)$} & \raisebox{.35in}{=} & {\includegraphics[scale = .7]{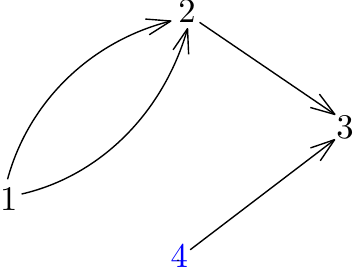}} & \raisebox{.35in}{$\stackrel{\mu_2}{\longmapsto}$} & {\includegraphics[scale = .7]{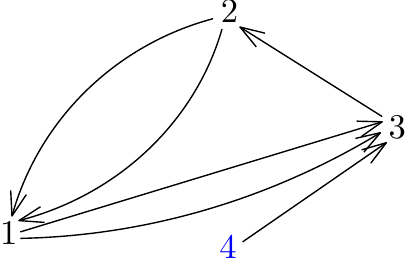}} & \raisebox{.35in}{=} & \raisebox{.35in}{$(\mu_2Q,F)$}
\end{array}
\]

The information of an ice quiver can be equivalently described by its (skew-symmetric) \textbf{exchange matrix}. Given $(Q,F),$ we define $B = B_{(Q,F)} = (b_{ij}) \in \mathbb{Z}^{n\times m} := \{n \times m \text{ integer matrices}\}$ by $b_{ij} := \#\{i \stackrel{a}{\to} j \in Q_1\} - \#\{j \stackrel{a}{\to} i \in Q_1\}.$ Furthermore, ice quiver mutation can equivalently be defined  as \textbf{matrix mutation} of the corresponding exchange matrix. Given an exchange matrix $B \in \mathbb{Z}^{n\times m}$, the \textbf{mutation} of $B$ at $k \in [n]$, also denoted $\mu_k$, produces a new exchange matrix $\mu_k(B) = (b^\prime_{ij})$ with entries
\[
b^\prime_{ij} := \left\{\begin{array}{ccl}
-b_{ij} & : & \text{if $i=k$ or $j=k$} \\
b_{ij} + \frac{|b_{ik}|b_{kj}+ b_{ik}|b_{kj}|}{2} & : & \text{otherwise.}
\end{array}\right.
\]
\begin{flushleft}For example, the mutation of the ice quiver above (here $m=4$ and $n=3$) translates into the following matrix mutation. Note that mutation of matrices {(and of ice quivers)} is an involution (i.e. $\mu_k\mu_k(B) = B$).\end{flushleft}
\[
\begin{array}{c c c c c c c c c c}
B_{(Q,F)} & = & \left[\begin{array}{c c c | r}
0 & 2 & 0 & 0 \\
-2 & 0 & 1 & 0\\
0 & -1 & 0 & -1\\
\end{array}\right]
& \stackrel{\mu_2}{\longmapsto} &
\left[\begin{array}{c c c | r}
0 & -2 & 2 & 0 \\
2 & 0 & -1 & 0\\
-2 & 1 & 0 & -1\\
\end{array}\right] 
& = & B_{(\mu_2Q,F)}.
\end{array}
\]

{Given a quiver $Q$, we define its \textbf{framed} (resp. \textbf{coframed}) quiver to be the ice quiver $\widehat{Q}$ (resp. $\widecheck{Q}$) where $\widehat{Q}_0\ (= \widecheck{Q}_0) := Q_0 \sqcup [n+1, 2n]$, $F = [n+1, 2n]$, and $\widehat{Q}_1 := Q_1 \sqcup \{i \to n+i: i \in [n]\}$ (resp. $\widecheck{Q}_1 := Q_1 \sqcup \{n+i \to i: i \in [n]\}$).}  Now given $\widehat{Q}$ we define the \textbf{exchange tree} of $\widehat{Q}$, denoted $ET(\widehat{Q})$, to be the (a priori infinite) graph whose vertices are quivers obtained from $\widehat{Q}$ by a finite sequence of mutations and with two vertices connected by an edge if and only if the corresponding quivers are obtained from each other by a single mutation. Similarly, define the \textbf{exchange graph} of $\widehat{Q}$, denoted $EG(\widehat{Q})$, to be the quotient of $ET(\widehat{Q})$ where two vertices are identified if and only if there is a \textbf{frozen isomorphism} of the corresponding quivers (i.e. an isomorphism that fixes the frozen vertices). Such an isomorphism is equivalent to a simultaneous permutation of the rows and columns of the corresponding exchange matrices.

Given $\widehat{Q}$, we define the \textbf{c}-\textbf{matrix} $C(n) = C_R(n)$ (resp. $C = C_R$) of $R \in ET(\widehat{Q})$ (resp. $R \in EG(\widehat{Q})$)  to be the submatrix of $B_R$ where $C(n) := (b_{ij})_{i \in [n], j \in [n+1, 2n]}$ (resp. $C := (b_{ij})_{i \in [n], j \in [n+1,2n]}$). We let \textbf{c}-mat($Q$) $:= \{C_R: R \in EG(\widehat{Q})\}$. By definition, $B_R$ (resp. $C$) is only defined up to simultaneous permutations of its rows and first $n$ columns (resp. up to permutations of its rows) for any $R \in EG(\widehat{Q})$.

A row vector of a \textbf{c}-matrix, $\overrightarrow{c}$, is known as a \textbf{c}-\textbf{vector}. The celebrated theorem of Derksen, Weyman, and Zelevinsky \cite[Theorem 1.7]{dwz10}, known as {sign-coherence} of $\textbf{c}$-vectors, states that for any $R \in ET(\widehat{Q})$ and $i \in [n]$ the \textbf{c}-vector $\overrightarrow{c_i}$ is a nonzero element of $\mathbb{Z}_{\ge 0}^n$ or $\mathbb{Z}_{\le0}^n$. In the former case, we say a \textbf{c}-vector is  \textbf{positive}, and in the latter case, we say a \textbf{c}-vector is \textbf{negative}.

\subsection{Representations of quivers}

A \textbf{representation} $V = ((V_i)_{i \in Q_0}, (\varphi_a)_{a \in Q_1})$ of a quiver $Q$  is an assignment of a finite dimensional $\Bbbk$-vector space $V_i$ to each vertex $i$ and a $\Bbbk$-linear map $\varphi_a: V_{s(a)} \rightarrow V_{t(a)}$ to each arrow $a$ where $\Bbbk$ is a field.  The \textbf{dimension vector} of $V$ is the vector $\underline{\dim}(V):=(\dim V_i)_{i\in Q_0}$.  The \textbf{support} of $V$ is the set $\text{supp}(V) := \{i\in Q_0 : V_i \neq 0\}$.
Here is an example of a representation, with $\underline{\dim}(V) = (3,3,2)$, of the \textbf{mutable part} of the quiver depicted in Section \ref{subsec:quivers}.
\[
{\includegraphics[scale = .8]{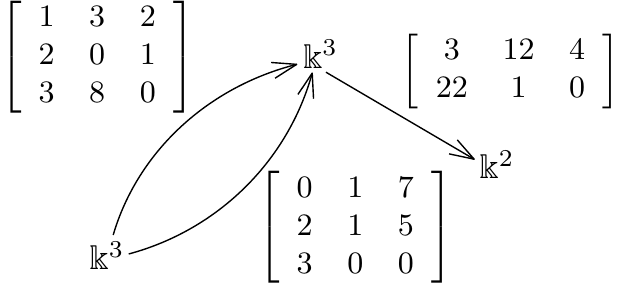}}
\]
Let $V = ((V_i)_{i \in Q_0}, (\varphi_a)_{a \in Q_1})$ and $W  = ((W_i)_{i \in Q_0}, (\varrho_a)_{a \in Q_1})$ be two representations of a quiver $Q$. A \textbf{morphism} $\theta : V \rightarrow W$ consists of a collection of linear maps $\theta_i : V_i \rightarrow W_i$ that are compatible with each of the linear maps in $V$ and $W$.  That is, for each arrow $a \in Q_1$, we have $\theta_{t(a)} \circ \varphi_a = \varrho_a \circ \theta_{s(a)}$.  An \textbf{isomorphism} of quiver representations is a morphism $\theta: V \to W$ where $\theta_i$ is a $\Bbbk$-vector space isomorphism for all $i \in Q_0$. We define $V \oplus W := ((V_i\oplus W_i)_{i \in Q_0}, (\varphi_a \oplus \varrho_a)_{a \in Q_1})$ to be the \textbf{direct sum} of $V$ and $W$. We say that a nonzero representation $V$ is \textbf{indecomposable} if it is not isomorphic to a direct sum of two nonzero representations. Note that the representations of a quiver $Q$ along with morphisms between them form an abelian category, denoted by $\text{rep}_{\Bbbk}(Q)$, with the indecomposable representations forming a full subcategory, denoted by $\text{ind}(\text{rep}_\Bbbk(Q))$.

We remark that representations of $Q$ can equivalently be regarded as modules over the \textbf{path algebra} $\Bbbk Q$. As such, one can define $\Ext_{\Bbbk Q}^s(V,W)$ for $s \ge 1$ and $\Hom_{\Bbbk Q}(V,W)$ for any representations $V$ and $W$, and $\Hom_{\Bbbk Q}(V,W)$ is isomorphic to the vector space of all morphisms $\theta:V\to W$.  We refer the reader to \cite{ass06} for more details on representations of quivers.

An \textbf{exceptional sequence} $\xi = (V_1,\ldots, V_k)$  ($k \le n:= \#Q_0$) is a sequence of \textbf{exceptional representations}\footnote{{In general, not all indecomposable representations are exceptional. For Dynkin quivers, it is well-known that a representation is exceptional if and only if it is indecomposable.} } $V_j$ of $Q$ (i.e. $V_j$ is indecomposable and $\Ext_{\Bbbk Q}^s(V_j,V_j) = 0$ for all $s\ge 1$) satisfying $\Hom_{\Bbbk Q}(V_j,V_i) = 0$ and $\Ext_{\Bbbk Q}^s(V_j,V_i) = 0$ if $i < j$ for all $s \ge 1$. We use the term \textbf{exceptional pair} to mean an exceptional sequence consisting of exactly two exceptional representations. We define an \textbf{exceptional collection} $\overline{\xi} = \{V_1,\ldots, V_k\}$ to be a \textit{set} of exceptional representations $V_j$ of $Q$ that can be ordered in such a way that they define an exceptional sequence. When $k = n$, we say $\xi$ (resp. $\overline{\xi}$) is a \textbf{complete exceptional sequence} (CES) (resp. \textbf{complete exceptional collection} (CEC)). 

The following result of Speyer and Thomas gives a beautiful connection between $\bc$-matrices of an acyclic quiver $Q$ and CESs.  It serves as motivation for our work. Before stating it we remark that for any $R \in ET(\widehat{Q})$ and any $i \in [n]$ where $Q$ is an acyclic quiver, the $\bc$-vector $\overrightarrow{c_i} = \overrightarrow{c_i}(R) = \pm \underline{\dim}(V_i)$ for some exceptional representation of $Q$ (see \cite{c12} or \cite{st13}). 

\begin{notation}
Let $\overrightarrow{c}$ be a \textbf{c}-vector of an acyclic quiver $Q$. Define 
$$\begin{array}{rcl}
|\overrightarrow{c}| & := & \left\{\begin{array}{rcl} \overrightarrow{c} & : & \text{if $\overrightarrow{c}$ is positive}\\ -\overrightarrow{c} & : & \text{if $\overrightarrow{c}$ is negative.} \end{array}\right.
\end{array}$$
\end{notation}

\begin{theorem}[\cite{st13}\label{st}]
Let $C \in \textbf{c}$-mat$(Q)$, let $\{\overrightarrow{c_i}\}_{i \in [n]}$ denote the \textbf{c}-vectors of $C$, and let $|\overrightarrow{c_i}| = \underline{\dim}(V_i)$ for some exceptional representation of $Q$. There exists a permutation $\sigma \in \mathfrak{S}_n$ such that  $(V_{\sigma(1)},...,V_{\sigma(n)})$ is a CES with the property that if there exist positive \textbf{c}-vectors in $C$, then there exists $k \in [n]$ such that $\overrightarrow{c_{\sigma(i)}}$ is positive if and only if $i \in [k,n]$, and $\Hom_{\Bbbk Q}(V_j,V_{j^\prime})=0$ for any $\overrightarrow{c_j}, \overrightarrow{c_{j^\prime}}$ that have the same sign.  Conversely, any set of $n$ vectors $\overrightarrow{c}_1,\ldots, \overrightarrow{c}_n$ having these properties defines a $\bc$-matrix whose row vectors are $\{\overrightarrow{c_i}\}_{i\in[n]}$.
\end{theorem}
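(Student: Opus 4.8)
The plan is to translate the combinatorial data on both sides into the homological language of torsion pairs and tilted hearts in the bounded derived category $D^b(\rep_\Bbbk Q)$, and to exploit the two inputs already available: sign-coherence of $\bc$-vectors and the identification $|\overrightarrow{c_i}| = \underline{\dim}(V_i)$ for exceptional $V_i$. The organizing dictionary is that each seed $R \in EG(\widehat{Q})$ determines a torsion pair $(\mathcal{T},\mathcal{F})$ in $\rep_\Bbbk Q$, that the $\bc$-vectors of $R$ are exactly the classes in $K_0(\rep_\Bbbk Q)$ of the simple objects of the heart obtained by tilting at $(\mathcal{T},\mathcal{F})$, and that the sign of a $\bc$-vector records the homological degree of the corresponding heart-simple: a positive $\bc$-vector comes from a degree-$0$ simple lying in the torsion class $\mathcal{T}$, while a negative $\bc$-vector comes from a degree-$(-1)$ simple $F[1]$ with $F$ in the torsion-free class $\mathcal{F}$, whose class is $-\underline{\dim} F$. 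Establishing this dictionary is the bulk of the work; granting it, both halves of the statement follow from standard properties of hereditary hearts.

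For the forward direction, fix $C = C_R$ and pass to $(\mathcal{T},\mathcal{F})$. Distinct simple objects of an abelian category admit no nonzero maps between them, so the degree-$0$ heart-simples (the positive $\bc$-vectors) are pairwise $\Hom$-orthogonal as modules, and likewise the objects $F[1]$ underlying the negative $\bc$-vectors; this yields $\Hom_{\Bbbk Q}(V_j,V_{j'}) = 0$ whenever $\overrightarrow{c_j}$ and $\overrightarrow{c_{j'}}$ share a sign. I would then choose $\sigma$ so that the negative (torsion-free) vectors occupy positions $1,\dots,k-1$ and the positive (torsion) vectors occupy positions $k,\dots,n$, in agreement with an exceptional ordering of the heart-simples in $D^b(\rep_\Bbbk Q)$. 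For $i<j$ of opposite sign one has $\Hom_{\Bbbk Q}(V_{\sigma(j)},V_{\sigma(i)}) = \Hom(\mathcal{T},\mathcal{F}) = 0$ by the defining property of the torsion pair, while all remaining $\Hom$- and $\Ext^s$-vanishing for $i<j$ is precisely the restriction to $\rep_\Bbbk Q$ of the derived exceptional sequence formed by the heart-simples, using that $\rep_\Bbbk Q$ is hereditary. This produces a CES with the asserted sign-block structure and threshold $k$.

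For the converse I would run the construction backward. Given $n$ vectors with the stated sign, block, and $\Hom$-orthogonality properties, the positive ones are dimension vectors of pairwise $\Hom$-orthogonal exceptional modules, which I would realize as the simple objects of a torsion class $\mathcal{T}$; dually the negative ones determine a torsion-free class $\mathcal{F}$, and the cross-vanishing built into the exceptional-sequence hypothesis guarantees that $(\mathcal{T},\mathcal{F})$ is an honest torsion pair. Tilting at it yields a heart whose simples realize the prescribed vectors as signed classes, and the correspondence between such tilted hearts (equivalently, cluster-tilting objects) and seeds of $\widehat{Q}$ returns a seed $R$ whose $\bc$-matrix $C_R$ has the given rows. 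Together with injectivity of the forward assignment, this reconstruction gives the claimed bijection.

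The main obstacle is the dictionary itself — proving that the $\bc$-vectors of a seed are exactly the signed classes of its heart-simples, with sign equal to homological degree — together with the surjectivity half of the converse, namely that every combinatorially admissible collection arises from a torsion pair that genuinely comes from a seed of $\widehat{Q}$, not merely from an abstract torsion pair. Both require the categorification of $\bc$-matrices by cluster-tilting objects and the compatibility of $\bc$-matrix mutation with mutation of the torsion pair, i.e.\ with the braid-group moves on exceptional sequences of Crawley-Boevey and Ringel; the sign and order bookkeeping that reconciles ``positives last'' with the homological degrees and the torsion-pair vanishing is the most delicate point. An alternative that avoids some of the $t$-structure machinery is to induct on the length of a mutation sequence from $\widehat{Q}$: the base case $C = I$ corresponds to the simple modules, and one shows that a single mutation $\mu_k$ performs exactly one elementary mutation of the associated exceptional sequence while flipping the sign of one $\bc$-vector and preserving the block and orthogonality conditions, so that the entire difficulty concentrates in this single-step compatibility.
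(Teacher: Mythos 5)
The first thing to say is that the paper contains no proof of this statement: Theorem~\ref{st} is quoted from Speyer and Thomas \cite{st13} and used as a black box, so there is no internal argument to compare yours against. Judged on its own terms, your outline follows what is essentially the standard categorical route, and it is close in spirit to how the result is actually proved in the literature: identify a seed with a torsion pair $(\mathcal{T},\mathcal{F})$ in $\rep_\Bbbk(Q)$, identify the $\bc$-vectors with the signed classes of the simple objects of the tilted heart, and read off the sign-block ordering from $\Hom_{\Bbbk Q}(\mathcal{T},\mathcal{F})=0$ and the same-sign orthogonality from $\Hom$-vanishing between distinct simples of an abelian category. The individual deductions you draw from the dictionary are correct; for instance the cross-sign $\Ext^1$-vanishing does follow from $\Ext^1_{\Bbbk Q}(T,F)=\Hom_{D^b}(T,F[1])$ being a $\Hom$ between distinct heart-simples.

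The genuine issue is that the dictionary \emph{is} the theorem. Three things are assumed rather than proved: (1) that the $\bc$-vectors of a seed are exactly the signed classes of the heart-simples, with sign recording homological degree; (2) that these heart-simples admit an exceptional ordering within each sign block --- the within-block $\Ext^1$-vanishing in one direction does not follow merely from their being distinct simples, but needs acyclicity of the $\Ext$-quiver of the tilted heart; and (3) the surjectivity half of the converse, namely that every combinatorially admissible collection arises from a torsion pair that comes from an actual seed of $\widehat{Q}$. You flag all three honestly, but together they carry essentially all of the content of \cite{st13}, so as written the proposal reduces the theorem to unproven inputs of comparable depth rather than proving it. The inductive alternative you sketch at the end (verifying that a single mutation $\mu_k$ flips one $\bc$-vector's sign while performing one braid move on the exceptional sequence and preserving the block and orthogonality conditions) is probably the most self-contained way to close the gap, but that one-step compatibility is itself a substantial case analysis, not a remark.
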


\subsection{Quivers of type $\mathbb{A}_n$}

For the purposes of this paper, we will only be concerned with quivers of type $\AAA_n$. We say a quiver $Q$ is of \textbf{type} $\mathbb{A}_n$ if the underlying graph of $Q$ is a Dynkin diagram of type $\mathbb{A}_n$. By convention, two vertices $i$ and $j$ with $i < j$ in a type $\mathbb{A}_n$ quiver $Q$ are connected by an arrow if and only if $j = i+1$ and $i \in [n-1]$.

It will be convenient to denote a given type $\mathbb{A}_n$ quiver $Q$ using the notation $Q_\epsilon$, which we now define. Let $\epsilon = (\epsilon_0,\epsilon_1,\ldots, \epsilon_{n}) \in \{+,-\}^{n+1}$ and for $i \in [n-1]$ define $a_{i}^{\epsilon_i} \in Q_1$ by
$$\begin{array}{cccccccccccc}
a_{i}^{\epsilon_i} & := & \left\{\begin{array}{rcl} i\leftarrow i+1 &: & \epsilon_i = -\\ i\rightarrow i+1 & : & \epsilon_i = +. \end{array}\right. 
\end{array}$$
Then $Q_\epsilon := ((Q_\epsilon)_0 := [n], (Q_\epsilon)_1 := \{a_i^{\epsilon_i}\}_{i \in [n-1]}) = Q.$ One observes that the values of $\epsilon_0$ and $\epsilon_{n}$ do not affect $Q_\epsilon.$
\begin{example}
Let $n = 5$ and $\epsilon = (-,+, -, +,-,+)$ so that $Q_{\epsilon} = 1 \stackrel{a_1^+}{\longrightarrow} 2 \stackrel{a_2^-}{\longleftarrow} 3 \stackrel{a_3^+}{\longrightarrow} 4 \stackrel{a_4^-}{\longleftarrow} 5.$ Below we show its framed quiver $\widehat{Q}_\epsilon$.
\[\widehat{Q}_\epsilon = \begin{xy} 0;<1pt,0pt>:<0pt,-1pt>:: 
(0,10) *+{1} ="0",
(30,10) *+{2} ="1",
(60,10) *+{3} ="2",
(90,10) *+{4} ="3",
(120,10) *+{5} ="4",
(0,-20) *+{\textcolor{blue}{\text{$6$}}} ="5",
(30,-20) *+{\textcolor{blue}{\text{$7$}}} ="6",
(60, -20) *+{\textcolor{blue}{\text{$8$}}} = "9",
(90,-20) *+{\textcolor{blue}{\text{$9$}}} ="7",
(120,-20) *+{\textcolor{blue}{\text{$10$}}} ="8",
"0", {\ar"1"},
"0", {\ar"5"},
"2", {\ar"1"},
"1", {\ar"6"},
"2", {\ar"3"},
"4", {\ar"3"},
"3", {\ar"7"},
"4", {\ar"8"},
"2", {\ar"9"},
\end{xy}\]
\end{example}

Let $Q_\epsilon$ be given where $\epsilon = (\epsilon_0,\epsilon_1,\ldots, \epsilon_{n}) \in \{+,-\}^{n+1}.$ Let $i, j \in [0,n] := \{0,1,\ldots, n\}$ where $i < j$ and let $X^\epsilon_{i,j} = ((V_\ell)_{\ell \in (Q_\epsilon)_0}, (\varphi^{i,j}_a)_{a \in (Q_\epsilon)_1}) \in \text{rep}_{\Bbbk}(Q_\epsilon)$ be the indecomposable representation defined by

$$\begin{array}{cccccccccccc}
V_\ell & := & \left\{\begin{array}{rcl} \Bbbk &: & i+1 \le \ell \le j\\ 0 & : & \text{otherwise} \end{array}\right. & & & & \varphi^{i,j}_{a} & := & \left\{\begin{array}{rcl} 1 & : & a = a^{\epsilon_k}_k \text{ where } i+1 \le k \le j-1\\ 0 & :  & \text{otherwise.} \end{array}\right. 
\end{array}$$

\noindent The objects of $\text{ind}(\text{rep}_{\Bbbk}(Q_\epsilon))$ are those of the form $X^\epsilon_{i,j}$ where $0 \le i < j \le n$, up to isomorphism.

\begin{remark}\label{rem:hom0}
If $X^\epsilon_{i,j}$ and $X^\epsilon_{k,\ell}$ are distinct indecomposables of $\text{rep}_\Bbbk(Q_\epsilon)$, then $\text{Hom}_{\Bbbk Q_\epsilon}(X^\epsilon_{i,j}, X^\epsilon_{k,\ell}) = 0$ or $\text{Hom}_{\Bbbk Q_\epsilon}(X^\epsilon_{k,\ell}, X^\epsilon_{i,j}) = 0.$ This follows from the well-known fact that the Auslander--Reiten quiver of $\Bbbk Q_\epsilon$ is acyclic.
\end{remark}

\section{Strand diagrams}

In this section, we define three different types of combinatorial objects: strand diagrams, labeled strand diagrams, and oriented strand diagrams. We will use these objects to classify exceptional collections, exceptional sequences, and \textbf{c}-matrices of a given type $\mathbb{A}_n$ quiver $Q_\epsilon$, so we fix such a quiver $Q_\epsilon$.

\subsection{Exceptional sequences and strand diagrams}\label{sec:strands}

Let $\mathcal{S}_{n,\epsilon} \subset \mathbb{R}^2$ be a collection of $n+1$ points arranged in a horizontal line. We identify these points with $\epsilon_0, \epsilon_1, \ldots, \epsilon_{n}$ where  $\epsilon_j$ appears to the right of $\epsilon_i$ for any $i,j \in [0,n] := \{0,1,2, \ldots, n\}$ where $i < j$. Using this identification, we can write $\epsilon_i = (x_i,y_i)\in \mathbb{R}^2$.

\begin{definition}
{Let $i,j \in [0,n]$ where $i\neq j$. A \textbf{strand} $c(i,j)$ on $\mathcal{S}_{n,\epsilon}$ is an isotopy class of simple curves in $\mathbb{R}^2$ where any $\gamma \in c(i,j)$ satisfies:} 

\begin{itemize}
\item[$a)$] the endpoints of {$\gamma$} are $\epsilon_i$ and $\epsilon_j$,
\item[$b)$] as a subset of $\mathbb{R}^2$, ${\gamma} \subset \{(x,y) \in \mathbb{R}^2: x_i \le x \le x_j \}\backslash\{\epsilon_{i+1}, \epsilon_{i+2}, \ldots, \epsilon_{j-1}\}$,
\item[$c)$] if $k \in [0,n]$ satisfies $i \le k \le j$ and $\epsilon_k = +$ (resp. $\epsilon_k = -$), then {$\gamma$} is locally below (resp. locally above) $\epsilon_k$. By locally below (resp. locally above) $\epsilon_k$, we mean that for a given parameterization of $\gamma = (\gamma^{(1)}, \gamma^{(2)}) :[0,1]\to \mathbb{R}^2$ there exists $\delta > 0$ such that $\gamma$ satisfies $\gamma^{(2)}(t) \le y_k$ (resp. $\gamma^{(2)}(t) \ge y_k$) for all $t \in [0,1]$ where $\gamma(t) \in \{p \in \mathbb{R}^2: \ \text{dist}(p, \epsilon_{k}) < \delta\}$. 
\end{itemize}

\noindent There is a natural bijection  $\Phi_{\epsilon}$ from the objects of $\text{ind}(\text{rep}_\Bbbk(Q_\epsilon))$ to the set of strands on $\mathcal{S}_{n,\epsilon}$ given by $\Phi_{\epsilon}(X^\epsilon_{i,j}) := c(i,j).$ 
\end{definition}

\begin{remark}
It is clear that any strand $c(i,j)$ can be represented by a \textbf{monotone curve} $\gamma \in c(i, j).$ That is, there exists a curve $\gamma \in c(i,j)$ with a parameterization $\gamma = (\gamma^{(1)}, \gamma^{(2)}): [0,1] \to \mathbb{R}^2$ such that if $t, s \in [0,1]$ and $t < s$, then $\gamma^{(1)}(t) < \gamma^{(1)}(s)$. 
\end{remark}

We say that two strands $c(i_1, j_1)$ and $c(i_2,j_2)$ \textbf{intersect nontrivially} if any two curves $\gamma_\ell \in c(i_\ell, j_\ell)$ with $\ell \in \{1,2\}$ have at least one transversal crossing. Otherwise, we say that $c(i_1, j_1)$ and $c(i_2,j_2)$ \textbf{do not intersect nontrivially}. For example, $c(1,3),c(2,4)$ intersect nontrivially if and only if $\epsilon_2=\epsilon_3$. Additionally, we say that $c(i_2,j_2)$ is \textbf{clockwise} from $c(i_1,j_1)$ (or, equivalently, $c(i_1,j_1)$ is \textbf{counterclockwise} from $c(i_2,j_2)$) if and only if some ${\gamma_1} \in c(i_1,j_1)$ and ${\gamma_2} \in c(i_2,j_2)$ share an endpoint $\epsilon_k$ and locally appear in one of the following six configurations up to isotopy. It follows from Lemma~\ref{nocommonendpt} below that if a strand is clockwise or counterclockwise from another strand, then the two do not intersect nontrivially.
$$\includegraphics[scale=1.5]{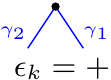} \ \ \ \ \ \ \ \ \ \includegraphics[scale=1.5]{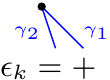} \ \ \ \ \ \ \ \ \ \includegraphics[scale=1.5]{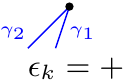} \ \ \ \ \ \ \ \ \  \includegraphics[scale=1.5]{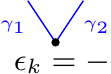} \ \ \ \ \ \ \ \ \ \includegraphics[scale=1.5]{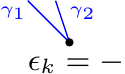} \ \ \ \ \ \ \ \ \ \includegraphics[scale=1.5]{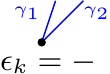}.$$


A given collection of strands $d = \{c(i_\ell, j_\ell)\}_{\ell \in [k]}$ with $k \le n$, naturally defines a graph with vertex set $\{\epsilon_0, \ldots, \epsilon_n\}$ and edge set $\{\{\epsilon_s, \epsilon_t\}: \ c(s, t) \in d\}$. We refer to this graph as the \textbf{graph determined by} $d$.

\begin{definition}
A \textbf{strand diagram} $d = \{c(i_\ell, j_\ell)\}_{\ell \in [k]}$ ($k \le n$) on $\mathcal{S}_{n,\epsilon}$ is a collection of strands on $\mathcal{S}_{n,\epsilon}$ that satisfies the following conditions:

$\begin{array}{rll}
a) & \text{distinct strands do not intersect nontrivially, and} \\
b) & \text{the graph determined by $d$ is a \textbf{forest} (i.e. a disjoint union of trees)} \\
\end{array}$

\noindent Let $\mathcal{D}_{k,\epsilon}$ denote the set of strand diagrams on $\mathcal{S}_{n,\epsilon}$ with $k$ strands and let $\mathcal{D}_\epsilon$ denote the set of strand diagrams with any positive number of strands. Then $$\mathcal{D}_\epsilon = \bigsqcup_{k \in [n]} \mathcal{D}_{k,\epsilon}.$$
\end{definition}

\begin{example}\label{firstexample}
Let $n = 4$ and $\epsilon = (-,+, -, +,+)$ so that $Q_{\epsilon} = 1 \stackrel{a_1^+}{\longrightarrow} 2 \stackrel{a_2^-}{\longleftarrow} 3 \stackrel{a_3^+}{\longrightarrow} 4.$ Then we have that $d_1 = \{c(0,1), c(0,2), c(2,3), c(2,4)\} \in \mathcal{D}_{4,\epsilon}$ and $d_2 = \{c(0,4), c(1,3), c(2,4)\}\in \mathcal{D}_{3,\epsilon}$. We draw these strand diagrams below.
$$\begin{array}{ccccccc}
\includegraphics[scale=1]{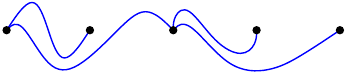} & & & & \includegraphics[scale=1]{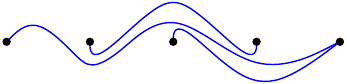}
\end{array}$$
\end{example}

The following technical lemma classifies when two distinct indecomposable representations of $Q_\epsilon$ define 0, 1, or 2 exceptional pairs. Its proof appears in Section~\ref{firstproofs}.

\begin{lemma}\label{maintechlemma}
Let $Q_\epsilon$ be given. Fix two distinct indecomposable representations $U, V \in \text{ind}(\text{rep}_\Bbbk(Q_\epsilon))$.

$\begin{array}{rll}
a) & \text{The strands $\Phi_{\epsilon}(U)$ and $\Phi_{\epsilon}(V)$ intersect nontrivially if and only if neither $(U,V)$ nor $(V,U)$ are}\\
& \text{exceptional pairs.}\\
b) & \text{The strand $\Phi_{\epsilon}(U)$ is clockwise from $\Phi_{\epsilon}(V)$  if and only if $(U,V)$ is an exceptional pair and $(V,U)$}\\
& \text{is not an exceptional pair.}\\
c) & \text{The strands $\Phi_{\epsilon}(U)$ and $\Phi_{\epsilon}(V)$ do not intersect at any of their endpoints and they do not intersect}\\
& \text{nontrivially if and only if $(U, V)$ and $(V, U)$ are both exceptional pairs.}
\end{array}$
\end{lemma}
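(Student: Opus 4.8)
The plan is to reduce everything to explicit computations of $\Hom$ and $\Ext^1$ between the interval representations $X^\epsilon_{i,j}$, and then match these computations against the three geometric configurations of strands. Since $Q_\epsilon$ is a type $\mathbb{A}_n$ quiver, its path algebra is hereditary, so $\Ext^s = 0$ for all $s \ge 2$; thus an exceptional pair condition only involves $\Hom$ and $\Ext^1$. Moreover every indecomposable is a brick (its endomorphism ring is $\Bbbk$), so each $X^\epsilon_{i,j}$ is automatically exceptional and the content of ``$(U,V)$ is an exceptional pair'' is precisely $\Hom(V,U) = \Ext^1(V,U) = 0$. My first step is therefore to record, for a pair of intervals $U = X^\epsilon_{i,j}$ and $V = X^\epsilon_{k,\ell}$, exactly when $\Hom(U,V)$ and $\Ext^1(U,V)$ vanish, in terms of how the intervals $[i+1,j]$ and $[k+1,\ell]$ overlap and the signs $\epsilon$ at the shared boundary vertices.

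The key combinatorial input is that for type $\mathbb{A}$ string modules, $\Hom(X^\epsilon_{i,j}, X^\epsilon_{k,\ell}) \ne 0$ precisely when there is a ``graph map'' realizing $V$ as a quotient of a submodule of $U$ compatible with the orientation, and $\Ext^1(X^\epsilon_{i,j}, X^\epsilon_{k,\ell}) \ne 0$ precisely when the two intervals can be glued into a longer string via the Auslander--Reiten translate / a nonsplit extension. Concretely I would show that nonvanishing of one of these four spaces happens exactly when the two intervals share an endpoint index among $\{i,j\} \cap \{k,\ell\}$ with a compatible sign condition, or when they overlap in the ``nested/crossing'' pattern forced by equal signs $\epsilon_k = \epsilon_{k+1}$ at a crossing point. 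This is exactly the condition detected geometrically: recall the excerpt already notes that $c(1,3)$ and $c(2,4)$ intersect nontrivially iff $\epsilon_2 = \epsilon_3$. So I would organize the argument around the three possibilities for the two strands: (i) they cross transversally, (ii) they share an endpoint $\epsilon_m$, or (iii) they are disjoint and share no endpoint; and then compute the homological data in each case.

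With those computations in hand, the proof of the three parts is a bookkeeping argument, using the guiding principle that the \emph{total} number of vanishing directions among $\{\Hom(U,V), \Ext^1(U,V), \Hom(V,U), \Ext^1(V,U)\}$ determines whether we get $0$, $1$, or $2$ exceptional pairs. For part $(a)$, a nontrivial crossing means the intervals overlap with equal sign at the crossing vertex, and I would show this forces a nonzero map in one direction \emph{and} a nonzero extension in the other (so neither $(U,V)$ nor $(V,U)$ is exceptional); the converse uses Remark~\ref{rem:hom0} to rule out both $\Hom$'s vanishing simultaneously when the strands must cross. For part $(c)$, disjoint strands not meeting at endpoints correspond to intervals that are either completely disjoint or properly nested with opposite sign behavior, forcing all four spaces to vanish, hence both orderings exceptional. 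Part $(b)$, the shared-endpoint case, is then the intermediate situation: sharing an endpoint $\epsilon_m$ with the clockwise orientation picks out exactly one ordering as exceptional, which I would verify by checking that the clockwise configuration corresponds to a nonzero $\Hom$ or $\Ext^1$ in precisely one direction. Here the six pictured clockwise configurations must each be matched to a sign pattern at $\epsilon_m$, and I would appeal to the promised Lemma~\ref{nocommonendpt} to guarantee that a clockwise/counterclockwise pair never crosses nontrivially.

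The main obstacle I anticipate is the careful sign analysis at shared or crossing endpoints: the definition of a strand encodes the orientation data of $Q_\epsilon$ through the ``locally above/below'' conditions governed by $\epsilon_k = \pm$, and translating these local topological conditions into the precise inequalities on the intervals $[i+1,j]$, $[k+1,\ell]$ that make $\Hom$ or $\Ext^1$ nonzero is where the real work lies. In particular, the clockwise case of part $(b)$ requires matching each of the six local pictures to a distinct sign scenario and confirming it yields exactly one exceptional ordering and not two, which is the most delicate correspondence. I expect the cleanest route is to first establish the dictionary ``crossing $\Leftrightarrow$ equal sign at the overlap; shared endpoint with clockwise orientation $\Leftrightarrow$ exactly one of $\Hom/\Ext^1$ nonzero in a single direction,'' and then let parts $(a)$, $(b)$, $(c)$ fall out as the three mutually exclusive and exhaustive geometric cases, using that exactly one of them must hold for any two distinct strands.
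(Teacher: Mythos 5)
Your proposal follows essentially the same route as the paper: the paper first proves exactly the vanishing criteria you describe (Lemmas \ref{interlaced}, \ref{nested}, \ref{shareendpt}, and \ref{separate}, organized by whether the index intervals are interlaced, nested, share an endpoint, or are disjoint), adds the geometric Lemmas \ref{uniqcross} and \ref{nocommonendpt}, and then runs the same three-way case analysis, including the use of Remark \ref{rem:hom0} in the converse direction of part $a)$. One correction to your proposed dictionary: ``crossing $\Leftrightarrow$ equal sign at the overlap'' holds only for interlaced intervals (for $i<k<j<\ell$ the strands cross iff $\epsilon_k=\epsilon_j$), whereas nested intervals ($i<k<\ell<j$) cross iff $\epsilon_k\neq\epsilon_\ell$, and consequently your case (iii) must also admit interlaced intervals with $\epsilon_k\neq\epsilon_j$ rather than only disjoint or nested ones --- but these are precisely the sign computations that carrying out your first step (and the paper's four lemmas) would supply.
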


Using Lemma~\ref{maintechlemma} we obtain our first main result. The following theorem says that the data of an exceptional collection is completely encoded in the strand diagram it defines.

\begin{theorem}\label{ECbij}
Let $\overline{\mathcal{E}}_\epsilon := \{\text{exceptional collections of } Q_\epsilon\}$. There is a bijection $\overline{\mathcal{E}}_\epsilon \to \mathcal{D}_{\epsilon}$ defined by $$\overline{\xi}_\epsilon = \{X^\epsilon_{i_\ell,j_\ell}\}_{\ell \in [k]} \mapsto \{\Phi_{\epsilon}(X^\epsilon_{i_\ell,j_\ell})\}_{\ell \in [k]}.$$
\end{theorem}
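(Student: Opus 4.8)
The plan is to establish the bijection by unpacking both sides through Lemma~\ref{maintechlemma}, which already translates the pairwise homological relations into pairwise geometric relations among strands. Since $\Phi_\epsilon$ is a bijection between objects of $\text{ind}(\text{rep}_\Bbbk(Q_\epsilon))$ and strands on $\mathcal{S}_{n,\epsilon}$, the proposed map on collections is automatically injective; the real content is that it carries exceptional collections exactly onto strand diagrams. So I would split the argument into two inclusions: first that the image of an exceptional collection is a strand diagram, and second that every strand diagram arises this way (i.e.\ the set of representations underlying any strand diagram admits an ordering making it an exceptional sequence).

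For the forward direction, let $\overline{\xi}_\epsilon = \{X^\epsilon_{i_\ell,j_\ell}\}_{\ell\in[k]}$ be an exceptional collection, so some ordering of these representations is an exceptional sequence. I first check condition $a)$ of the definition of a strand diagram: for any two distinct representations $U,V$ in the collection, one of $(U,V)$, $(V,U)$ is an exceptional pair (because any two terms of an exceptional sequence form an exceptional pair in the order they appear), so by Lemma~\ref{maintechlemma}$a)$ (contrapositive) the corresponding strands do not intersect nontrivially. For condition $b)$, I must show the graph determined by the image is a forest. The key numerical input is that an exceptional collection has at most $n$ strands on $n+1$ vertices, so it suffices to rule out cycles. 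I would argue that a cycle in the graph determined by $d$ would force, via Lemma~\ref{maintechlemma} parts $b)$ and $c)$, a cyclic chain of clockwise/non-crossing relations among strands sharing endpoints around the cycle, which is incompatible with the acyclicity of the Auslander--Reiten quiver recorded in Remark~\ref{rem:hom0} and with the partial order implicit in the exceptional-sequence structure; concretely, going around the cycle and tracking the clockwise orientation at each shared endpoint produces a directed cycle of $\Hom$-nonvanishing, contradicting that the terms can be linearly ordered into an exceptional sequence.

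For the reverse direction, let $d=\{c(i_\ell,j_\ell)\}_{\ell\in[k]}$ be a strand diagram and set $V_\ell := \Phi_\epsilon^{-1}(c(i_\ell,j_\ell))$. I need an ordering of the $V_\ell$ that is an exceptional sequence. By strand-diagram condition $a)$, no two of these strands intersect nontrivially, so by Lemma~\ref{maintechlemma}$a)$ every pair $\{V_\ell,V_{\ell'}\}$ yields at least one exceptional pair in some order. Define a binary relation by declaring $V_{\ell'} \prec V_\ell$ when $(V_\ell,V_{\ell'})$ is an exceptional pair but $(V_{\ell'},V_\ell)$ is not (equivalently, by Lemma~\ref{maintechlemma}$b)$, when $\Phi_\epsilon(V_\ell)$ is clockwise from $\Phi_\epsilon(V_{\ell'})$); pairs that are exceptional in both orders (Lemma~\ref{maintechlemma}$c)$) are left incomparable. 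I would verify that this relation extends to a partial order, using the forest condition to guarantee there are no directed cycles: any such cycle would again close up into a cycle in the graph determined by $d$, contradicting condition $b)$. A linear extension of this partial order then orders the $V_\ell$ so that whenever $V_{\ell'}$ precedes $V_\ell$ we have $\Hom_{\Bbbk Q_\epsilon}(V_\ell, V_{\ell'}) = \Ext^s_{\Bbbk Q_\epsilon}(V_\ell, V_{\ell'}) = 0$, which is exactly the exceptional-sequence condition; hence the collection is exceptional and maps to $d$.

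I expect the main obstacle to be the forest/acyclicity interplay used in both directions, namely the careful bookkeeping that converts a cycle in the graph determined by $d$ into a contradiction with the homological data. The subtlety is that a cycle in the underlying graph involves strands sharing endpoints, and one must verify that the local clockwise configurations at successive shared vertices compose consistently all the way around the cycle to yield a genuine cyclic chain of nonzero $\Hom$'s; handling the six possible local configurations from the definition of clockwise, and confirming that the non-crossing hypothesis forces each consecutive pair into a clockwise (rather than mutually-exceptional) relationship around a minimal cycle, is where the work concentrates. Everything else follows formally from Lemma~\ref{maintechlemma} together with the observation that $\Phi_\epsilon$ is a bijection on the level of individual strands, so the equivalence of ``exceptional collection'' and ``strand diagram'' reduces cleanly to ``linearly orderable with vanishing backward $\Hom/\Ext$'' versus ``pairwise non-crossing and acyclic.''
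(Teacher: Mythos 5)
Your overall strategy coincides with the paper's: Lemma~\ref{maintechlemma}a) gives the noncrossing condition, a cycle in the graph determined by the image is ruled out by exhibiting an inconsistency among the clockwise relations, and surjectivity is obtained by linearly ordering the representations underlying a strand diagram according to the clockwise relation (the paper does this by repeatedly extracting a strand that forms an exceptional pair in first position with everything remaining, rather than by naming a poset and taking a linear extension, but these are the same argument, with the forest condition guaranteeing in both cases that the process does not get stuck). The step you flag as the main obstacle, namely that the clockwise relations around a cycle of pairwise noncrossing strands cannot all be compatible with a single linear order, is asserted in the paper's proof with essentially no more justification than you give, so I do not count that deferral against you.

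There is, however, one step that fails as written: the orientation of your partial order in the surjectivity argument is backwards. You declare $V_{\ell'} \prec V_\ell$ when $(V_\ell, V_{\ell'})$ is an exceptional pair and $(V_{\ell'}, V_\ell)$ is not, and then list the elements of a linear extension with smaller elements first. But if $V_{\ell'}$ precedes $V_\ell$ in the resulting sequence, the exceptional-sequence condition requires precisely that $(V_{\ell'}, V_\ell)$ be an exceptional pair, i.e. $\Hom_{\Bbbk Q_\epsilon}(V_\ell,V_{\ell'}) = \Ext^1_{\Bbbk Q_\epsilon}(V_\ell,V_{\ell'}) = 0$; by your definition of $\prec$ this pair is \emph{not} exceptional whenever $V_{\ell'} \prec V_\ell$ strictly, so the vanishing you claim at the end of that paragraph is the negation of what your order guarantees, and your construction outputs the reverse of an exceptional sequence. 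The fix is immediate (reverse $\prec$, or read the linear extension from largest to smallest, which is what the paper's labeling convention $\ell \mapsto k+1-\ell$ in Theorem~\ref{ESbij} amounts to), but as stated the step is wrong. A smaller quibble: your phrase ``directed cycle of $\Hom$-nonvanishing'' should be $\Hom$- or $\Ext^1$-nonvanishing, since a clockwise pair can fail to be exceptional in the reverse order because of $\Ext^1$ alone (see Lemma~\ref{shareendpt} ii), iii)); this does not affect the contradiction, which only needs that the reversed pair is not exceptional, and the appeal to Remark~\ref{rem:hom0} is not actually needed for it.
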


\begin{proof}
Let $\overline{\xi}_\epsilon = \{X^\epsilon_{i_\ell,j_\ell}\}_{\ell \in [k]}$ be an exceptional collection of $Q_\epsilon$.  Let $\xi_\epsilon$ be an exceptional sequence gotten from $\overline{\xi}_\epsilon$ by reordering its representations.  Without loss of generality, assume $\xi_\epsilon = (X^\epsilon_{i_\ell,j_\ell})_{\ell \in [k]}$ is an exceptional sequence.  Thus, $(X^\epsilon_{i_\ell,j_\ell},X^\epsilon_{i_p,j_p})$ is an exceptional pair for all $\ell < p$.  Lemma~\ref{maintechlemma} a) implies that distinct strands of $\{\Phi_{\epsilon}(X^\epsilon_{i_\ell,j_\ell})\}_{\ell \in [k]}$ do not intersect nontrivially.

Now we will show that $\{\Phi_{\epsilon}(X^\epsilon_{i_\ell,j_\ell})\}_{\ell \in [k]}$ has no cycles.  Suppose that $\Phi_{\epsilon}(X^\epsilon_{i_{\ell_1},j_{\ell_1}}),\ldots,\Phi_{\epsilon}(X^\epsilon_{i_{\ell_p},j_{\ell_p}})$ is a cycle of length $p \leq k$ in $\Phi_{\epsilon}(\xi_\epsilon)$. Then, there exist $\ell_a, \ell_b \in [k]$ with $\ell_b > \ell_a$ such that $\Phi_{\epsilon}(X^\epsilon_{i_{\ell_b},j_{\ell_b}})$ is clockwise from $\Phi_{\epsilon}(X^\epsilon_{i_{\ell_a},j_{\ell_a}})$. Thus, by Lemma~\ref{maintechlemma} b), $(X^\epsilon_{i_{\ell_a},j_{\ell_a}},X^\epsilon_{i_{\ell_b},j_{\ell_b}})$ is not an exceptional pair.  This contradicts the fact that $(X^\epsilon_{i_{\ell_1},j_{\ell_1}},\ldots,X^\epsilon_{i_{\ell_p},j_{\ell_p}})$ is an exceptional sequence.  Hence, the graph determined by $\{\Phi_{\epsilon}(X^\epsilon_{i_\ell,j_\ell})\}_{\ell \in [k]}$ is a tree.  We have shown that $\Phi_{\epsilon}(\overline{\xi}_\epsilon) \in \mathcal{D}_{k,\epsilon}$.

Now let $d = \{c(i_\ell, j_\ell)\}_{\ell \in [k]} \in \mathcal{D}_{k,\epsilon}$.  Since $c(i_\ell,j_\ell)$ and $c(i_m,j_m)$ do not intersect nontrivially, it follows that $(\Phi_{\epsilon}^{-1}(c(i_\ell,j_\ell)),\Phi_{\epsilon}^{-1}(c(i_m,j_m)))$ or $(\Phi_{\epsilon}^{-1}(c(i_m,j_m)),\Phi_{\epsilon}^{-1}(c(i_\ell,j_\ell)))$ is an exceptional pair for every $\ell \neq m$.  Notice that there exists $c(i_{\ell_1},j_{\ell_1}) \in d$ such that $(\Phi_{\epsilon}^{-1}(c(i_{\ell_1},j_{\ell_1})),\Phi_{\epsilon}^{-1}(c(i_\ell,j_\ell)))$ is an exceptional pair for every $c(i_\ell,j_\ell) \in d \setminus \{c(i_{\ell_1},j_{\ell_1})\}$.  This is true because if such $c(i_{\ell_1},j_{\ell_1})$ did not exist, then $d$ must have a cycle.  Set $E_1 = \Phi_{\epsilon}^{-1}(c(i_{\ell_1},j_{\ell_1}))$.  Now, choose $c(i_{\ell_p},j_{\ell_p})$ such that $(\Phi_{\epsilon}^{-1}(c(i_{\ell_p},j_{\ell_p})),\Phi_{\epsilon}^{-1}(c(i_\ell,j_\ell)))$ is an exceptional pair for every $c(i_\ell,j_\ell) \in d \setminus \{c(i_{\ell_1},j_{\ell_1}),\ldots,c(i_{\ell_p},j_{\ell_p})\}$ inductively and put $E_p = \Phi_{\epsilon}^{-1}(c(i_{\ell_p},j_{\ell_p}))$.  By construction, $(E_1,\ldots,E_k)$ is a complete exceptional sequence, as desired.
\end{proof}

Our next step is to add distinct integer labels to each strand in a given strand diagram $d$. When these labels have what we call a \textbf{good} labeling, these labels will describe exactly the order in which to put the representations corresponding to strands of $d$ so that the resulting sequence of representations is an exceptional sequence.

\begin{definition}\label{Def:labeled_diag}
A \textbf{labeled diagram} $d(k) = \{(c(i_\ell, j_\ell), s_\ell)\}_{\ell \in [k]}$ on $\mathcal{S}_{n,\epsilon}$ is a set of pairs $(c(i_\ell, j_\ell), s_\ell)$ where $c(i_\ell, j_\ell)$ is a strand on $\mathcal{S}_{n,\epsilon}$ and $s_\ell \in [k]$ such that $d := \{c(i_\ell, j_\ell)\}_{\ell \in [k]}$ is a strand diagram on $\mathcal{S}_{n,\epsilon}$ and $s_\ell \neq s_{\ell^\prime}$ for any distinct $\ell, \ell^\prime \in [k]$. We refer to the pairs $(c(i_\ell, j_\ell), s_\ell)$ as \textbf{labeled strands} and to $d$ as the \textbf{underlying diagram} of $d(k)$. We define the \textbf{endpoints} of a labeled strand $(c(i_\ell, j_\ell), s_\ell)$ to be the endpoints of $c(i_\ell, j_\ell)$. 

Let $\epsilon_i \in \mathcal{S}_{n,\epsilon}$ and let $((c(i,j_1),s_1),\ldots, (c(i,j_r),s_r))$ be the sequence of all labeled strands of $d(k)$ that have $\epsilon_i$ as an endpoint, and assume that this sequence is ordered so that strand $c(i,j_k)$ is clockwise from $c(i,j_{k^\prime})$ if $k^\prime < k$. We say the strand labeling of $d(k)$ is \textbf{good} if for each point $\epsilon_i \in \mathcal{S}_{n,\epsilon}$ one has $s_1 < \cdots < s_r.$ Let $\mathcal{D}_{k,\epsilon}(k)$ denote the set of labeled strand diagrams on $\mathcal{S}_{n,\epsilon}$ with $k$ strands and with good strand labelings.
\end{definition}

\begin{example}\label{secondexample} 
Let $n = 4$ and $\epsilon = (-,+, -, +,+)$ so that $Q_{\epsilon} = 1 \stackrel{a_1^+}{\longrightarrow} 2 \stackrel{a_2^-}{\longleftarrow} 3 \stackrel{a_3^+}{\longrightarrow} 4.$ Below we show the labeled diagrams $d_1(4) = \{(c(0,1),1), (c(0,2), 2), (c(2,3), 3), (c(2,4), 4)\}$ and $d_2(3) = \{(c(0,4), 1), (c(2,4), 2), (c(1,3), 3)\}.$

$$\begin{array}{ccccccc}
\includegraphics[scale=1]{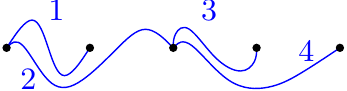} & & & & \includegraphics[scale=1]{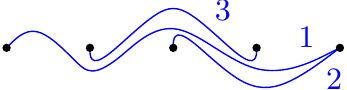}
\end{array}$$

\noindent We have that $d_1(4) \in \mathcal{D}_{4,\epsilon}(4)$, but $d_2(3) \not \in \mathcal{D}_{3,\epsilon}(3).$
\end{example}

\begin{theorem}\label{ESbij}
Let $k \in [n]$ and let $\mathcal{E}_\epsilon(k) := \{\text{exceptional sequences of } Q_\epsilon\text{ of length }k\}.$ There is a bijection $\widetilde{\Phi}_\epsilon: \mathcal{E}_\epsilon(k) \to \mathcal{D}_{k,\epsilon}(k)$ defined by 
$$\xi_\epsilon = (X^\epsilon_{i_\ell,j_\ell})_{\ell \in [k]} \longmapsto \{(c(i_\ell,j_\ell), k+1-\ell)\}_{\ell \in [k]}.$$
\end{theorem}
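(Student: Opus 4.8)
The plan is to show that the proposed map $\widetilde{\Phi}_\epsilon$ is a well-defined bijection by exploiting the structure already established in Theorem~\ref{ECbij}, which gives a bijection between exceptional collections and (unlabeled) strand diagrams. The key observation is that $\widetilde{\Phi}_\epsilon$ factors nicely: given an exceptional sequence $\xi_\epsilon = (X^\epsilon_{i_\ell,j_\ell})_{\ell \in [k]}$, its underlying collection maps via Theorem~\ref{ECbij} to the strand diagram $d = \{c(i_\ell,j_\ell)\}_{\ell \in [k]}$, and the sequence order is recorded by the labels $s_\ell = k+1-\ell$. So the content of the theorem is precisely that (i) these labels always form a \emph{good} labeling of $d$, so the map lands in $\mathcal{D}_{k,\epsilon}(k)$, and (ii) conversely every good labeling of every strand diagram arises from a unique exceptional sequence. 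Since the underlying-collection correspondence is already a bijection, I only need to check that the labeling data matches up bijectively.

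First I would prove \textbf{well-definedness}: that the labeling $s_\ell = k+1-\ell$ is good. Fix a point $\epsilon_i$ and consider the strands $c(i,j_1),\dots,c(i,j_r)$ incident to it, ordered so that $c(i,j_a)$ is clockwise from $c(i,j_b)$ when $b<a$. By Lemma~\ref{maintechlemma}b), $c(i,j_a)$ being clockwise from $c(i,j_b)$ means that writing $U = \Phi_\epsilon^{-1}(c(i,j_b))$ and $V = \Phi_\epsilon^{-1}(c(i,j_a))$, the pair $(U,V)$ is exceptional but $(V,U)$ is not. In an exceptional sequence, if $(V,U)$ fails to be exceptional then $U$ must precede $V$ (otherwise reading them in sequence order would give the non-exceptional pair). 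Translating back through $s_\ell = k+1-\ell$: the representation appearing \emph{earlier} in the sequence gets the \emph{larger} label. So $U$ earlier means $U$ has the larger $s$-value; but the good condition demands $s_1 < \cdots < s_r$ reading in the clockwise-increasing order, i.e. the clockwise-most strand should have the \emph{smallest} label. I would verify the sign conventions carefully here so that ``clockwise $\Leftrightarrow$ must come first $\Leftrightarrow$ larger label'' aligns with the definition's requirement $s_1 < \cdots < s_r$; this bookkeeping is where sign errors are likely, so it deserves explicit care.

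Next I would establish the \textbf{bijection}. Injectivity is immediate: the labels recover the sequence order uniquely via $\ell = k+1 - s_\ell$, and Theorem~\ref{ECbij} guarantees the underlying strands determine the representations. For surjectivity, take any good labeled diagram $d(k) \in \mathcal{D}_{k,\epsilon}(k)$ with underlying diagram $d$. By Theorem~\ref{ECbij}, $d$ corresponds to an exceptional \emph{collection} $\overline{\xi}_\epsilon$; I must show that ordering these representations by decreasing label (so the strand labeled $k$ comes first, down to the strand labeled $1$) yields an exceptional \emph{sequence}, and that this sequence maps back to $d(k)$. The last part is automatic from the label formula, so the crux is checking the ordering is exceptional. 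I would argue that for any two strands sharing no endpoint, Lemma~\ref{maintechlemma}c) makes \emph{both} orderings exceptional, so their relative order is irrelevant; and for two strands sharing an endpoint $\epsilon_i$, the goodness condition forces the label order to agree with the clockwise order, which by Lemma~\ref{maintechlemma}b) is exactly the order making the pair exceptional. Thus every consecutive (hence every) pair in the decreasing-label order is an exceptional pair.

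The main obstacle I anticipate is the global step in surjectivity: knowing that \emph{each pair} sharing an endpoint is correctly ordered does not immediately guarantee the \emph{whole} linear order is a valid exceptional sequence, because exceptionality of a sequence is a condition on all pairs $(V_j, V_i)$ with $i<j$, including pairs of strands that do \emph{not} share an endpoint and that might be oriented ``the wrong way'' relative to the chosen total order. I would resolve this by invoking Lemma~\ref{maintechlemma}c): non-adjacent (endpoint-disjoint) strands give exceptional pairs in \emph{either} order, so they impose no constraint, and the only genuine constraints come from endpoint-sharing strands, which the good labeling satisfies by construction. Making this ``local goodness implies global exceptionality'' argument airtight — essentially showing that the good labeling is a linear extension of the partial order generated by the clockwise relations at shared endpoints, and that this partial order captures \emph{all} the constraints needed for an exceptional sequence — is the heart of the proof and where I would spend the most effort, likely mirroring the inductive extraction of a source strand used in the proof of Theorem~\ref{ECbij}.
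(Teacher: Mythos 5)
Your proposal is correct and follows essentially the same route as the paper's proof: the forward direction establishes goodness of the labeling via Lemma~\ref{maintechlemma}, and surjectivity is obtained by ordering the strands by decreasing label and checking every pair directly with parts b) and c) of that lemma (the paper does exactly this pairwise check, so the ``local goodness implies global exceptionality'' obstacle you anticipate is already fully resolved by your own preceding sentences and requires no inductive source-extraction). One caveat on the bookkeeping you rightly flag: as written, your well-definedness paragraph contains two convention flips that happen to cancel --- Lemma~\ref{maintechlemma} b) makes the representation of the \emph{clockwise} strand the \emph{first} entry of the exceptional pair (hence it must appear earlier in the sequence and receive the \emph{larger} label), and Definition~\ref{Def:labeled_diag} requires the clockwise-most strand at each point to carry the \emph{largest} label, not the smallest --- so the net conclusion is unaffected but both intermediate claims need to be reversed.
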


\begin{proof}
Let $\xi_\epsilon \in \mathcal{E}_\epsilon(k)$. By Lemma~\ref{maintechlemma} a), $\widetilde{\Phi}_{\epsilon}(\xi_\epsilon)$ has no strands that intersect nontrivially.  Let $(V_1,V_2)$ be an exceptional pair appearing in $\xi_\epsilon$ with $V_i$ corresponding to strand $c_i$ in $\widetilde{\Phi}_{\epsilon}(\xi_\epsilon)$ for $i=1,2$, where $c_1$ and $c_2$ intersect only at one of their endpoints.  Note that by the definition of $\widetilde{\Phi}_{\epsilon}$, the strand label of $c_1$ is larger than that of $c_2$.  From Lemma~\ref{maintechlemma} b), strand $c_1$ is clockwise from $c_2$ in $\widetilde{\Phi}_{\epsilon}(\xi_\epsilon)$.  Thus the strand labeling of $\widetilde{\Phi}_{\epsilon}(\xi_\epsilon)$ is good, so $\widetilde{\Phi}_{\epsilon}(\xi_\epsilon) \in \cD_{k,\epsilon}(k)$ for any $\xi_\epsilon \in \mathcal{E}_\epsilon(k)$.

Let $\widetilde{\Psi}_{\epsilon}: \cD_{k,\epsilon}(k) \rightarrow \mathcal{E}_\epsilon(k)$ be defined by $\{(c(i_\ell,j_\ell),\ell)\}_{\ell \in[k]} \mapsto (X_{i_k,j_k}^{\epsilon},X_{i_{k-1},j_{k-1}}^\epsilon,\ldots,X_{i_1,j_1}^\epsilon)$.  We will show that $\widetilde{\Psi}_\epsilon(d(k)) \in \mathcal{E}_\epsilon(k)$ for any $d(k) \in \mathcal{D}_{k,\epsilon}(k)$ and that $\widetilde{\Psi}_\epsilon = \widetilde{\Phi}_{\epsilon}^{-1}.$ Let $$\widetilde{\Psi}_\epsilon(\{(c(i_\ell, j_\ell), \ell)\}_{\ell \in [k]}) = (X_{i_k,j_k}^\epsilon, X_{i_{k-1}, j_{k-1}}^\epsilon,\ldots, X_{i_1, j_1}^\epsilon).$$ Consider the pair $(X_{i_s,j_s}^\epsilon, X_{i_{s^\prime}, j_{s^\prime}}^\epsilon)$ with $s > s^\prime.$ We will show that $(X_{i_s,j_s}^\epsilon, X_{i_{s^\prime}, j_{s^\prime}}^\epsilon)$ is an exceptional pair and thus conclude that $\widetilde{\Psi}_\epsilon(\{(c(i_\ell, j_\ell), \ell)\}_{\ell \in [k]}) \in \mathcal{E}_\epsilon(k)$ for any $d(k) \in \mathcal{D}_{k,\epsilon}(k)$. Clearly, $c(i_s,j_s)$ and $c(i_{s^\prime}, j_{s^\prime})$ do not intersect nontrivially. If $c(i_s, j_s)$ and $c(i_{s^\prime}, j_{s^\prime})$ do not intersect at one of their endpoints, then by Lemma~\ref{maintechlemma} c) $(X_{i_s,j_s}^\epsilon, X_{i_{s^\prime}, j_{s^\prime}}^\epsilon)$ is exceptional. Now suppose $c(i_s, j_s)$ and $c(i_{s^\prime}, j_{s^\prime})$ intersect at one of their endpoints. Because the strand-labeling of $\{(c(i_\ell, j_\ell), \ell)\}_{\ell \in [k]}$ is good, $c(i_s, j_s)$ is clockwise from  $c(i_{s^\prime}, j_{s^\prime})$. By Lemma~\ref{maintechlemma} b), we have that $(X_{i_s,j_s}^\epsilon, X_{i_{s^\prime}, j_{s^\prime}}^\epsilon)$ is exceptional.

To see that $\widetilde{\Psi}_\epsilon = \widetilde{\Phi}_\epsilon^{-1}$, observe that
$$\begin{array}{rcl}
\widetilde{\Phi}_\epsilon \left(\widetilde{\Psi}_\epsilon(\{(c(i_\ell, j_\ell), \ell)\}_{\ell \in [k]})\right) & = & \widetilde{\Phi}_\epsilon \left((X_{i_k,j_k}^\epsilon, X_{i_{k-1}, j_{k-1}}^\epsilon,\ldots, X_{i_1,j_1}^\epsilon)\right)\\
& = & \{(c(i_\ell, j_\ell), k+1 - (k+1 - \ell))\}_{\ell \in [k]} \\
& = & \{(c(i_\ell, j_\ell), \ell)\}_{\ell \in [k]}.
\end{array}$$
Thus $\widetilde{\Phi}_\epsilon \circ \widetilde{\Psi}_\epsilon = 1_{\mathcal{D}_{n,\epsilon}(k)}.$ Similarly, one shows that $\widetilde{\Psi}_\epsilon \circ \widetilde{\Phi}_\epsilon = 1_{\mathcal{E}_\epsilon(k)}.$ Thus $\widetilde{\Phi}_\epsilon$ is a bijection.
\end{proof}

The last combinatorial objects we discuss in this section are called \textbf{oriented diagrams}. These are strand diagrams whose strands have a direction. 

\begin{definition}
Let $\overrightarrow{c}(i_\ell, j_\ell)$ denote the data of the strand $c(i_\ell,j_\ell)$ on $\mathcal{S}_{n,\epsilon}$ and an orientation of each curve in $c(i_\ell, j_\ell)$ from $\epsilon_{i_\ell}$ to $\epsilon_{j_\ell}$. We refer to $\overrightarrow{c}(i_\ell, j_\ell)$ as an \textbf{oriented strand} on $\mathcal{S}_{n,\epsilon}$ and we define the \textbf{endpoints} of $\overrightarrow{c}(i_\ell, j_\ell)$ to be the endpoints of $c(i_\ell, j_\ell)$. An \textbf{oriented diagram} $\overrightarrow{d}=\{\overrightarrow{c}(i_\ell, j_\ell)\}_{\ell \in [k]}$ on $\mathcal{S}_{n,\epsilon}$ is a collection of oriented strands on $\mathcal{S}_{n,\epsilon}$ where $d = \{c(i_\ell, j_\ell)\}_{\ell \in [k]}$ is a strand diagram on $\mathcal{S}_{n,\epsilon}.$ We refer to $d$ as the \textbf{underlying diagram} of $\overrightarrow{d}$. 
\end{definition}

\begin{remark}
When it is clear from the context what the values of $n$ and $\epsilon$ are, we will often refer to a strand diagram on $\mathcal{S}_{n,\epsilon}$ simply as a \textbf{diagram}. Similarly, we will often refer to labeled diagrams (resp. oriented diagrams) on $\mathcal{S}_{n,\epsilon}$ as \textbf{labeled diagrams} (resp. \textbf{oriented diagrams}). Additionally, if we have two diagrams $d_1$ and $d_2$ (both on $\mathcal{S}_{n, \epsilon}$) where $d_1 \subset d_2$, we say that $d_1$ is a \textbf{subdiagram} of $d_2$. One analogously defines \textbf{labeled subdiagrams} (resp. \textbf{oriented subdiagrams}) of a labeled diagram (resp. oriented diagram).\end{remark}

We now define a special subset of the oriented diagrams on $\mathcal{S}_{n,\epsilon}$. As we will see, each element in this subset of oriented diagrams, denoted $\overrightarrow{\mathcal{D}}_{n,\epsilon}$, will correspond to a unique $\textbf{c}$-matrix $C \in \textbf{c}\text{-mat}(Q_\epsilon)$ and vice versa. Thus we obtain a diagrammatic classification of \textbf{c}-matrices (see Theorem~\ref{c-matClassif}).

\begin{definition}\label{def:cmatdiag}
Let $\overrightarrow{\mathcal{D}}_{n,\epsilon}$ denote the set of oriented diagrams $\overrightarrow{d} = \{\overrightarrow{c}(i_\ell,j_\ell)\}_{\ell \in [n]}$ on $\mathcal{S}_{n,\epsilon}$ with the property that for each $k \in [0,n]$ there exist integers $i_1, i_2, k, j \in [0,n]$ where $i_1< k< i_2$ and $j \in [0,n]\backslash\{i_1, k, i_2\}$ such that the oriented subdiagram $\overrightarrow{d}_1$ of $\overrightarrow{d}$ consisting of the oriented strands of $\overrightarrow{d}$ with $\epsilon_k$ as an endpoint is an oriented subdiagram of one of the following two oriented diagrams on $\mathcal{S}_{n,\epsilon}$:

\begin{itemize}
\item[i)] $\overrightarrow{d}_+ = \{\overrightarrow{c}(k,i_1),\overrightarrow{c}(k,i_2), \overrightarrow{c}(j,k)\}$ where $\epsilon_k = +$ (shown in Figure~\ref{local1} (left)) or
\item[ii)] $\overrightarrow{d}_- = \{\overrightarrow{c}(i_1,k),\overrightarrow{c}(i_2,k), \overrightarrow{c}(k,j)\}$ where $\epsilon_k = -$ (shown in Figure~\ref{local1} (right)).
\end{itemize}
\end{definition}

\begin{figure}[h]
\includegraphics[scale=1.5]{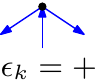} \ \ \ \ \ \ \ \ \ \  \includegraphics[scale=1.5]{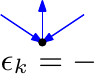}
\caption{}
\label{local1}
\end{figure}

\begin{lemma}\label{cmatdiag}
Let $\{\overrightarrow{c_i}\}_{i \in [k]}$ be a collection of \textbf{c}-vectors of $Q_\epsilon$ where $k \le n$. Let $\overrightarrow{c_i} = \pm\underline{\dim}(X^\epsilon_{i_1,i_2})$ where the sign is determined by $\overrightarrow{c_i}$. There is an injective map 
$$\begin{array}{rcl}
\{ \text{\textbf{noncrossing collections} $\{\overrightarrow{c_i}\}_{i\in[k]}$ of $Q_\epsilon$}\} & \longrightarrow & \{\text{oriented diagrams } \overrightarrow{d} = \{\overrightarrow{c}(i_\ell, j_\ell)\}_{\ell \in [k]}\}\end{array}$$
defined by
$$\begin{array}{rcl}
\overrightarrow{c_i} &\longmapsto & \left\{\begin{array}{r c l c c c c c} \overrightarrow{c}(i_1,i_2) & : & \text{\overrightarrow{c_i} is positive}\\ \overrightarrow{c}(i_2,i_1) & : & \text{\overrightarrow{c_i} is negative,}
\end{array}\right.
\end{array}$$

\noindent where $\{\overrightarrow{c_i}\}_{i \in [k]}$ is a \textbf{noncrossing collection} of \textbf{c}-vectors if $\Phi_{\epsilon}(X^\epsilon_{i_1,i_2})$ and $\Phi_{\epsilon}(X^\epsilon_{i^\prime_1,i^\prime_2})$ do not intersect nontrivially for any $i,i^\prime \in [k]$. In particular, each \textbf{c}-matrix $C_\epsilon \in \textbf{c}\text{-mat}(Q_\epsilon)$ determines a unique oriented diagram denoted $\overrightarrow{d}_{C_\epsilon}$ with $n$ oriented strands.
\end{lemma}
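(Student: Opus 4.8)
The plan is to establish three things in turn: that the assignment is well defined (its image is genuinely an oriented diagram), that it is injective, and finally that a full \textbf{c}-matrix yields a unique oriented diagram with $n$ strands. Throughout I would use Lemma~\ref{maintechlemma} to convert combinatorial hypotheses about strands into homological statements about exceptional pairs, and Theorem~\ref{ECbij} to move between exceptional collections and strand diagrams.

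First I would check well-definedness. Given a noncrossing collection $\{\overrightarrow{c_i}\}_{i\in[k]}$ with $|\overrightarrow{c_i}|=\underline{\dim}(X^\epsilon_{i_1,i_2})$, the underlying strands $\Phi_\epsilon(X^\epsilon_{i_1,i_2})$ do not intersect nontrivially in pairs, which is condition $a)$ in the definition of a strand diagram. The substantive point is condition $b)$: the graph they determine must be a forest. To upgrade the pairwise hypothesis to this global acyclicity statement, I would show that the representations $\{X^\epsilon_{i_1,i_2}\}$ form an exceptional collection and then apply Theorem~\ref{ECbij}. The acyclicity does \emph{not} follow from Lemma~\ref{maintechlemma} alone, since pairwise noncrossing strands can still close up into a cycle; rather, I would use that the $\overrightarrow{c_i}$ are honest \textbf{c}-vectors. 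By Theorem~\ref{st}, the \textbf{c}-vectors occurring together in a \textbf{c}-matrix are precisely those arising from a complete exceptional sequence, and I would argue that a noncrossing collection of \textbf{c}-vectors sits inside such a family; the cycle argument from the proof of Theorem~\ref{ECbij} (a cycle forces, via the clockwise dichotomy of Lemma~\ref{maintechlemma}$b)$, a cyclic chain of incompatible exceptional-pair constraints) then rules out cycles. Attaching to each strand the orientation dictated by the sign of $\overrightarrow{c_i}$ turns the strand diagram into an oriented diagram.

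Injectivity is then formal. From an oriented strand $\overrightarrow{c}(i_1,i_2)$ one reads off the unoriented strand $c(i_1,i_2)$, hence the indecomposable $X^\epsilon_{i_1,i_2}=\Phi_\epsilon^{-1}(c(i_1,i_2))$ and the vector $|\overrightarrow{c_i}|=\underline{\dim}(X^\epsilon_{i_1,i_2})$, while the direction of the arrow (emanating from $\epsilon_{i_1}$ versus from $\epsilon_{i_2}$) records whether $\overrightarrow{c_i}$ is positive or negative. Thus the oriented diagram recovers the collection $\{\overrightarrow{c_i}\}_{i\in[k]}$ in full, so the map is injective. Specializing to $C_\epsilon\in\textbf{c}\text{-mat}(Q_\epsilon)$, its $n$ \textbf{c}-vectors reorder into a complete exceptional sequence by Theorem~\ref{st}, so Theorem~\ref{ECbij} makes the $n$ underlying strands a strand diagram; a forest with $n$ edges on the $n+1$ points of $\mathcal{S}_{n,\epsilon}$ is a spanning tree, so this is an oriented diagram with exactly $n$ strands. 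Since a \textbf{c}-matrix is defined only up to permutation of its rows, and such a permutation leaves the resulting \emph{set} of oriented strands unchanged, the diagram $\overrightarrow{d}_{C_\epsilon}$ is well defined and unique.

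I expect the main obstacle to be the forest (acyclicity) step in well-definedness. The noncrossing condition is purely local, constraining only pairs of strands, whereas being a forest is a global condition; Lemma~\ref{maintechlemma} settles the pairwise question but the passage from ``pairwise exceptional'' to ``forms an exceptional collection'' genuinely requires that the $\overrightarrow{c_i}$ be \textbf{c}-vectors rather than arbitrary noncrossing strands. Pinning down exactly how Theorem~\ref{st} supplies the global complete-exceptional-sequence structure needed to exclude cycles is the delicate part of the argument; the injectivity and the spanning-tree count for a single \textbf{c}-matrix are comparatively routine once this is in hand.
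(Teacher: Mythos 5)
The paper offers no proof of this lemma at all --- it is stated as an observation and immediately followed by an example --- so there is no argument of the authors' to compare yours against. The parts of your proposal that handle injectivity (each oriented strand determines its \textbf{c}-vector, since the endpoints recover $\underline{\dim}(X^\epsilon_{i_1,i_2})$ via $\Phi_\epsilon^{-1}$ and the orientation recovers the sign) and the ``in particular'' clause (Theorem~\ref{st} plus Theorem~\ref{ECbij}, together with the count that a forest with $n$ edges on the $n+1$ points of $\mathcal{S}_{n,\epsilon}$ is a spanning tree, and the observation that permuting rows of $C_\epsilon$ does not change the set of oriented strands) are correct and are surely what the authors had in mind.

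The gap sits exactly where you predicted it would, in the forest step, and your proposed repair does not close it. You assert that ``a noncrossing collection of \textbf{c}-vectors sits inside such a family,'' i.e.\ inside the rows of a single \textbf{c}-matrix, but you give no argument, and under the literal reading of the hypothesis --- a set of vectors each of which is individually a \textbf{c}-vector of $Q_\epsilon$, subject only to the pairwise noncrossing condition --- the assertion is false. For $n\ge 3$ take the three positive \textbf{c}-vectors $e_1=\underline{\dim}(X^\epsilon_{0,1})$, $e_2=\underline{\dim}(X^\epsilon_{1,2})$, and $e_1+e_2=\underline{\dim}(X^\epsilon_{0,2})$: the corresponding strands $c(0,1)$, $c(1,2)$, $c(0,2)$ pairwise share endpoints, hence by Lemma~\ref{nocommonendpt} form a noncrossing collection with $k=3\le n$, yet the graph they determine is a $3$-cycle, so the image is not an oriented diagram, and in particular these three vectors cannot all be rows of one \textbf{c}-matrix. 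So either one reads the hypothesis as ``a sub-collection of the rows of a single \textbf{c}-matrix'' --- in which case the embedding into a complete exceptional sequence is the hypothesis rather than something to be deduced, the noncrossing condition becomes automatic by Lemma~\ref{maintechlemma}~a), and the forest property follows at once from Theorem~\ref{st} and Theorem~\ref{ECbij} --- or the well-definedness claim genuinely fails. Your writeup should commit to the former reading and delete the unsupported ``sits inside such a family'' step rather than presenting it as a consequence of Theorem~\ref{st}.
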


\begin{example}\label{thirdexample}
Let $n = 4$ and $\epsilon = (+,+, -, +,-)$ so that $Q_{\epsilon}= 1 \stackrel{a_1^+}{\longrightarrow} 2 \stackrel{a_2^-}{\longleftarrow} 3 \stackrel{a_3^+}{\longrightarrow} 4.$  After performing the mutation sequence $\mu_3 \circ \mu_2$ to the corresponding framed quiver, we have the $\bc$-matrix with its oriented diagram.
$$
\left[
\begin{array}{c c c c}
1 & 1 & 0 & 0 \\
0 & 0 & 1 & 0 \\
0 & -1 & -1 & 0 \\
0 & 0 & 0 & 1
\end{array}\right] \ \ \ \ \ \ \ \ \ \  
\raisebox{-.1in}{\includegraphics[scale=1.5]{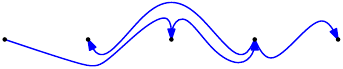}}
$$
\end{example}

The following theorem shows oriented diagrams belonging to $\overrightarrow{\mathcal{D}}_{n,\epsilon}$ are in bijection with \textbf{c}-matrices of $Q_\epsilon.$  We delay its proof until Section~\ref{sec:mct} because it makes use of the concept of a mixed cobinary tree.

\begin{theorem}\label{c-matClassif}
The map $\textbf{c}\text{-mat}(Q_\epsilon)\to \overrightarrow{\mathcal{D}}_{n,\epsilon}$ induced by the map defined in Lemma \ref{cmatdiag} is a bijection.
\end{theorem}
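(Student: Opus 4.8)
### The goal

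The plan is to prove that the map $\textbf{c}\text{-mat}(Q_\epsilon) \to \overrightarrow{\mathcal{D}}_{n,\epsilon}$ from Lemma~\ref{cmatdiag} is a bijection. By Lemma~\ref{cmatdiag}, this map is already injective and lands in the set of oriented diagrams with $n$ oriented strands; the content of the theorem is that (1) the image actually lies inside the \emph{special} subset $\overrightarrow{\mathcal{D}}_{n,\epsilon}$ cut out by the local conditions of Definition~\ref{def:cmatdiag}, and (2) every element of $\overrightarrow{\mathcal{D}}_{n,\epsilon}$ arises this way (surjectivity). The whole argument routes through the mixed cobinary trees of Igusa--Ostroff, as the text announces.

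### The approach

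First I would establish that each $\textbf{c}$-matrix $C_\epsilon$ produces an oriented diagram lying in $\overrightarrow{\mathcal{D}}_{n,\epsilon}$. By Theorem~\ref{st}, the rows of $C_\epsilon$ are (signed) dimension vectors of an exceptional collection, so by Lemma~\ref{cmatdiag} they form a noncrossing collection and the underlying diagram is genuinely a strand diagram with $n$ strands; the Speyer--Thomas sign-coherence then pins down the orientations. The nontrivial part is verifying the \emph{local} structure at each vertex $\epsilon_k$: one must show the oriented strands incident to $\epsilon_k$ fit into one of the two templates $\overrightarrow{d}_+$ or $\overrightarrow{d}_-$ of Figure~\ref{local1}. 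I would prove this by setting up a bijection between oriented diagrams in $\overrightarrow{\mathcal{D}}_{n,\epsilon}$ and mixed cobinary trees on $n+1$ nodes with sign sequence $\epsilon$: reading the strands incident to $\epsilon_k$ as the edges at a node of the tree, the conditions $\epsilon_k = +$ (at most two strands leaving, one entering, below the point) and $\epsilon_k = -$ (at most two entering, one leaving, above the point) are exactly the local vertex conditions defining a mixed cobinary tree. Thus Definition~\ref{def:cmatdiag} is a diagrammatic transcription of the MCT axioms, and I would state this as a lemma: $\overrightarrow{\mathcal{D}}_{n,\epsilon}$ is in canonical bijection with the set of mixed cobinary trees of type $\epsilon$.

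### Assembling the bijection

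With that translation in hand, I would invoke the Igusa--Ostroff theory \cite{io13}, in which mixed cobinary trees are shown to enumerate (and geometrically realize) the $\textbf{c}$-matrices of the corresponding type $\AAA_n$ quiver. Concretely: the map $C_\epsilon \mapsto \overrightarrow{d}_{C_\epsilon}$ of Lemma~\ref{cmatdiag} composed with the MCT-to-diagram dictionary recovers the Igusa--Ostroff correspondence between $\textbf{c}$-matrices and mixed cobinary trees, which is a bijection. Injectivity is already known from Lemma~\ref{cmatdiag}, so it remains only to transport surjectivity across the dictionary: given $\overrightarrow{d} \in \overrightarrow{\mathcal{D}}_{n,\epsilon}$, the local conditions guarantee it corresponds to a mixed cobinary tree, which corresponds to a $\textbf{c}$-matrix $C_\epsilon$, and one checks that $\overrightarrow{d}_{C_\epsilon} = \overrightarrow{d}$ by comparing the signed dimension vectors read off from $\overrightarrow{d}$ against the $\textbf{c}$-vectors. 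This last comparison uses only the defining sign conventions in Lemma~\ref{cmatdiag} (positive $\overrightarrow{c_i} \leftrightarrow \overrightarrow{c}(i_1,i_2)$, negative $\leftrightarrow \overrightarrow{c}(i_2,i_1)$), together with sign-coherence.

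### The main obstacle

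The hard part will be the vertex-by-vertex verification that membership in $\overrightarrow{\mathcal{D}}_{n,\epsilon}$ is equivalent to the mixed cobinary tree axioms, i.e. showing the local templates $\overrightarrow{d}_\pm$ are neither too restrictive nor too permissive. One direction (a $\textbf{c}$-matrix forces the local picture) requires extracting, from the homological/mutation data, the fact that at a $+$ vertex a strand can leave toward at most two points and receive at most one, and dually at a $-$ vertex — this is where sign-coherence and the explicit description of indecomposables $X^\epsilon_{i,j}$ do the real work, via Lemma~\ref{maintechlemma} controlling which incident strands may coexist without crossing. The reverse direction (the local picture suffices) amounts to checking that any diagram satisfying the templates at every vertex actually assembles into a consistent global sign pattern realizable by a single mutation sequence; here I expect to lean entirely on \cite{io13} rather than reprove the realizability. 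I would therefore isolate the local equivalence as the crux and delegate the global realization to the cited MCT machinery.
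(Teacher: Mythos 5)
Your proposal is correct and follows essentially the same route as the paper: the paper's proof also factors the map as $f = g\circ h$, where $g$ is the bijection $\mathcal{T}_\epsilon \to \overrightarrow{\mathcal{D}}_{n,\epsilon}$ established separately (your ``local equivalence'' lemma is exactly Theorem~\ref{thm: bijection between MCTs and OSDs}) and $h$ is the Igusa--Ostroff bijection between $\textbf{c}$-matrices and mixed cobinary trees cited from \cite{io13}, with the identification $\overrightarrow{d}_{C_\epsilon}=\overrightarrow{d}$ checked, as you propose, by comparing the signed dimension vectors (``$\textbf{c}$-matrices'') attached to each of the three kinds of objects.
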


\subsection{Proof of Lemma~\ref{maintechlemma}}\label{firstproofs}

The proof of Lemma~\ref{maintechlemma} requires some notions from representation theory of finite dimensional algebras, which we now briefly review. For a more comprehensive treatment of the following notions, we refer the reader to \cite{ass06}.

\begin{definition}
Given a quiver $Q$ with $\#Q_0=n$, the \textbf{Euler characteristic} (of $Q$) is the $\ZZ$-bilinear (nonsymmetric) form $\ZZ^n \times \ZZ^n \rightarrow \ZZ$ defined by
\[
\langle \underline{\dim}(V),\underline{\dim}(W)\rangle = \sum_{i\ge 0}(-1)^i\dim \Ext_{\Bbbk Q}^i(V,W)
\]
for every $V,W \in \rep_\Bbbk(Q)$.
\end{definition}

For hereditary algebras $A$ (e.g. path algebras of acyclic quivers), $\Ext_{A}^i(V,W)=0$ for $i \geq 2$ and the formula reduces to

$$\langle \underline{\dim}(V),\underline{\dim}(W)\rangle = \dim \Hom_{\Bbbk Q}(V,W) - \dim \Ext_{\Bbbk Q}^1(V,W)$$

The following result gives a simple formula for the Euler characteristic. We note that this formula is independent of the orientation of the arrows of $Q$. The following lemma can be found in \cite{ass06}. 

\begin{lemma} Given an acyclic quiver $Q$ with $\#Q_0=n$ and integral vectors $x=(x_1,x_2,\ldots,x_n),y=(y_1,y_2,\ldots,y_n) \in \ZZ^n$, the Euler characteristic of $Q$ has the form
\[
\langle x,y\rangle = \sum_{i \in Q_0}x_iy_i - \sum_{\alpha \in Q_1}x_{s(\alpha)}y_{t(\alpha)}
\]
\end{lemma}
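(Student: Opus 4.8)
The plan is to prove the Euler characteristic formula
\[
\langle x,y\rangle = \sum_{i \in Q_0}x_iy_i - \sum_{\alpha \in Q_1}x_{s(\alpha)}y_{t(\alpha)}
\]
by reducing both sides to a computation with the reduced expression for the Euler form on hereditary algebras, namely $\langle \underline{\dim}(V),\underline{\dim}(W)\rangle = \dim \Hom_{\Bbbk Q}(V,W) - \dim \Ext_{\Bbbk Q}^1(V,W)$. Since the bilinear form is determined by its values on dimension vectors, and the standard basis vectors $e_i = \underline{\dim}(S_i)$ (for $S_i$ the simple at vertex $i$) span $\ZZ^n$, the strategy is to verify the formula on the pairs $(e_i, e_j)$ and then extend by bilinearity. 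This reduces the whole statement to computing $\dim\Hom(S_i,S_j)$ and $\dim\Ext^1(S_i,S_j)$ for the simple modules, which is a finite, local computation.

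First I would recall the projective resolution of a simple module over a path algebra. For an acyclic quiver $Q$, each simple $S_i$ has a short projective resolution
\[
0 \to \bigoplus_{\alpha: i \to t(\alpha)} P_{t(\alpha)} \to P_i \to S_i \to 0,
\]
where $P_i$ is the indecomposable projective at vertex $i$; here the middle term is indexed by the arrows with source $i$. Applying $\Hom_{\Bbbk Q}(-,S_j)$ and using $\dim\Hom_{\Bbbk Q}(P_i, S_j) = \delta_{ij}$ gives a two-term complex whose cohomology computes $\Hom(S_i,S_j)$ and $\Ext^1(S_i,S_j)$. Concretely this yields $\dim\Hom(S_i,S_j) - \dim\Ext^1(S_i,S_j) = \delta_{ij} - \#\{\alpha \in Q_1 : s(\alpha) = i,\ t(\alpha) = j\}$, since the number of arrows from $i$ to $j$ counts how many copies of $P_j$ appear in the first term of the resolution. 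This is exactly the $(i,j)$ entry of the matrix of the form relative to the standard basis, matching $\sum_i \delta_{ij} - \sum_\alpha \delta_{s(\alpha),i}\delta_{t(\alpha),j}$.

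Having established the formula on the basis pairs $(e_i, e_j)$, I would then invoke bilinearity of $\langle -,-\rangle$ to conclude. Writing $x = \sum_i x_i e_i$ and $y = \sum_j y_j e_j$, we get $\langle x, y\rangle = \sum_{i,j} x_i y_j \langle e_i, e_j\rangle$, and substituting the basis computation produces precisely $\sum_{i} x_i y_i - \sum_{\alpha} x_{s(\alpha)} y_{t(\alpha)}$. I would also note explicitly that the right-hand side visibly depends only on the underlying graph together with the source/target data, which justifies the remark preceding the lemma that the value is insensitive to a global reversal of all arrows is slightly subtler and not actually needed here; the stated orientation-independence of $\langle x,y\rangle$ as a function of dimension vectors follows because reversing an arrow $\alpha: i \to j$ to $\beta: j \to i$ changes the summand $x_i y_j$ to $x_j y_i$, so the form itself does change, but the symmetrized form (and hence the associated Tits quadratic form) does not.

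The main obstacle is justifying the existence and exactness of the short projective resolution of $S_i$ and the identification of $\Ext^1$ with the cokernel of the induced map on $\Hom$ spaces. For a general algebra this would require care, but for path algebras of acyclic quivers the algebra is hereditary, so projective dimension is at most one and the two-term resolution suffices; the key input is that $\Bbbk Q$ acyclic implies $\operatorname{gl.dim} \Bbbk Q \le 1$. Since the lemma is attributed to \cite{ass06} and the hereditary reduction of the Euler form was already stated above, the cleanest route is to cite the resolution of simples from \cite{ass06} and carry out only the bookkeeping of arrow counts, so in practice the entire proof is the short bilinearity argument built on the basis-case computation, with all the representation-theoretic content deferred to the cited standard resolution.
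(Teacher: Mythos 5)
Your proof is correct. The paper itself offers no proof of this lemma---it is quoted from \cite{ass06}---and your argument (the projective resolution $0 \to \bigoplus_{\alpha\colon s(\alpha)=i} P_{t(\alpha)} \to P_i \to S_i \to 0$, taking the Euler characteristic of the two-term complex obtained by applying $\Hom_{\Bbbk Q}(-,S_j)$ to get $\langle e_i,e_j\rangle = \delta_{ij} - \#\{\alpha : s(\alpha)=i,\ t(\alpha)=j\}$, then extending by bilinearity over the basis $e_i = \underline{\dim}(S_i)$) is exactly the standard proof that the citation points to. The only blemish is the closing aside on orientation-independence, which is garbled and conflates the form with its symmetrization, but you correctly flag it as not needed for the lemma.
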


Next, we give a slight simplification of the previous formula. Recall that the support of $V \in \rep_\Bbbk(Q)$ is the set $\text{supp}(V) := \{i\in Q_0 : V_i \neq 0\}$. Thus for quivers of the form $Q_\epsilon$, any indecomposable representation $X^\epsilon_{i,j} \in \text{ind}(\text{rep}_\Bbbk(Q_\epsilon))$ has $\text{supp}(X^\epsilon_{i,j}) = [i+1,j].$

\begin{lemma}\label{eulerform}
Let $X^\epsilon_{k,\ell}, X^\epsilon_{i,j} \in \text{ind}(\text{rep}_\Bbbk(Q_\epsilon))$ and $A := \{a \in (Q_\epsilon)_1: s(a), t(a) \in \text{supp}(X^\epsilon_{k,\ell})\cap \text{supp}(X^\epsilon_{i,j})\}$. Then 
$$\langle \underline{\dim}(X^\epsilon_{k,\ell}), \underline{\dim}(X^\epsilon_{i,j})\rangle = \chi_{\text{supp}(X^\epsilon_{k,\ell}) \cap \text{supp}(X^\epsilon_{i,j})} - \#\left(\{a \in (Q_\epsilon)_1: s(a) \in \text{supp}(X^\epsilon_{k,\ell}), \ t(a) \in \text{supp}(X^\epsilon_{i,j})\}\backslash A\right)$$
where $\chi_{\text{supp}(X^\epsilon_{k,\ell}) \cap \text{supp}(X^\epsilon_{i,j})} = 1$ if $\text{supp}(X^\epsilon_{k,\ell})\cap \text{supp}(X^\epsilon_{i,j}) \neq \emptyset$ and 0 otherwise.\end{lemma}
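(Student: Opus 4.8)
The plan is to specialize the formula of the preceding lemma to interval dimension vectors and then carry out a single combinatorial collapse. Write $S := \text{supp}(X^\epsilon_{k,\ell}) = [k+1,\ell]$ and $T := \text{supp}(X^\epsilon_{i,j}) = [i+1,j]$, so that $\underline{\dim}(X^\epsilon_{k,\ell})$ and $\underline{\dim}(X^\epsilon_{i,j})$ are exactly the indicator vectors of $S$ and $T$. Substituting these into $\langle x,y\rangle = \sum_{m \in Q_0} x_m y_m - \sum_{\alpha \in Q_1} x_{s(\alpha)} y_{t(\alpha)}$, the first sum counts the vertices of $Q_\epsilon$ lying in $S \cap T$, hence equals $\#(S\cap T)$, and the second sum counts the arrows $a$ with $s(a)\in S$ and $t(a)\in T$. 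This yields the ``raw'' formula $\langle \underline{\dim}(X^\epsilon_{k,\ell}), \underline{\dim}(X^\epsilon_{i,j})\rangle = \#(S\cap T) - \#\{a\in (Q_\epsilon)_1 : s(a)\in S,\ t(a)\in T\}$.

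Next I would match this against the target expression. Since $A = \{a\in(Q_\epsilon)_1 : s(a),t(a)\in S\cap T\}$ is contained in $\{a : s(a)\in S,\ t(a)\in T\}$, the subtracted set-difference in the statement has cardinality $\#\{a : s(a)\in S, t(a)\in T\} - \#A$, so the target formula reads $\chi_{S\cap T} - \#\{a : s(a)\in S, t(a)\in T\} + \#A$. Comparing with the raw formula, the arrow-counting terms $\#\{a : s(a)\in S, t(a)\in T\}$ cancel, and the whole lemma reduces to the single numerical identity $\#(S\cap T) = \chi_{S\cap T} + \#A$.

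The heart of the argument is this last identity, and it follows from the fact that $S$ and $T$ are intervals, so $S\cap T$ is again an interval (possibly empty). If $S\cap T=\emptyset$, then all three quantities are $0$. If $S\cap T = [p,q]$ is nonempty, then $\#(S\cap T)=q-p+1$ and $\chi_{S\cap T}=1$; moreover, because consecutive vertices $m,m+1$ of a type $\AAA_n$ quiver are joined by exactly one arrow, the arrows with both endpoints in $[p,q]$ are precisely $a_p^{\epsilon_p},\ldots,a_{q-1}^{\epsilon_{q-1}}$, giving $\#A = q-p$. Hence $\#(S\cap T) = (q-p)+1 = \#A + \chi_{S\cap T}$. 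Equivalently, the induced subquiver of $Q_\epsilon$ on the connected interval $S\cap T$ is a path, whose underlying tree has exactly one more vertex than it has edges, which is what converts the vertex-count $\#(S\cap T)$ into the Euler-characteristic term $\chi_{S\cap T}$ at the cost of the internal arrows $A$.

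I do not anticipate a genuine obstacle here; the proof is elementary once the two formulas are aligned. The only points requiring care are bookkeeping ones: verifying that $A$ is a subset of the larger arrow set so that the two subtractions combine cleanly, and observing that the intersection of the two interval supports is connected, which is exactly what makes the vertex/edge count collapse. As a sanity check, orientations play no role anywhere in the argument, consistent with the orientation-independence already noted for the Euler characteristic.
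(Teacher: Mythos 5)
Your proof is correct and follows essentially the same route as the paper: both substitute the interval indicator vectors into the bilinear form, split the arrow count using $A \subseteq \{a : s(a)\in \text{supp}(X^\epsilon_{k,\ell}),\ t(a)\in \text{supp}(X^\epsilon_{i,j})\}$, and conclude via the observation that $\#A = \#(\text{supp}(X^\epsilon_{k,\ell})\cap \text{supp}(X^\epsilon_{i,j})) - 1$ when the intersection is nonempty and $0$ otherwise. Your explicit justification of that count via the interval $[p,q]$ and the path structure of $Q_\epsilon$ is a welcome bit of extra detail the paper leaves implicit.
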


\begin{proof} We have that
$$\begin{array}{rcl}
 \langle \underline{\dim}(X^\epsilon_{k,\ell}), \underline{\dim}(X^\epsilon_{i,j})\rangle & = & \displaystyle \sum_{m \in ({Q_\epsilon})_0} \underline{\dim}(X^\epsilon_{k,\ell})_m\underline{\dim}(X^\epsilon_{i,j})_m
- \sum_{a \in ({Q_\epsilon})_1} \underline{\dim}(X^\epsilon_{k,\ell})_{s(a)}\underline{\dim}(X^\epsilon_{i,j})_{t(a)} \\
& = &  \#\left(\text{supp}(X^\epsilon_{k,\ell})\cap \text{supp}(X^\epsilon_{i,j})\right)\\
& &- \#\{\alpha\in (Q_\epsilon)_1: s(a) \in \text{supp}(X^\epsilon_{k,\ell}), \ t(a) \in \text{supp}(X^\epsilon_{i,j})\}\\
& = &  \#\left(\text{supp}(X^\epsilon_{k,\ell})\cap \text{supp}(X^\epsilon_{i,j})\right) - \#A\\
& &- \#\left(\{a \in (Q_\epsilon)_1: s(a) \in \text{supp}(X^\epsilon_{k,\ell}), \ t(a) \in \text{supp}(X^\epsilon_{i,j})\}\backslash A\right).
\end{array}$$
Observe that if $\text{supp}(X^\epsilon_{k,\ell})\cap \text{supp}(X^\epsilon_{i,j}) \neq \emptyset$, then $\#A = \#(\text{supp}(X^\epsilon_{k,\ell})\cap \text{supp}(X^\epsilon_{i,j})) - 1.$ Otherwise $\#A = 0.$ Thus $$\langle \underline{\dim}(X^\epsilon_{k,\ell}), \underline{\dim}(X^\epsilon_{i,j})\rangle = \chi_{\text{supp}(X^\epsilon_{k,\ell}) \cap \text{supp}(X^\epsilon_{i,j})} - \#\left(\{a \in (Q_\epsilon)_1: s(a) \in \text{supp}(X^\epsilon_{k,\ell}), \ t(a) \in \text{supp}(X^\epsilon_{i,j})\}\backslash A\right)  .$$\end{proof}

We now present the lemmas that we will use in the proof of Lemma~\ref{maintechlemma}. The proofs of the next four lemmas use very similar techniques so we only prove Lemma~\ref{interlaced}. The following four lemmas characterize when $\Hom_{\Bbbk Q_\epsilon}(-,-)$ and $\Ext^1_{\Bbbk Q_\epsilon}(-,-)$ vanish for a given type $\mathbb{A}_n$ quiver $Q_\epsilon.$ The conditions describing when $\Hom_{\Bbbk Q_\epsilon}(-,-)$ and $\Ext^1_{\Bbbk Q_\epsilon}(-,-)$ vanish are given in terms of inequalities satisfied by the indices that describe a pair of indecomposable representations of $Q_\epsilon$ and the entries of $\epsilon.$

\begin{lemma}\label{interlaced}
Let $X^\epsilon_{k,\ell}, X^\epsilon_{i,j} \in \text{ind}(\text{rep}_\Bbbk(Q_\epsilon))$. Assume $0 \le i < k < j < \ell \le n$.

$\begin{array}{rll}
i) & \text{$\Hom_{\Bbbk Q_\epsilon}(X^\epsilon_{i,j}, X^\epsilon_{k,\ell}) \neq 0$ if and only if $\epsilon_k = -$ and $\epsilon_j = -$.}\\
ii) & \text{$\Hom_{\Bbbk Q_\epsilon}(X^\epsilon_{k,\ell}, X^\epsilon_{i,j}) \neq 0$ if and only if $\epsilon_k = +$ and $\epsilon_j = +$.}\\
iii) & \text{$\Ext^1_{\Bbbk Q_\epsilon}(X^\epsilon_{i,j}, X^\epsilon_{k,\ell}) \neq 0$ if and only if $\epsilon_k = +$ and $\epsilon_j = +$.}\\
iv) & \text{$\Ext^1_{\Bbbk Q_\epsilon}(X^\epsilon_{k,\ell}, X^\epsilon_{i,j}) \neq 0$ if and only if $\epsilon_k = -$ and $\epsilon_j = -$.}
\end{array}$
\end{lemma}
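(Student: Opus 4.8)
The plan is to prove all four statements simultaneously by computing the relevant $\Hom$ and $\Ext^1$ spaces via the Euler characteristic formula of Lemma~\ref{eulerform} together with the no-both-directions $\Hom$ vanishing from Remark~\ref{rem:hom0}. The interlacing hypothesis $0 \le i < k < j < \ell \le n$ makes the combinatorics of supports completely explicit: since $\text{supp}(X^\epsilon_{i,j}) = [i+1,j]$ and $\text{supp}(X^\epsilon_{k,\ell}) = [k+1,\ell]$, the intersection is $[k+1,j]$, which is nonempty because $k < j$. This means $\chi_{\text{supp}\cap\text{supp}} = 1$ in every case, so by Lemma~\ref{eulerform} each Euler characteristic equals $1$ minus a count of certain ``boundary'' arrows. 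The key geometric point is that the relevant arrows are exactly the arrows at the two places where the supports begin/end to overlap, namely near vertex $k$ (the arrow $a_k^{\epsilon_k}$ between $k$ and $k+1$) and near vertex $j$ (the arrow $a_j^{\epsilon_j}$ between $j$ and $j+1$).

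\textbf{Key steps.} First I would compute $\langle \underline{\dim}(X^\epsilon_{i,j}), \underline{\dim}(X^\epsilon_{k,\ell})\rangle$ using Lemma~\ref{eulerform}, identifying the set of arrows $a$ with $s(a) \in \text{supp}(X^\epsilon_{i,j}) = [i+1,j]$ and $t(a) \in \text{supp}(X^\epsilon_{k,\ell}) = [k+1,\ell]$ but not lying entirely inside the overlap. Such an arrow must cross the boundary of the overlap region, so it is either the arrow joining $j$ and $j+1$ oriented as $j \to j+1$ (which happens precisely when $\epsilon_j = +$) or the arrow joining $k$ and $k+1$ oriented as $k \to k+1$ (happening when $\epsilon_k = +$, with source $k+1 \in [i+1,j]$... here one tracks carefully which endpoint lands in which support). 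A careful case check shows the correction count equals $1$ exactly when $\epsilon_k = +$ and $\epsilon_j = +$, giving $\langle \underline{\dim}(X^\epsilon_{i,j}), \underline{\dim}(X^\epsilon_{k,\ell})\rangle = 0$ in that case and $= 1$ otherwise. I would run the symmetric computation for $\langle \underline{\dim}(X^\epsilon_{k,\ell}), \underline{\dim}(X^\epsilon_{i,j})\rangle$, where the roles of $+$ and $-$ at vertices $k$ and $j$ swap, yielding the value $0$ exactly when $\epsilon_k = -$ and $\epsilon_j = -$.

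\textbf{Translating Euler characteristics into Hom/Ext statements.} Using the hereditary formula $\langle \underline{\dim}(V), \underline{\dim}(W)\rangle = \dim\Hom - \dim\Ext^1$, I would combine the Euler computations with the dichotomy of Remark~\ref{rem:hom0}, which guarantees that at least one of $\Hom_{\Bbbk Q_\epsilon}(X^\epsilon_{i,j}, X^\epsilon_{k,\ell})$ and $\Hom_{\Bbbk Q_\epsilon}(X^\epsilon_{k,\ell}, X^\epsilon_{i,j})$ vanishes. Since all $\Hom$ and $\Ext^1$ dimensions here are $0$ or $1$ (these are interval modules over a type $\mathbb{A}$ quiver, so all multiplicity spaces are at most one-dimensional), the sign of the Euler form pins down exactly which of $\Hom$ or $\Ext^1$ is nonzero. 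For part $(i)$: when $\epsilon_k = \epsilon_j = -$ the Euler form $\langle X_{i,j}, X_{k,\ell}\rangle$ equals $1$, forcing $\Hom \ne 0$ and $\Ext^1 = 0$; otherwise it is $0$, and combined with Remark~\ref{rem:hom0} (which kills the $\Hom$ whenever the opposite $\Hom$ survives) one concludes $\Hom = 0$. Parts $(ii)$, $(iii)$, $(iv)$ follow by reading off the complementary cases and using that $\Ext^1(X_{i,j}, X_{k,\ell}) \ne 0$ is forced exactly when $\Hom(X_{i,j}, X_{k,\ell}) = 0$ but the Euler form is negative.

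\textbf{Main obstacle.} The hard part will be the bookkeeping in the arrow-counting step: correctly determining, for each of the two Euler forms, exactly which of the two boundary arrows (at $k$ and at $j$) contributes to the correction term $\#(\{a: s(a) \in \text{supp}(V), t(a) \in \text{supp}(W)\} \setminus A)$, and verifying that the contributions add up to $0$ or $1$ rather than $2$. One must check that the two potential boundary arrows cannot simultaneously contribute in a single Euler form under the interlacing hypothesis — i.e.\ that the conditions pick out disjoint events — so that the Euler characteristic genuinely lands in $\{0,1\}$ and the clean iff-statements hold. The interlacing condition $i < k < j < \ell$ is precisely what forces each boundary arrow to be oriented in the unique way that makes the dichotomy work, so I would be careful to use this ordering explicitly when fixing the orientations $a_k^{\epsilon_k}$ and $a_j^{\epsilon_j}$ relative to the supports.
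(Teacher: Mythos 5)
Your overall strategy (compute both Euler forms via Lemma~\ref{eulerform} and combine with Remark~\ref{rem:hom0}) is close to what the paper does for parts $iii)$ and $iv)$, but it contains a genuine arithmetic error at its core, and even after correcting it a gap remains.

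The error: the correction term $\#\bigl(\{a : s(a) \in \text{supp}(X^\epsilon_{i,j}),\ t(a) \in \text{supp}(X^\epsilon_{k,\ell})\}\setminus A\bigr)$ is \emph{additive} over the two boundary arrows $a_k^{\epsilon_k}$ and $a_j^{\epsilon_j}$: under $i<k<j<\ell$ the arrow at $k$ contributes precisely when $\epsilon_k = +$ and the arrow at $j$ contributes precisely when $\epsilon_j = +$, so the count is $[\epsilon_k = +] + [\epsilon_j = +] \in \{0,1,2\}$ and $\langle \underline{\dim}(X^\epsilon_{i,j}), \underline{\dim}(X^\epsilon_{k,\ell})\rangle = 1 - [\epsilon_k=+] - [\epsilon_j=+]$. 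You claim the count equals $1$ exactly when $\epsilon_k = \epsilon_j = +$ and that the Euler form always lands in $\{0,1\}$; in fact when $\epsilon_k = \epsilon_j = +$ both arrows contribute, the Euler form is $-1$, and this negative value is precisely what forces $\Ext^1_{\Bbbk Q_\epsilon}(X^\epsilon_{i,j}, X^\epsilon_{k,\ell}) \neq 0$ in part $iii)$. The ``main obstacle'' you flag --- checking that the two boundary arrows cannot simultaneously contribute --- is a check that fails, and the lemma depends on its failing. With your stated values the Euler form $\langle X^\epsilon_{i,j}, X^\epsilon_{k,\ell}\rangle$ would equal $1$ whenever not both signs are $+$, forcing $\Hom_{\Bbbk Q_\epsilon}(X^\epsilon_{i,j},X^\epsilon_{k,\ell}) \neq 0$ in cases where part $i)$ asserts it vanishes.

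Even with the corrected values, there is a second gap: when exactly one of $\epsilon_k, \epsilon_j$ is $+$, both Euler forms are $0$, so each gives only $\dim\Hom = \dim\Ext^1$ in its direction. Remark~\ref{rem:hom0} tells you one of the two $\Hom$ spaces vanishes but not which, and nothing in your argument rules out $\dim\Hom = \dim\Ext^1 = 1$ in the other direction. The paper closes this by proving part $i)$ \emph{directly}: it analyzes a putative nonzero morphism $\theta : X^\epsilon_{i,j} \to X^\epsilon_{k,\ell}$ componentwise, using the commutation relations at the boundary arrows $a_k^{\epsilon_k}$ and $a_j^{\epsilon_j}$ to force $\epsilon_k = \epsilon_j = -$, and exhibits the morphism explicitly for the converse; only then does it deduce $iv)$ from $i)$, the Euler form, and an explicit nonsplit extension $0 \to X^\epsilon_{i,j} \to X^\epsilon_{i,\ell}\oplus X^\epsilon_{k,j} \to X^\epsilon_{k,\ell} \to 0$. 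To salvage your all-Euler-form route you would need an independent input such as the fact that $\Hom$ and $\Ext^1$ cannot both be nonzero between indecomposables over a representation-directed algebra, which you neither state nor prove.
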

\begin{proof}
We only prove $i)$ and $iv)$ as the proof of $ii)$ is very similar to that of $i)$, and the proof of $iii)$ is very similar to that of $iv)$. To prove $i)$, first assume there is a nonzero morphism $\theta: X^\epsilon_{i,j} \to X^\epsilon_{k,\ell}.$ Clearly, $\theta_s = 0$ if $s \not \in [k+1,j]$. If $\theta_s \neq 0$ for some $s \in [n]$, then $\theta_s = \lambda\cdot \text{id}_{\Bbbk}$ for some nonzero $\lambda \in \Bbbk$ (i.e. $\theta_s$ is a nonzero scalar transformation). As $\theta$ is a morphism of representations, for any $a \in (Q_\epsilon)_1$ the equality $\theta_{t(a)}\varphi^{i,j}_a = \varphi^{k,\ell}_a \theta_{s(a)}$ holds. Thus for any $a \in \{a_{k+1}^{\epsilon_{k+1}}, \ldots, a_{j - 1}^{\epsilon_{j - 1}}\}$, we have $\theta_{t(a)} = \theta_{s(a)}.$ As $\theta$ is nonzero, this implies that $\theta_s = \lambda\cdot \text{id}_{\Bbbk}$ for any $s \in [k+1,j]$. If $a = a_{k}^{\epsilon_k}$, then we have 
$$\begin{array}{rcl}
\theta_{t(a)}\varphi^{i,j}_a & = & \varphi^{k,\ell}_a\theta_{s(a)} \\
\theta_{t(a)} & = & 0.
\end{array}$$
Thus $\epsilon_k = -$. Similarly, $\epsilon_j = -.$ 

Conversely, it is easy to see that if $\epsilon_k = \epsilon_j = -,$ then $\theta: X^\epsilon_{i,j} \to X^\epsilon_{k,\ell}$ defined by $\theta_s = 0$ if $s \not \in [k+1,j]$ and $\theta_s = 1$ otherwise is a nonzero morphism.

Next, we prove $iv)$. Observe that by Lemma~\ref{eulerform} we have
$$\begin{array}{rcl}
\dim\text{Ext}^1_{\Bbbk Q_\epsilon}(X^\epsilon_{k,\ell}, X^{\epsilon}_{i,j}) & = & \dim\text{Hom}_{\Bbbk Q_\epsilon}(X^\epsilon_{k,\ell},X^\epsilon_{i,j}) -\langle \underline{\dim}(X^\epsilon_{k,\ell}), \underline{\dim}(X^\epsilon_{i,j})\rangle \\
& = & \dim\text{Hom}_{\Bbbk Q_\epsilon}(X^\epsilon_{k,\ell},X^\epsilon_{i,j}) - 1\\
& & + \#\left(\{b \in (Q_\epsilon)_1: s(b) \in \text{supp}(X^\epsilon_{k,\ell}), \ t(b) \in \text{supp}(X^\epsilon_{i,j})\}\backslash A\right). 
\end{array}$$
Note that $\#\left(\{b \in (Q_\epsilon)_1: s(b) \in \text{supp}(X^\epsilon_{k,\ell}), \ t(b) \in \text{supp}(X^\epsilon_{i,j})\}\backslash A\right) \le 2$ with equality if and only if $\epsilon_k = \epsilon_j = -.$ 

Suppose $\epsilon_k = \epsilon_j = -$. Then by $i)$, we have that $\text{Hom}_{\Bbbk Q_\epsilon}(X_{i,j}^\epsilon, X_{k,\ell}^\epsilon) \neq 0$ so $\text{Hom}_{\Bbbk Q_\epsilon}(X_{k,\ell}^\epsilon, X_{i,j}^\epsilon) = 0$. This means 
$$\begin{array}{rcl}
\dim\text{Ext}^1_{\Bbbk Q_\epsilon}(X_{k,\ell}^\epsilon,X_{i,j}^\epsilon) & = & \#\left(\{b \in (Q_\epsilon)_1: s(b) \in \text{supp}(X^\epsilon_{k,\ell}), \ t(b) \in \text{supp}(X^\epsilon_{i,j})\}\backslash A\right) - 1\\
& = & 1.
\end{array}$$

Conversely, suppose $\text{Ext}^1_{\Bbbk Q_\epsilon}(X_{k,\ell}^\epsilon, X_{i,j}^\epsilon) \neq 0$. Thus, one checks that there is a nonsplit extension $$0 \longrightarrow X^\epsilon_{i,j} \stackrel{f}{\longrightarrow} X^\epsilon_{i,\ell} \oplus X^\epsilon_{k,j} \stackrel{g}{\longrightarrow} X^\epsilon_{k,\ell} \longrightarrow 0.$$ This implies that $\text{Hom}_{\Bbbk Q_\epsilon}(X_{k,\ell}^\epsilon, X_{i,j}^\epsilon) = 0$, since the composition $h: X^\epsilon_{i,j} \stackrel{f_1}{\to} X^\epsilon_{i,\ell} \stackrel{g_1}{\to} X_{k,\ell}^\epsilon$ is nonzero. Using again that $\text{dim}\text{Ext}^1_{\Bbbk Q_\epsilon}(X_{k,\ell}^\epsilon, X_{i,j}^\epsilon) \neq 0$, the formula above for this dimension tells us that $\epsilon_k = \epsilon_j = -$.

\end{proof}

\begin{lemma}\label{nested}
Let $X^\epsilon_{k,\ell}, X^\epsilon_{i,j} \in \text{ind}(\text{rep}_\Bbbk(Q_\epsilon))$. Assume $0 \le i < k < \ell < j \le n$.

$\begin{array}{rll}
i) & \text{$\Hom_{\Bbbk Q_\epsilon}(X^\epsilon_{i,j}, X^\epsilon_{k,\ell}) \neq 0$ if and only if $\epsilon_k = -$ and $\epsilon_\ell = +$.}\\
ii) & \text{$\Hom_{\Bbbk Q_\epsilon}(X^\epsilon_{k,\ell}, X^\epsilon_{i,j}) \neq 0$ if and only if $\epsilon_k = +$ and $\epsilon_\ell = -$.}\\
iii) & \text{$\Ext^1_{\Bbbk Q_\epsilon}(X^\epsilon_{i,j}, X^\epsilon_{k,\ell}) \neq 0$ if and only if $\epsilon_k = +$ and $\epsilon_\ell = -$.}\\
iv) & \text{$\Ext^1_{\Bbbk Q_\epsilon}(X^\epsilon_{k,\ell}, X^\epsilon_{i,j}) \neq 0$ if and only if $\epsilon_k = -$ and $\epsilon_\ell = +$.}
\end{array}$
\end{lemma}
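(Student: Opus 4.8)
The plan is to mimic exactly the proof of Lemma~\ref{interlaced}, since the ``nested'' configuration $0 \le i < k < \ell < j \le n$ differs from the ``interlaced'' configuration only in where the two strings of $1$'s begin and end. As before, I would prove parts $i)$ and $iv)$ in full and remark that $ii)$ is obtained from $i)$ and $iii)$ from $iv)$ by the obvious symmetry (interchanging the roles of the two representations, which swaps $+$ and $-$ at the relevant vertices). The key observation is that now $\text{supp}(X^\epsilon_{i,j}) = [i+1,j]$ \emph{contains} $\text{supp}(X^\epsilon_{k,\ell}) = [k+1,\ell]$, so the intersection is all of $[k+1,\ell]$, and the arrows that can fail to lie in $A$ are precisely $a_k^{\epsilon_k}$ and $a_\ell^{\epsilon_\ell}$ at the two boundary vertices $k$ and $\ell$.

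For part $i)$, I would argue as in Lemma~\ref{interlaced}: a nonzero morphism $\theta : X^\epsilon_{i,j} \to X^\epsilon_{k,\ell}$ must have $\theta_s = 0$ for $s \notin [k+1,\ell]$ and $\theta_s = \lambda\cdot\text{id}_\Bbbk$ (a common nonzero scalar) for $s \in [k+1,\ell]$, using the commutativity relations $\theta_{t(a)}\varphi^{i,j}_a = \varphi^{k,\ell}_a\theta_{s(a)}$ along the internal arrows. Examining the boundary arrow $a_k^{\epsilon_k}$ connecting vertex $k$ (outside the support of $X^\epsilon_{k,\ell}$) to vertex $k+1$ forces $\epsilon_k = -$ (so that the arrow points out of the support of $X^\epsilon_{k,\ell}$, making the relation consistent with $\theta_{k+1}\neq 0$), and examining $a_\ell^{\epsilon_\ell}$ connecting vertex $\ell$ to vertex $\ell+1$ forces $\epsilon_\ell = +$. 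Conversely, when $\epsilon_k = -$ and $\epsilon_\ell = +$ the map given by $\theta_s = 1$ on $[k+1,\ell]$ and $0$ elsewhere is checked to be a well-defined nonzero morphism. This is the step requiring the most care: one must be attentive to the directions of the two boundary arrows and confirm the commutativity relations hold exactly when $\epsilon_k=-,\epsilon_\ell=+$, since the signs here differ from the interlaced case.

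For part $iv)$, I would again invoke Lemma~\ref{eulerform} to compute
$$\dim\Ext^1_{\Bbbk Q_\epsilon}(X^\epsilon_{k,\ell},X^\epsilon_{i,j}) = \dim\Hom_{\Bbbk Q_\epsilon}(X^\epsilon_{k,\ell},X^\epsilon_{i,j}) - \langle\underline{\dim}(X^\epsilon_{k,\ell}),\underline{\dim}(X^\epsilon_{i,j})\rangle.$$
Since the support of $X^\epsilon_{k,\ell}$ is contained in that of $X^\epsilon_{i,j}$, the intersection is nonempty, so the characteristic term contributes $1$, and the count of arrows with source in $\text{supp}(X^\epsilon_{k,\ell})$ and target in $\text{supp}(X^\epsilon_{i,j})$ but not in $A$ is again at most $2$, with equality exactly at the two boundary arrows when $\epsilon_k=-$ and $\epsilon_\ell=+$. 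Assuming $\epsilon_k=-,\epsilon_\ell=+$, part $i)$ gives a nonzero $\Hom$ in the opposite direction, forcing $\Hom_{\Bbbk Q_\epsilon}(X^\epsilon_{k,\ell},X^\epsilon_{i,j})=0$, so the dimension formula yields $\dim\Ext^1 = 1$. For the converse I would exhibit the nonsplit short exact sequence
$$0 \longrightarrow X^\epsilon_{i,j} \longrightarrow X^\epsilon_{i,\ell}\oplus X^\epsilon_{k,j} \longrightarrow X^\epsilon_{k,\ell}\longrightarrow 0,$$
whose existence forces $\Hom_{\Bbbk Q_\epsilon}(X^\epsilon_{k,\ell},X^\epsilon_{i,j})=0$ and hence, via the dimension formula, $\epsilon_k=-,\epsilon_\ell=+$. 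The main obstacle is purely bookkeeping: verifying that the boundary-arrow sign conditions flip correctly relative to the interlaced case, so that the roles of $\epsilon_j$ there are played by $\epsilon_\ell$ here, and that the middle term of the nonsplit extension is the correct direct sum of indecomposables for the nested configuration.
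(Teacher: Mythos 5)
Your proposal is correct and is exactly the adaptation the paper intends: the paper omits the proof of Lemma~\ref{nested}, stating only that it follows by the same technique as Lemma~\ref{interlaced}, and your argument carries that technique over with the right bookkeeping (the boundary vertices are now $k$ and $\ell$, the sign condition at the right boundary flips to $\epsilon_\ell=+$ because $\ell$ is the right endpoint of the \emph{inner} support, and the middle term $X^\epsilon_{i,\ell}\oplus X^\epsilon_{k,j}$ of the nonsplit extension is again correct since its dimension vector equals $\underline{\dim}(X^\epsilon_{i,j})+\underline{\dim}(X^\epsilon_{k,\ell})$). I verified the arrow-counting in Lemma~\ref{eulerform} for this configuration ($a_k^{\epsilon_k}$ contributes iff $\epsilon_k=-$ and $a_\ell^{\epsilon_\ell}$ contributes iff $\epsilon_\ell=+$), so the dimension computation and the use of Remark~\ref{rem:hom0} go through as you describe.
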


\begin{lemma}\label{shareendpt}
Assume $0 \le i < k < j \le n$. Then

$\begin{array}{rll}
i) & \text{$\Hom_{\Bbbk Q_\epsilon}(X^\epsilon_{i,k}, X^\epsilon_{k,j}) = 0$ and $\Hom_{\Bbbk Q_\epsilon}(X^\epsilon_{k,j}, X^\epsilon_{i,k}) = 0$.}\\
ii) & \text{$\Ext^1_{\Bbbk Q_\epsilon}(X^\epsilon_{i,k}, X^\epsilon_{k,j}) \neq 0$ if and only if $\epsilon_k = +$.}\\
iii) & \text{$\Ext^1_{\Bbbk Q_\epsilon}(X^\epsilon_{k,j}, X^\epsilon_{i,k}) \neq 0$ if and only if $\epsilon_k = -$.}\\
iv) & \text{$\Hom_{\Bbbk Q_\epsilon}(X^\epsilon_{i,k}, X^\epsilon_{i,j}) \neq 0$ if and only if $\epsilon_k = -$.}\\
v) & \text{$\Hom_{\Bbbk Q_\epsilon}(X^\epsilon_{i,j}, X^\epsilon_{i,k}) \neq 0$ if and only if $\epsilon_k = +$.}\\
vi) & \Ext^1_{\Bbbk Q_\epsilon}(X^\epsilon_{i,k}, X^\epsilon_{i,j}) = 0 \text{ and } \Ext^1_{\Bbbk Q_\epsilon}(X^\epsilon_{i,j}, X^\epsilon_{i,k}) = 0.\\
vii) & \text{$\Hom_{\Bbbk Q_\epsilon}(X^\epsilon_{k,j}, X^\epsilon_{i,j}) \neq 0$ if and only if $\epsilon_k = +$.}\\
viii) & \text{$\Hom_{\Bbbk Q_\epsilon}(X^\epsilon_{i,j}, X^\epsilon_{k,j}) \neq 0$ if and only if $\epsilon_k = -$.}\\
ix) & \Ext^1_{\Bbbk Q_\epsilon}(X^\epsilon_{k,j}, X^\epsilon_{i,j}) = 0 \text{ and } \Ext^1_{\Bbbk Q_\epsilon}(X^\epsilon_{i,j},X^\epsilon_{k,j}) = 0.
\end{array}$
\end{lemma}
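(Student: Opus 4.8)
The plan is to reuse the two techniques from the proof of Lemma~\ref{interlaced}: a direct analysis of the components of a morphism for the $\Hom$ assertions, and the identity $\dim\Ext^1_{\Bbbk Q_\epsilon}(V,W)=\dim\Hom_{\Bbbk Q_\epsilon}(V,W)-\langle\underline{\dim}(V),\underline{\dim}(W)\rangle$ together with Lemma~\ref{eulerform} for the $\Ext^1$ assertions. The nine statements fall into two families. In $i)$--$iii)$ the representations are $X^\epsilon_{i,k}$ and $X^\epsilon_{k,j}$, whose supports $[i+1,k]$ and $[k+1,j]$ are disjoint and abut at the shared point $\epsilon_k$; in $iv)$--$ix)$ they are nested, with $\text{supp}(X^\epsilon_{i,k})\subset\text{supp}(X^\epsilon_{i,j})$ (sharing $\epsilon_i$) in $iv)$--$vi)$ and $\text{supp}(X^\epsilon_{k,j})\subset\text{supp}(X^\epsilon_{i,j})$ (sharing $\epsilon_j$) in $vii)$--$ix)$. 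In all nine cases the unique arrow straddling the boundary between the two supports is $a^{\epsilon_k}_k$ joining $k$ and $k+1$, so every sign condition is governed by $\epsilon_k$ alone.

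For $i)$ the supports are disjoint, so at every vertex at least one of the two representations vanishes; hence each component $\theta_s$ of a morphism in either direction has zero source or zero target, and both $\Hom$ spaces vanish with no case analysis. For the remaining $\Hom$ assertions $iv), v), vii), viii)$ I would copy the argument of Lemma~\ref{interlaced}$\,i)$: a nonzero morphism is a single scalar $\lambda$ on the (connected) overlap of the supports and $0$ elsewhere, and the only nontrivial instance of $\theta_{t(a)}\varphi_a=\varphi_a\theta_{s(a)}$ occurs at $a^{\epsilon_k}_k$. According as this arrow points out of or into the overlap, that relation either forces $\lambda=0$ or is vacuous. Running through the four combinations of morphism direction and orientation of $a^{\epsilon_k}_k$ yields the stated equivalences; for instance $\Hom_{\Bbbk Q_\epsilon}(X^\epsilon_{i,k},X^\epsilon_{i,j})\neq 0$ exactly when $a^{\epsilon_k}_k$ points from $k+1$ into $k$, i.e. $\epsilon_k=-$, while dually $\Hom_{\Bbbk Q_\epsilon}(X^\epsilon_{i,j},X^\epsilon_{i,k})\neq 0$ exactly when $\epsilon_k=+$.

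For the $\Ext^1$ assertions I would feed the $\Hom$ results into the Euler-form identity, using that these thin representations force $\dim\Hom\in\{0,1\}$. In $ii)$ and $iii)$ the supports are disjoint, so in Lemma~\ref{eulerform} one has $\chi=0$ and $A=\emptyset$, and the Euler form reduces to minus the number of boundary arrows running from the source support into the target support; together with $\Hom=0$ from $i)$ this gives $\dim\Ext^1_{\Bbbk Q_\epsilon}(X^\epsilon_{i,k},X^\epsilon_{k,j})=1$ iff $a^{\epsilon_k}_k$ points $k\to k+1$ (i.e. $\epsilon_k=+$) and $\dim\Ext^1_{\Bbbk Q_\epsilon}(X^\epsilon_{k,j},X^\epsilon_{i,k})=1$ iff $\epsilon_k=-$. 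In $vi)$ and $ix)$ the supports overlap in a nonempty interval, so $\chi=1$, and $a^{\epsilon_k}_k$ contributes to the subtracted count precisely when it points into the smaller support. The key observation is that in each direction this makes the Euler form equal to the $\Hom$-dimension already computed in $iv),v)$ (resp. $vii),viii)$), whence $\dim\Ext^1=\dim\Hom-\langle\underline{\dim}(V),\underline{\dim}(W)\rangle=0$.

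I expect no conceptual obstacle: granted Lemmas~\ref{eulerform} and~\ref{interlaced}, the arguments are routine, and the real work is organizational, namely keeping straight, for each of the eight nontrivial parts, the direction of the morphism or extension against the orientation of $a^{\epsilon_k}_k$ and the two possible inclusions of supports. The one point that must be verified rather than asserted is the exact cancellation in $vi)$ and $ix)$: one has to check, for \emph{both} orientations of $a^{\epsilon_k}_k$ and in \emph{both} directions, that the Euler form coincides with the $\Hom$-dimension, so that the computed $\Ext^1$ is genuinely $0$ and not merely of nonnegative expected dimension. One could instead try to argue directly that any extension with these nested end terms splits, since the nesting leaves no room for a nonsplit thin middle term, but the Euler-form cancellation is cleaner and parallels Lemma~\ref{interlaced} most closely.
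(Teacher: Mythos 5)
Your proposal is correct and follows exactly the strategy the paper intends: the paper omits the proof of Lemma~\ref{shareendpt}, stating only that it is proved by the same techniques as Lemma~\ref{interlaced}, namely the direct component-by-component analysis of morphisms at the single boundary arrow $a_k^{\epsilon_k}$ for the $\Hom$ statements and the identity $\dim\Ext^1=\dim\Hom-\langle\underline{\dim}(V),\underline{\dim}(W)\rangle$ combined with Lemma~\ref{eulerform} for the $\Ext^1$ statements. Your case analysis (including the observation that $\dim\Hom\in\{0,1\}$ for these thin modules, which is needed for the exact cancellations in $vi)$ and $ix)$) checks out.
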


\begin{lemma}\label{separate}
Let $X^\epsilon_{k,\ell}, X^\epsilon_{i,j} \in \text{ind}(\text{rep}_\Bbbk(Q_\epsilon))$. Assume $0 \le i < j < k < \ell \le n$. Then 

$\begin{array}{rl}
i) & \text{$\Hom_{\Bbbk Q_\epsilon}(X^\epsilon_{i,j},X^\epsilon_{k,\ell}) = 0,$ $\Hom_{\Bbbk Q_\epsilon}(X^\epsilon_{k,\ell},X^\epsilon_{i,j}) = 0,$}\\
ii) & \text{$\Ext^1_{\Bbbk Q_\epsilon}(X^\epsilon_{i,j},X^\epsilon_{k,\ell}) = 0,$ $\Ext^1_{\Bbbk Q_\epsilon}(X^\epsilon_{k,\ell},X^\epsilon_{i,j}) = 0$.}
\end{array}$
\end{lemma}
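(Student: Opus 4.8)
The plan is to exploit the fact that the hypothesis $0\le i<j<k<\ell\le n$ places the two indecomposables in the ``separated'' configuration, in which their supports are disjoint and no arrow of $Q_\epsilon$ joins them. Recall that $\text{supp}(X^\epsilon_{i,j})=[i+1,j]$ and $\text{supp}(X^\epsilon_{k,\ell})=[k+1,\ell]$. Since $j<k$ we have $j<k+1$, so these two intervals are disjoint; moreover the vertex $k$ satisfies $j\le k-1<k<k+1$, so it lies strictly between the intervals and belongs to neither support. Consequently no arrow of $Q_\epsilon$ (which only ever connects consecutive vertices) can have one endpoint in each support.

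For part $i)$ I would argue directly from the definition of a morphism. Any $\theta\colon X^\epsilon_{i,j}\to X^\epsilon_{k,\ell}$ consists of linear maps $\theta_s\colon (X^\epsilon_{i,j})_s\to(X^\epsilon_{k,\ell})_s$ for each vertex $s$. Because the supports are disjoint, for every $s$ at least one of the two vector spaces $(X^\epsilon_{i,j})_s$ and $(X^\epsilon_{k,\ell})_s$ is zero, which forces $\theta_s=0$. Hence $\theta=0$, and the same argument with the two representations interchanged gives the other vanishing. This settles both Hom statements.

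For part $ii)$ I would combine the Euler-form computation with the hereditary identity. By Lemma~\ref{eulerform}, $\langle\underline{\dim}(X^\epsilon_{k,\ell}),\underline{\dim}(X^\epsilon_{i,j})\rangle=\chi_{\text{supp}(X^\epsilon_{k,\ell})\cap\text{supp}(X^\epsilon_{i,j})}-\#(\cdots)$, where the first term is $0$ because the supports are disjoint, and the correction term counts arrows with source in one support and target in the other; since no arrow of $Q_\epsilon$ has endpoints in both supports, this set is empty, so the Euler form vanishes. The identical computation with the arguments swapped shows the Euler form vanishes in the other order as well. Since $Q_\epsilon$ is hereditary, $\langle\underline{\dim}(V),\underline{\dim}(W)\rangle=\dim\Hom_{\Bbbk Q_\epsilon}(V,W)-\dim\Ext^1_{\Bbbk Q_\epsilon}(V,W)$; feeding in the Hom-vanishing from part $i)$ forces $\dim\Ext^1_{\Bbbk Q_\epsilon}=0$ in each order, completing the proof.

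There is essentially no serious obstacle here: this is the degenerate case of Lemmas~\ref{interlaced}--\ref{shareendpt} in which the two intervals neither overlap, nest, nor share an endpoint, and all four homological quantities vanish for the trivial reason that the two representations do not interact. The only point requiring a moment's care is verifying that no arrow connects the two supports, and this is precisely where the strict inequality $j<k$ (rather than $j\le k$) is used, since it guarantees the separating gap vertex $k$.
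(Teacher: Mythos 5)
Your proof is correct and uses exactly the techniques the paper intends for this lemma: the paper omits the proof of Lemma~\ref{separate}, stating only that it follows by arguments very similar to the proof of Lemma~\ref{interlaced}, and your argument (direct vertexwise vanishing of morphisms for the $\Hom$ statements, then Lemma~\ref{eulerform} plus the hereditary identity $\langle \underline{\dim}(V),\underline{\dim}(W)\rangle = \dim \Hom_{\Bbbk Q_\epsilon}(V,W) - \dim \Ext^1_{\Bbbk Q_\epsilon}(V,W)$ for the $\Ext^1$ statements) is precisely that argument, correctly executed. Your observation that the strict inequality $j<k$ is what prevents any arrow from joining the two supports is the right key point.
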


Next, we present three geometric facts about pairs of distinct strands. These geometric facts will be crucial in our proof of Lemma~\ref{maintechlemma}.

\begin{lemma}\label{uniqcross}
If two distinct strands $c(i_1,j_1)$ and $c(i_2,j_2)$ on $\mathcal{S}_{n,\epsilon}$ intersect nontrivially, then $c(i_1,j_1)$ and $c(i_2,j_2)$ can be represented by a pair of {monotone} curves that have a unique transversal crossing.
\end{lemma}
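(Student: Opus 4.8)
The statement to prove is Lemma~\ref{uniqcross}: if two distinct strands $c(i_1,j_1)$ and $c(i_2,j_2)$ on $\mathcal{S}_{n,\epsilon}$ intersect nontrivially, then they can be represented by a pair of monotone curves with a \emph{unique} transversal crossing. The plan is to exploit monotonicity heavily. By the Remark following the definition of a strand, each strand can be represented by a monotone curve, so fix monotone representatives $\gamma_1 \in c(i_1,j_1)$ and $\gamma_2 \in c(i_2,j_2)$, parameterized so that the first coordinate is strictly increasing. First I would reduce to the situation where the curves are graphs of functions: writing $\gamma_\ell$ as the graph of a continuous function $f_\ell$ over the $x$-interval $[x_{i_\ell}, x_{j_\ell}]$ (this is exactly what monotonicity buys us), a transversal crossing of $\gamma_1$ and $\gamma_2$ occurs precisely at a point $x$ in the common interval $I := [x_{i_1},x_{j_1}] \cap [x_{i_2},x_{j_2}]$ where the sign of $f_1(x) - f_2(x)$ changes.

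The key structural input is that the endpoints of both strands lie among the fixed marked points $\epsilon_0,\dots,\epsilon_n$, which sit on a horizontal line, together with the local above/below conditions from condition $c)$ of the strand definition (a strand passes below a $+$ point and above a $-$ point). The main step is to show that on the common interval $I$ the difference $f_1 - f_2$ changes sign at most once, which forces the number of transversal crossings to be at most one; nontriviality of the intersection then forces it to be exactly one. I would organize the argument by the relative position of the four endpoints $\epsilon_{i_1},\epsilon_{j_1},\epsilon_{i_2},\epsilon_{j_2}$ on the line, which fall into the same case types already used in Lemmas~\ref{interlaced}, \ref{nested}, \ref{shareendpt}, and \ref{separate}: separated, nested, interlaced, or sharing an endpoint. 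In the separated case $I$ is empty or a single point, so no nontrivial crossing occurs. In the shared-endpoint case, the clockwise/counterclockwise analysis shows the strands can be pulled apart near the shared point, again yielding no transversal crossing after isotopy. The genuine content is in the interlaced and nested cases, where $I$ has nonempty interior and at the two ends of $I$ the relative vertical order of the two curves is pinned down either by an endpoint lying on the marked line or by the local above/below condition at a common intermediate marked point; since at each end of $I$ the sign of $f_1 - f_2$ is determined, and any two sign changes could be removed by an isotopy that pushes one curve past the other between consecutive crossings without violating conditions $a)$--$c)$, we may choose representatives with a single sign change.

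The cleanest way to run the counting step is an isotopy/innermost-bigon argument: if the chosen monotone representatives cross transversally $m \ge 2$ times, then two consecutive crossings (consecutive in the $x$-ordering, which is well-defined by monotonicity) bound a bigon region between the two curves containing no marked point in its interior. Because no marked point $\epsilon_k$ lies strictly inside this region, conditions $a)$--$c)$ impose no obstruction to an isotopy that eliminates the bigon by sliding one arc across it, strictly reducing the crossing number while keeping both curves monotone and keeping them inside the allowed vertical strips minus the forbidden marked points. Iterating removes all but possibly one crossing. The point where I expect the most care is verifying that the bigon contains no marked point and that the bigon-removing isotopy preserves the monotone-curve representative together with the locally-above/locally-below constraints of condition $c)$; this is where the specific geometry of $\mathcal{S}_{n,\epsilon}$ (all marked points collinear, endpoints fixed) does the work. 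I would conclude that after finitely many such isotopies we obtain monotone representatives with at most one crossing, and since the strands intersect nontrivially by hypothesis, exactly one transversal crossing remains.
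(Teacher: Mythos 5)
Your proposal is correct, and it reaches the conclusion by a genuinely different route than the paper. The paper's proof anchors itself at one chosen transversal crossing $(x',y')$ and then performs two explicit separating isotopies: on the left of $x'$ it pushes $\gamma_1$ strictly above or below $\gamma_2$ according to the sign $\epsilon_{i_2}$ of the interior endpoint of the second strand, and on the right of $x'$ it pushes $\gamma_2$ strictly above or below $\gamma_1$ according to $\epsilon_{j_1}$; the unique crossing is then the one you started with. Your argument is instead an iterative innermost-bigon reduction in the spirit of putting two curves in minimal position: consecutive crossings of monotone representatives bound a bigon, the bigon contains no marked point, so it can be collapsed, and the crossing number drops by two until the hypothesis of nontrivial intersection pins it at exactly one. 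Each approach buys something: the paper's construction is shorter and tells you explicitly where the surviving crossing sits and which curve lies above on each side (information it immediately reuses in the proof of Lemma~\ref{maintechlemma}~a)), while yours is more robust and isolates the one real geometric fact, namely that no $\epsilon_k$ can lie in the interior of a bigon. You flag that fact as the delicate point but do not quite close it; for completeness note that it follows directly from condition $c)$ of the definition of a strand: if $\epsilon_k$ were interior to the bigon then $i_\ell < k < j_\ell$ for both strands, so both curves are forced weakly to the same side of $\epsilon_k$ at $x=x_k$ (both below if $\epsilon_k=+$, both above if $\epsilon_k=-$), contradicting $\epsilon_k$ lying strictly between the two boundary arcs. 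With that sentence added, your bigon collapse manifestly preserves conditions $a)$--$c)$ and monotonicity, and the proof is complete.
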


\begin{proof}
{Suppose $c(i_1,j_1)$ and $c(i_2,j_2)$ intersect nontrivially. Without loss of generality, we assume $i_1 \le i_2$. Let $\gamma_k \in c(i_k,j_k)$ with $k \in [2]$ be monotone curves. There are two cases: }

$\begin{array}{rcl}
a) & i_1 \le i_2 < j_1 \le j_2\\
b) & i_1 \le i_2 < j_2 \le j_1.
\end{array}$

Suppose that case $a)$ holds. Let $(x',y') \in \{(x,y) \in \mathbb{R}^2: x_{i_2} \le x \le x_{j_1}\}$ denote a point where $\gamma_1$ crosses $\gamma_2$ transversally. If $\epsilon_{i_2} = -$ (resp. $\epsilon_{i_2} = +$), isotope $\gamma_1$ relative to $\epsilon_{i_1}$ and $(x', y')$ in such a way that the monotonocity of $\gamma_1$ is preserved and so that $\gamma_1$ lies strictly above (resp. strictly below) $\gamma_2$ on $\{(x,y) \in \mathbb{R}^2: x_{i_2} \le x < x'\}.$ 

Next, if $\epsilon_{j_1} = -$ (resp. $\epsilon_{j_1} = +$), isotope $\gamma_2$ relative to $(x',y')$ and $\epsilon_{j_2}$ in such a way that the monotonicity of $\gamma_2$ is preserved and so that $\gamma_2$ lies strictly above (resp. strictly below) $\gamma_1$ on $\{(x,y) \in \mathbb{R}^2: x' < x \le x_{j_1}\}.$ This process produces two monotone curves $\gamma_1 \in c(i_1,j_1)$ and $\gamma_2 \in c(i_2,j_2)$ that have a unique transversal crossing. The proof in case $b)$ is very similar.\end{proof}

\begin{lemma}\label{nocommonendpt}
Let $c(i_1,j_1)$ and $c(i_2,j_2)$ be distinct strands on $\mathcal{S}_{n,\epsilon}$ that intersect nontrivially. Then $c(i_1,j_1)$ and $c(i_2,j_2)$ do not share an endpoint.
\end{lemma}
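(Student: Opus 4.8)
The plan is to fix the shared endpoint $\epsilon_k$ and split into cases according to whether $\epsilon_k$ is a left or a right endpoint of each of the two strands. Writing $c(i_1,j_1)$ and $c(i_2,j_2)$ with $i_\ell<j_\ell$, sharing an endpoint means $\{i_1,j_1\}\cap\{i_2,j_2\}\neq\emptyset$, so the possibilities are $i_1=i_2$ (common left endpoint), $j_1=j_2$ (common right endpoint), or one of $i_1=j_2$, $j_1=i_2$. In the last two cases the two strands are confined to the vertical strips $\{x_{i_1}\le x\le x_{j_1}\}$ and $\{x_{i_2}\le x\le x_{j_2}\}$, which meet only along the single line $x=x_k$; there the two curves sit at their respective endpoints, both equal to $\epsilon_k$, so the only point the strands can share is $\epsilon_k$ and there is no transversal crossing. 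Reflecting the plane in a vertical line interchanges left and right endpoints, so it remains to treat the common left endpoint case $i_1=i_2=:i$, say with $j_1<j_2$; the common right endpoint case is identical after this reflection.

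In this case I would argue constructively that non-crossing representatives exist. Choose monotone representatives $\gamma_1\in c(i,j_1)$ and $\gamma_2\in c(i,j_2)$. Since $\gamma_1$ lies in $\{x_i\le x\le x_{j_1}\}$, any transversal crossing must occur over the overlap interval $[x_i,x_{j_1}]$. The key observation is that on $[x_i,x_{j_1}]$ both strands obey exactly the same local conditions: at every point $\epsilon_m$ with $i\le m\le j_1$ each curve must pass strictly below $\epsilon_m$ if $\epsilon_m=+$ and strictly above it if $\epsilon_m=-$. (For $c(i,j_1)$ the point $\epsilon_{j_1}$ is an endpoint while for $c(i,j_2)$ it is an interior point to be avoided, but in both cases the curve is forced to the \emph{same} side of $\epsilon_{j_1}$.) I would therefore define $\gamma_2$ on $[x_i,x_{j_1}]$ to be a sufficiently small perturbation of $\gamma_1$ pushed onto the correct side of $\epsilon_{j_1}$, and then route $\gamma_2$ freely from $\epsilon_{j_1}$ to $\epsilon_{j_2}$ over $[x_{j_1},x_{j_2}]$, a region disjoint from $\gamma_1$. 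Because $\gamma_1$ stays a positive distance from each intermediate point $\epsilon_m$ and the perturbation vanishes at the shared endpoint $\epsilon_i$, a small enough perturbation keeps $\gamma_2$ on the correct side of every $\epsilon_m$, satisfies the local condition at $\epsilon_i$, and remains strictly separated from $\gamma_1$ except at $\epsilon_i$. This yields representatives meeting only at $\epsilon_k$, so the strands do not intersect nontrivially, contradicting the hypothesis.

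The step I expect to be the main obstacle is making the perturbation simultaneously compatible with the local constraint at the shared endpoint $\epsilon_i$ and with the side-of-$\epsilon_{j_1}$ constraint when $\epsilon_i$ and $\epsilon_{j_1}$ have opposite signs: pushing $\gamma_2$ to the correct side of $\epsilon_{j_1}$ might naively pull it to the wrong side of $\epsilon_i$. The resolution is that all the defining inequalities are strict away from the endpoints, so on any subinterval bounded away from $\epsilon_i$ the curve $\gamma_1$ lies a definite distance above or below each $\epsilon_m$; near $\epsilon_i$ the two curves coincide and both satisfy the endpoint condition there, so a perturbation whose size is controlled by the positive distance of $\gamma_1$ from the relevant horizontal levels keeps $\gamma_2$ on the correct side throughout. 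Making this bound uniform over $[x_i,x_{j_1}]$ is the only genuinely technical point; the remainder is bookkeeping of the case analysis.
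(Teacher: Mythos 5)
Your argument is correct and follows the same route as the paper, which simply asserts (in one sentence, with no details) that two strands sharing an endpoint admit representatives with no transversal crossing; your case analysis and perturbation construction supply exactly the details the paper omits. The only minor point worth tightening is that in the $i_1=j_2$ or $j_1=i_2$ cases you should explicitly take monotone representatives (as the paper's remark permits) so that each curve meets the common vertical line $x=x_k$ only at $\epsilon_k$.
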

\begin{proof}
Suppose $c(i_1,j_1)$ and $c(i_2,j_2)$ share an endpoint.  Thus, there exist curves $\gamma_k \in c(i_k,j_k)$ with $k \in \{1,2\}$ such that $\gamma_1$ and $\gamma_2$ are isotopic relative to their endpoints to curves with no transversal crossing.
\end{proof}

\begin{remark}\label{mononoint}
If $c(i_1, j_1)$ and $c(i_2, j_2)$ are two distinct strands on $\mathcal{S}_{n,\epsilon}$ that do not intersect nontrivially, then $c(i_1,j_1)$ and $c(i_2,j_2)$ can be represented by a pair of monotone curves $\gamma_\ell \in c(i_\ell, j_\ell)$ where $\ell \in [2]$ that are nonintersecting, except possibly at their endpoints.
\end{remark}

We now arrive at the proof of Lemma~\ref{maintechlemma}. The proof is a case by case analysis where the cases are given in terms of the entries of $\epsilon$ and the inequalities satisfied by the indices that describe a pair of indecomposable representations of $Q_\epsilon$.

\begin{proof}[Proof of Lemma~\ref{maintechlemma} a)]
Let $X^\epsilon_{i,j}:= U$ and $X^\epsilon_{k,\ell}:= V$. Assume that the strands $\Phi_{\epsilon}(X^\epsilon_{i,j})$ and $\Phi_{\epsilon}(X^\epsilon_{k,\ell})$ intersect nontrivially. By Lemma~\ref{nocommonendpt}, we can assume without loss of generality that either $0\le i < k < j < \ell \le n$ or $0 \le i < k < \ell < j \le n$. By Lemma~\ref{uniqcross}, we can represent $\Phi_{\epsilon}(X^\epsilon_{i,j})$ and $\Phi_{\epsilon}(X^\epsilon_{k,\ell})$ by monotone curves $\gamma_{i,j}$ and $\gamma_{k,\ell}$ that have a unique transversal crossing. Furthermore, we can assume that this unique crossing occurs between $\epsilon_k$ and $\epsilon_{k+1}$. There are four possible cases: 

$\begin{array}{rll}
i) & \epsilon_k = \epsilon_{k+1} = -,\\
ii) & \epsilon_k = - \text{ and } \epsilon_{k+1} = +, \\
iii) & \epsilon_k = \epsilon_{k+1} = +,\\
iv) & \epsilon_k = + \text{ and } \epsilon_{k+1} = -.
\end{array}$

\noindent We illustrate these cases up to isotopy in Figure~\ref{crossings}. We see that in cases $i)$ and $ii)$ (resp. $iii)$ and $iv)$) $\gamma_{k,\ell}$ lies 

\begin{figure}[h]
\begin{center}
\includegraphics[scale = 1.5]{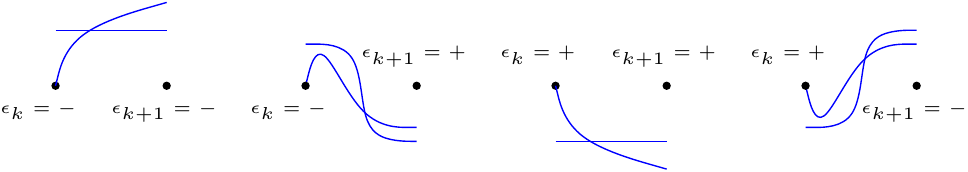}
\end{center}
\caption{The four types of crossings}
\label{crossings}
\end{figure}

\noindent above (resp. below) $\gamma_{i,j}$ inside of $\{(x,y) \in \mathbb{R}^2: x_{k+1} \le x \le x_{\text{min}\{\ell, j\}}\}$. 

Suppose $\gamma_{k,\ell}$ lies above $\gamma_{i,j}$ inside $\{(x,y) \in \mathbb{R}^2: x_{k+1} \le x \le x_{\text{min}\{\ell, j\}}\}$. Then 

$$\begin{array}{rcl}
\epsilon_{\text{min}\{\ell,j\}} & = & \left\{\begin{array}{rll} + & : & \text{min}\{\ell, j\} = \ell\\
- & : & \text{min}\{\ell,j\} = j\end{array}\right.
\end{array}$$
otherwise $\gamma_{k,\ell}$ and $\gamma_{i,j}$ would have a nonunique transversal crossing. If $\text{min}\{\ell,j\} = \ell,$ we have $0\le i < k < \ell < j \le n$, $\epsilon_k = -$, and $\epsilon_{\ell} = +.$ Now by Lemma~\ref{nested}, we have that $\Hom_{\Bbbk Q_\epsilon}(X^\epsilon_{i,j},X^\epsilon_{k,\ell}) \neq 0$ and $\Ext^1_{\Bbbk Q_\epsilon}(X^\epsilon_{k,\ell},X^\epsilon_{i,j}) \neq 0.$ If $\min\{\ell, j\} = j$, then $0 \le i < k < j < \ell \le n$, $\epsilon_k = -,$ and $\epsilon_j = -.$ Thus, by Lemma~\ref{interlaced}, we have that $\Hom_{\Bbbk Q_\epsilon}(X^\epsilon_{i,j},X^\epsilon_{k,\ell}) \neq 0$ and $\Ext^1_{\Bbbk Q_\epsilon}(X^\epsilon_{k,\ell},X^\epsilon_{i,j}) \neq 0.$

Similarly, if $\gamma_{i,j}$ lies above $\gamma_{k,\ell}$ inside $\{(x,y) \in \mathbb{R}^2: x_{k+1} \le x \le x_{\text{min}\{\ell, j\}}\}$, it follows that 

$$\begin{array}{rcl}
\epsilon_{\text{min}\{\ell,j\}} & = & \left\{\begin{array}{rll} - & : & \text{min}\{\ell, j\} = \ell\\
+ & : & \text{min}\{\ell,j\} = j.\end{array}\right.
\end{array}$$

\noindent If $\min\{\ell,j\} = \ell,$ then Lemma~\ref{nested} implies that $\Hom_{\Bbbk Q_\epsilon}(X^\epsilon_{k,\ell},X^\epsilon_{i,j}) \neq 0$ and $\Ext^1_{\Bbbk Q_\epsilon}(X^\epsilon_{i,j},X^\epsilon_{k,\ell}) \neq 0.$ If $\min\{\ell,j\} = j,$ then Lemma~\ref{interlaced} implies that $\Hom_{\Bbbk Q_\epsilon}(X^\epsilon_{k,\ell},X^\epsilon_{i,j}) \neq 0$ and $\Ext^1_{\Bbbk Q_\epsilon}(X^\epsilon_{i,j},X^\epsilon_{k,\ell}) \neq 0.$ Thus we conclude that neither $(X^\epsilon_{i,j},X^\epsilon_{k,\ell})$ nor $(X^\epsilon_{k,\ell},X^\epsilon_{i,j})$ are exceptional pairs.

Conversely, assume that neither $(U,V)$ nor $(V,U)$ are exceptional pairs where $X^\epsilon_{i,j} := U$ and $X^\epsilon_{k,\ell} := V$. Then at least one of the following is true:

$\begin{array}{rll}
a) & \text{Hom}_{\Bbbk Q_\epsilon}(X^\epsilon_{i,j}, X^\epsilon_{k,\ell}) \neq 0 \text{ and } \text{Hom}_{\Bbbk Q_\epsilon}(X^\epsilon_{k,\ell}, X^\epsilon_{i,j}) \neq 0,\\
b) & \text{Hom}_{\Bbbk Q_\epsilon}(X^\epsilon_{i,j}, X^\epsilon_{k,\ell}) \neq 0 \text{ and } \text{Ext}^1_{\Bbbk Q_\epsilon}(X^\epsilon_{k,\ell},X^\epsilon_{i,j})\neq 0, \\
c) & \text{Ext}^1_{\Bbbk Q_\epsilon}(X^\epsilon_{i,j}, X^\epsilon_{k,\ell}) \neq 0 \text{ and } \text{Hom}_{\Bbbk Q_\epsilon}(X^\epsilon_{k,\ell}, X^\epsilon_{i,j}) \neq 0,\\
d) & \text{Ext}^1_{\Bbbk Q_\epsilon}(X^\epsilon_{i,j}, X^\epsilon_{k,\ell}) \neq 0 \text{ and } \text{Ext}^1_{\Bbbk Q_\epsilon}(X^\epsilon_{k,\ell}, X^\epsilon_{i,j})\neq 0. 
\end{array}$

As $X^\epsilon_{i,j}$ and $X^\epsilon_{k,\ell}$ are indecomposable and distinct, we have that $\text{Hom}_{\Bbbk Q_\epsilon}(X^\epsilon_{i,j}, X^\epsilon_{k,\ell}) = 0$ or $\text{Hom}_{\Bbbk Q_\epsilon}(X^\epsilon_{k,\ell},X^\epsilon_{i,j}) = 0$ by Remark \ref{rem:hom0}.  Without loss of generality, assume that $\text{Hom}_{\Bbbk Q_\epsilon}(X^\epsilon_{k,\ell},X^\epsilon_{i,j}) = 0.$ Thus $b)$ or $d)$ hold so $\text{Ext}^1_{\Bbbk Q_\epsilon}(X^\epsilon_{k,\ell}, X^\epsilon_{i,j})\neq 0$. Then Lemma~\ref{shareendpt} and Lemma~\ref{separate} imply that $0 \le i < k < j < \ell \le n$ or $0 \le i < k < \ell < j \le n.$

If $0 \le i < k < j < \ell < n$, we have $\epsilon_k = \epsilon_j = -$ by Lemma~\ref{interlaced} as $\Hom_{\Bbbk Q_\epsilon}(X^\epsilon_{i,j},X_{k,\ell}^\epsilon) \neq 0$ and $\Ext^1_{\Bbbk Q_\epsilon}(X^\epsilon_{k,\ell},X^\epsilon_{i,j}) \neq 0.$  Let $\gamma_{i,j} \in \Phi_{\epsilon}(X^\epsilon_{i,j})$ and $\gamma_{k,\ell} \in \Phi_{\epsilon}(X^\epsilon_{k,\ell})$. We can assume that there exists $\delta(k) > 0$ such that $\gamma_{i,j}$ and $\gamma_{k,\ell}$ have no transversal crossing inside $\{(x,y) \in \mathbb{R}^2: x_k \le x \le x_k + \delta(k)\}.$ This implies that $\gamma_{i,j}$ lies above $\gamma_{k,\ell}$ inside $\{(x,y) \in \mathbb{R}^2: x_k\le x\le x_k + \delta(k)\}$. Similarly, we can assume there exists $\delta(j) > 0 $ such that $\gamma_{i,j}$ and $\gamma_{k,\ell}$ have no transversal crossing inside $\{(x,y) \in \mathbb{R}^2: x_j - \delta(j) \le x \le x_j\}$. Thus $\gamma_{i,j}$ lies below $\gamma_{k,\ell}$ inside $\{(x,y) \in \mathbb{R}^2: x_j - \delta(j) \le x \le x_j\}.$ This means $\gamma_{i,j}$ and $\gamma_{k,\ell}$ must have at least one transversal crossing. Thus $\Phi_{\epsilon}(X^\epsilon_{i,j})$ and $\Phi_{\epsilon}(X^\epsilon_{k,\ell})$ intersect nontrivially. An analogous argument shows that if $0 \le i < k < \ell < j \le n$, then $\Phi_{\epsilon}(X^\epsilon_{i,j})$ and $\Phi_{\epsilon}(X^\epsilon_{k,\ell})$ intersect nontrivially.
\end{proof}

\begin{proof}[Proof of Lemma~\ref{maintechlemma} b)]
Assume that $\Phi_{\epsilon}(U)$ is clockwise from $\Phi_{\epsilon}(V)$. Then we have that one of the following holds:

$\begin{array}{rll}
a) & X^\epsilon_{k,j}= U \text{ and } X^\epsilon_{i,k}= V \text{ for some } 0 \le i < k < j \le n,\\
b) & X^\epsilon_{i,k}= U \text{ and } X^\epsilon_{k,j}= V \text{ for some } 0 \le i < k < j \le n,\\
c) & X^\epsilon_{i,j}= U \text{ and } X^\epsilon_{i,k} = V \text{ for some } 0 \le i < j \le n \text{ and } 0 \le i < k \le n,\\
d) & X^\epsilon_{i,j}= U \text{ and } X^\epsilon_{k,j}= V \text{ for some } 0 \le i < j \le n \text{ and } 0 \le k < j \le n.
\end{array}$

In Case $a)$, we have that $\epsilon_k = -$ since $\Phi_{\epsilon}(X^\epsilon_{k,j})$ is clockwise from $\Phi_{\epsilon}(X^\epsilon_{i,k})$. By Lemma~\ref{shareendpt} $i)$ and $ii)$, we have that $\Hom_{\Bbbk Q_\epsilon}(X^\epsilon_{i,k},X^\epsilon_{k,j}) = 0$ and $\Ext^1_{\Bbbk Q_\epsilon}(X^\epsilon_{i,k},X^\epsilon_{k,j}) = 0.$ Thus $(X^\epsilon_{k,j}, X^\epsilon_{i,k})$ is an exceptional pair. By Lemma~\ref{shareendpt} $iii)$, we have that $\Ext^1_{\Bbbk Q_\epsilon}(X^\epsilon_{k,j}, X^\epsilon_{i,k}) \neq 0$. Thus $(X^\epsilon_{i,k},X^\epsilon_{k,j})$ is not an exceptional pair.

In Case $b)$, we have that $\epsilon_k = +$ since $\Phi_{\epsilon}(X^\epsilon_{i,k})$ is clockwise from $\Phi_{\epsilon}(X^\epsilon_{k,j})$. By Lemma~\ref{shareendpt} $i)$ and $iii)$, we have that $\Hom_{\Bbbk Q_\epsilon}(X^\epsilon_{k,j},X^\epsilon_{i,k}) = 0$ and $\Ext^1_{\Bbbk Q_\epsilon}(X^\epsilon_{k,j},X^\epsilon_{i,k}) = 0.$ Thus $(X^\epsilon_{i,k}, X^\epsilon_{k,j})$ is an exceptional pair. By Lemma~\ref{shareendpt} $ii)$, we have that $\Ext^1_{\Bbbk Q_\epsilon}(X^\epsilon_{i,k}, X^\epsilon_{k,j}) \neq 0$. Thus $(X^\epsilon_{k,j},X^\epsilon_{i,k})$ is not an exceptional pair.

In Case $c)$, if $j < k$, it follows that $\epsilon_j = -$. Indeed, since $\Phi_{\epsilon}(X^\epsilon_{i,j})$ and $\Phi_{\epsilon}(X^\epsilon_{i,k})$ share an endpoint, the two do not intersect nontrivially by Lemma~\ref{nocommonendpt}.  As $\Phi_{\epsilon}(X^\epsilon_{i,j})$ is clockwise from $\Phi_{\epsilon}(X^\epsilon_{i,k})$, Remark~\ref{mononoint} asserts that we can choose monotone curves $\gamma_{i,k} \in \Phi_{\epsilon}(X^\epsilon_{i,k})$ and $\gamma_{i,j} \in \Phi_{\epsilon}(X^\epsilon_{i,j})$ such that $\gamma_{i,k}$ lies strictly above $\gamma_{i,j}$ on $\{(x,y) \in \mathbb{R}^2: x_i < x \le x_j\}$. Thus $\epsilon_j = -.$  By Lemma~\ref{shareendpt} $v)$ and $vi)$, we have that $\Hom_{\Bbbk Q_\epsilon}(X^\epsilon_{i,k}, X^\epsilon_{i,j}) = 0$ and $\Ext^1_{\Bbbk Q_\epsilon}(X^\epsilon_{i,k}, X^\epsilon_{i,j}) = 0$ so that $(X^\epsilon_{i,j},X^\epsilon_{i,k})$ is an exceptional pair. By Lemma~\ref{shareendpt} $iv)$, we have that $\Hom_{\Bbbk Q_\epsilon}(X^\epsilon_{i,j},X^\epsilon_{i,k}) \neq 0$. Thus $(X^\epsilon_{i,k}, X^\epsilon_{i,j})$ is not an exceptional pair.

Similarly, one shows that if $k < j$, then $\epsilon_k = +$. By Lemma~\ref{shareendpt} $iv)$ and $vi)$, we have that $\Hom_{\Bbbk Q_\epsilon}(X^\epsilon_{i,k}, X^\epsilon_{i,j}) = 0$ and $\Ext^1_{\Bbbk Q_\epsilon}(X^\epsilon_{i,k}, X^\epsilon_{i,j}) = 0$ so that $(X^\epsilon_{i,j},X^\epsilon_{i,k})$ is an exceptional pair. By Lemma~\ref{shareendpt} $v)$, we have that $\Hom_{\Bbbk Q_\epsilon}(X^\epsilon_{i,j},X^\epsilon_{i,k}) \neq 0$. Thus $(X^\epsilon_{i,k}, X^\epsilon_{i,j})$ is not an exceptional pair. The proof in Case $d)$ is completely analogous to the proof in Case $c)$ so we omit it.

Conversely, let $U = X^\epsilon_{i,j}$ and $V = X^\epsilon_{k,\ell}$ and assume that $(X^\epsilon_{i,j}, X^\epsilon_{k,\ell})$ is an exceptional pair and $(X^\epsilon_{k,\ell}, X^\epsilon_{i,j})$ is not an exceptional pair. This implies that at least one of the following holds:

$\begin{array}{rll}
1) & \text{Hom}_{\Bbbk Q_\epsilon}(X^\epsilon_{k,\ell}, X^\epsilon_{i,j}) = 0, \text{Ext}^1_{\Bbbk Q_\epsilon}(X^\epsilon_{k,\ell}, X^\epsilon_{i,j}) = 0, \text{ and } \Hom_{\Bbbk Q_\epsilon}(X^\epsilon_{i,j},X^\epsilon_{k,\ell}) \neq 0,\\
2) & \text{Hom}_{\Bbbk Q_\epsilon}(X^\epsilon_{k,\ell}, X^\epsilon_{i,j}) = 0, \text{Ext}^1_{\Bbbk Q_\epsilon}(X^\epsilon_{k,\ell}, X^\epsilon_{i,j}) = 0, \text{ and } \Ext^1_{\Bbbk Q_\epsilon}(X^\epsilon_{i,j},X^\epsilon_{k,\ell}) \neq 0.
\end{array}$

\noindent By Lemma~\ref{separate}, we know that $[i,j]\cap[k,\ell] \neq \emptyset.$ This implies that either

$\begin{array}{rll}
i) & \Phi_{\epsilon}(X^\epsilon_{i,j}) \text{ and } \Phi_{\epsilon}(X^\epsilon_{k,\ell}) \text{ share an endpoint},\\
ii) & 0 \le i < k < j < \ell \le n,\\
iii) & 0 \le i < k < \ell < j \le n, \\
iv) & 0 \le k < i < \ell < j \le n,\\
v) & 0 \le k < i < j < \ell \le n.
\end{array}$

\noindent We will show that $\Phi_{\epsilon}(X^\epsilon_{i,j})$ and $\Phi_{\epsilon}(X^\epsilon_{k,\ell})$ share an endpoint. 

Suppose $0 \le i < k < j < \ell \le n.$ Then since $\text{Hom}_{\Bbbk Q_\epsilon}(X^\epsilon_{k,\ell}, X^\epsilon_{i,j}) = 0, \text{Ext}^1_{\Bbbk Q_\epsilon}(X^\epsilon_{k,\ell}, X^\epsilon_{i,j}) = 0$, we have by Lemma~\ref{interlaced} $ii)$ and $iv)$ that either $\epsilon_k = -$ and $\epsilon_j = +$ or $\epsilon_{k} = +$ and $\epsilon_j = -$. However, as $\Hom_{\Bbbk Q_\epsilon}(X^\epsilon_{i,j},X^\epsilon_{k,\ell}) \neq 0$ or $\Ext^1_{\Bbbk Q_\epsilon}(X^\epsilon_{i,j},X^\epsilon_{k,\ell}) \neq 0$, Lemma~\ref{interlaced} $i)$ and $iii)$ assert that $\epsilon_k = \epsilon_j = -$ or $\epsilon_k = \epsilon_j = +.$ This is a contradiction.  Thus, $i, j, k, \ell$ do not satisfy $0 \leq i < k < j < \ell \leq n$, and by a similar argument, they also do not satisfy $0 \le k < i < \ell < j \le n$.

Suppose $0 \le i < k < \ell < j \le n.$ Then since $\text{Hom}_{\Bbbk Q_\epsilon}(X^\epsilon_{k,\ell}, X^\epsilon_{i,j}) = 0, \text{Ext}^1_{\Bbbk Q_\epsilon}(X^\epsilon_{k,\ell}, X^\epsilon_{i,j}) = 0$, we have by Lemma~\ref{nested} $ii)$ and $iv)$ that either $\epsilon_k =\epsilon_\ell = +$ or $\epsilon_k = \epsilon_\ell = -$. However, as  $\Hom_{\Bbbk Q_\epsilon}(X^\epsilon_{i,j},X^\epsilon_{k,\ell}) \neq 0$ or $\Ext^1_{\Bbbk Q_\epsilon}(X^\epsilon_{i,j},X^\epsilon_{k,\ell}) \neq 0$, Lemma~\ref{nested} $i)$ and $iii)$ we have that $\epsilon_k = -$ and $\epsilon_\ell = +$ or $\epsilon_k = +$ and $\epsilon_\ell = -.$ This is a contradiction.  Thus, $i,j,k,\ell$ do not satisfy $0 \le i < k < \ell < j \le n,$ and by an analogous argument, they also do not satisfy $0 \le k < i < j < \ell \le n.$

We conclude that $\Phi_{\epsilon}(U)$ and $\Phi_{\epsilon}(V)$ share an endpoint. Thus we have that one of the following holds where we forget the previous roles played by $i,j,k$:

$\begin{array}{rll}
a) & X^\epsilon_{k,j}= U \text{ and } X^\epsilon_{i,k}= V \text{ for some } 0 \le i < k < j \le n,\\
b) & X^\epsilon_{i,k}= U \text{ and } X^\epsilon_{k,j}= V \text{ for some } 0 \le i < k < j \le n,\\
c) & X^\epsilon_{i,j}= U \text{ and } X^\epsilon_{i,k} = V \text{ for some } 0 \le i < j \le n \text{ and } 0 \le i < k \le n,\\
d) & X^\epsilon_{i,j}= U \text{ and } X^\epsilon_{k,j}= V \text{ for some } 0 \le i < j \le n \text{ and } 0 \le k < j \le n.
\end{array}$

Suppose Case $a)$ holds. Then since $(U,V)$ is an exceptional pair, we have $\Ext^1_{\Bbbk Q_\epsilon}(X^\epsilon_{i,k}, X^\epsilon_{k,j}) = 0.$ By Lemma~\ref{shareendpt} $ii)$, we have that $\epsilon_k = -$. Thus $\Phi_{\epsilon}(U)$ is clockwise from $\Phi_{\epsilon}(V)$.

Suppose Case $b)$ holds. Then since $(U,V)$ is an exceptional pair, we have $\Ext^1_{\Bbbk Q_\epsilon}(X^\epsilon_{k,j}, X^\epsilon_{i,k}) = 0.$ By Lemma~\ref{shareendpt} $iii)$, we have that $\epsilon_k = +$. Thus $\Phi_{\epsilon}(U)$ is clockwise from $\Phi_{\epsilon}(V)$.

Suppose Case $c)$ holds. Assume $k < j$. Then Lemma~\ref{shareendpt} $iv)$ and the fact that $\Hom_{\Bbbk Q_\epsilon}(X^\epsilon_{i,k},X^\epsilon_{i,j}) = 0$ imply that $\epsilon_k = +.$ Thus we have that $\Phi_{\epsilon}(U) = \Phi_{\epsilon}(X^\epsilon_{i,j})$ is clockwise from $\Phi_{\epsilon}(V) = \Phi_{\epsilon}(X^\epsilon_{i,k})$. Now suppose $j < k$. Then Lemma~\ref{shareendpt} $v)$ and $\Hom_{\Bbbk Q_\epsilon}(X^\epsilon_{i,k}, X^\epsilon_{i,j}) = 0$ imply that $\epsilon_j = -.$ Thus we have that $\Phi_{\epsilon}(U) = \Phi_{\epsilon}(X^\epsilon_{i,j})$ is clockwise from $\Phi_{\epsilon}(V) = \Phi_{\epsilon}(X^\epsilon_{i,k})$. The proof in Case $d)$ is very similar so we omit it.
\end{proof}

\begin{proof}[Proof of Lemma~\ref{maintechlemma} $c)$]
Observe that two strands $c(i_1,j_1)$ and $c(i_2, j_2)$ share an endpoint if and only if one of the two strands is clockwise from the other. Thus Lemma~\ref{maintechlemma} $a)$ and $b)$ implies that $\Phi_{\epsilon}(U)$ and $\Phi_{\epsilon}(V)$ do not intersect at any of their endpoints and they do not intersect nontrivially if and only if both $(U,V)$ and $(V,U)$ are exceptional pairs. 
\end{proof}

\section{Mixed cobinary trees}\label{sec:mct}

We recall the definition of an $\epsilon$-mixed cobinary tree and construct a bijection between the set of (isomorphism classes of) such trees and the set of maximal oriented strand diagrams on $\mathcal{S}_{n,\epsilon}.$

\begin{definition}[\cite{io13}]\label{def: MCT}
Given a sign function $\epsilon:[0,n]\to\{+,-\}$, an \textbf{$\epsilon$-mixed cobinary tree} (MCT) is a tree $T$ embedded in $\mathbb R^2$ with vertex set $\{(i,y_i): i\in[0,n]\}$ and edges straight line segments and satisfying the following conditions.

$\begin{array}{rl}
a) & \text{None of the edges is horizontal.}\\
b) & \text{If $\epsilon_i=+$ then $y_i\ge z$ for any $(i,z)\in T$. So, the tree goes under $(i,y_i)$.}\\
c) & \text{If $\epsilon_i=-$ then $y_i\le z$ for any $(i,z)\in T$. So, the tree goes over $(i,y_i)$.}\\
d) & \text{If $\epsilon_i=+$ then there is at most one edge descending from $(i,y_i)$ and}\\
&\text{at most two edges ascending from $(i,y_i)$ and not on the same side.}\\
e) & \text{If $\epsilon_i=-$ then there is at most one edge ascending from $(i,y_i)$ and}\\
&\text{at most two edges descending from $(i,y_i)$ and not on the same side.}\\
\end{array}$

Two MCTs $T,T'$ are \textbf{isomorphic} as MCTs if there is a graph isomorphism $T\cong T'$ which sends $(i,y_i)$ to $ (i,y_i')$ and so that corresponding edges have the same sign of their slopes.
\end{definition}

Given a MCT $T$, there is a partial ordering on $[0,n]$ given by $i<_Tj$ if the unique path from $(i,y_i)$ to $(j,y_j)$ in $T$ is monotonically increasing. Isomorphic MCTs give the same partial ordering by definition. Conversely, the partial ordering $<_T$ determines $T$ uniquely up to isomorphism since $T$ is the Hasse diagram of the partial ordering $<_T$. We sometimes decorate MCTs with \textbf{leaves} at vertices so that the result is \textbf{trivalent}, i.e., with three edges incident to each vertex. See, e.g., Figure \ref{figMCTa}. The ends of these leaves are not considered to be vertices. In that case, each vertex with $\epsilon=+$ forms a ``Y'' and this pattern is vertically inverted for $\epsilon=-$. The position of the leaves is uniquely determined.

\begin{figure}[ht]\label{fig:MCT example}
\begin{minipage}[b]{0.45\textwidth}
\begin{tikzpicture}
\begin{scope}
\draw (-1.8,1) node{ };
	\draw[ thick,color=blue] (0,0)--(1,1)--(3,0)--(4,1)(2,-1)--(3,0);
	\draw[fill] (4,1) circle[radius=2pt] node[right]{$(4,1)$} 
	(1,1)  circle[radius=2pt]node[above]{$(1,1)$}
	(0,0)  circle[radius=2pt]node[left]{$(0,0)$} 
	 (3,0)circle[radius=2pt](3,-.2)node[right]{$(3,0)$} 
	(2,-1) circle[radius=2pt]node[left]{$(2,-1)$};
\end{scope}
\end{tikzpicture}
	\caption{A MCT with $\epsilon_1=\epsilon_2=-$, $\epsilon_3=+$ and any value for $\epsilon_0$, $\epsilon_4$.}
	\label{figMCTb}
	\end{minipage}
	\begin{minipage}[b]{0.45\textwidth}
\begin{tikzpicture}[yscale=.75]
\draw (-2.3,1) node{ };
	\draw[ thick,color=blue] (0,0)--(1,1)--(2,2)--(3,3);
	\foreach\y in {0,2,3}\draw[thick,color=green] (\y,\y)--+(.4,-.4);
	\draw[thick,color=green] (3,3)--+(.4,.4) (0,0)--(-.4,-.4) (1,1)--+(-.4,.4);
	\foreach\x in {0,...,3}\draw[fill] (\x,\x) ellipse[x radius=2pt,y radius=3pt];
\end{tikzpicture}
	\caption{This MCT (in blue) has added green leaves showing that $\epsilon=(-,+,-,-)$.}
	\label{figMCTa}
	\end{minipage}
\end{figure}

In Figure \ref{figMCTa}, the four vertices have coordinates $(0,y_0),(1,y_1),(2,y_2),(3,y_3)$ where the $y_i$ can be any real numbers such that $y_0<y_1<y_2<y_3$. This inequality defines an open subset of $\mathbb R^4$ which is called the {region} of this tree $T$. More generally, for any MCT $T$, the \textbf{region} of $T$, denoted $\mathcal R_\epsilon(T)$, is the set of all points $y\in\mathbb R^{n+1}$ with the property that there exists a mixed cobinary tree $T'$ which is isomorphic to $T$ so that the vertex set of $T'$ is $\{(i,y_i): i\in[0,n]\}$.

\begin{theorem}[\cite{io13}]
Let $n \in \ZZ_{\geq 0}$ and $\epsilon:[0,n]\to \{+,-\}$ be fixed. Then, for every MCT $T$, the region $\mathcal R_\epsilon(T)$ is convex and nonempty. Furthermore, every point $y=(y_0,\ldots,y_n)$ in $\mathbb R^{n+1}$ with distinct coordinates lies in $\mathcal R_\epsilon(T)$ for a unique $T$ (up to isomorphism). In particular these regions are disjoint and their union is dense in $\mathbb R^{n+1}$.
\end{theorem}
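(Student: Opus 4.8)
The plan is to split the statement into its linear-algebraic content (nonemptiness and convexity) and its combinatorial content (the partition of the distinct-coordinate locus), and to obtain the latter by an induction that peels off the globally highest vertex.

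Nonemptiness is essentially free: by Definition~\ref{def: MCT} an MCT $T$ is \emph{given} as a tree embedded with vertex set $\{(i,y_i)\}$, so the very point $y=(y_0,\dots,y_n)$ witnessing $T$ already lies in $\mathcal R_\epsilon(T)$. For convexity I would show that, once the combinatorial type of $T$ is fixed — its edge set $E$ together with the sign of the slope of each edge — membership $y\in\mathcal R_\epsilon(T)$ is equivalent to a finite system of (strict) linear inequalities in the $y_i$. Indeed, for each edge $\{i,j\}\in E$ the sign of $y_j-y_i$ is prescribed (this also encodes condition (a), that no edge is horizontal), and for each vertex $k$ lying strictly between the endpoints of an edge $\{i,j\}\in E$ the conditions (b)/(c) read $y_k>z_k$ or $y_k<z_k$, where $z_k:=y_i+\frac{k-i}{j-i}(y_j-y_i)$ is the height of that edge over abscissa $k$, again affine in $y$. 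The observation that makes this clean is that the degree/side conditions (d) and (e) are \emph{purely combinatorial}: whether an incident edge ascends or descends, and whether it lies to the left or right of a vertex, is determined by $E$ and the slope signs alone, not by the particular $y$. Hence (d) and (e) hold for the embedding attached to $y$ if and only if they hold for $T$, so they impose no further constraint, and $\mathcal R_\epsilon(T)$ is an intersection of open half-spaces, hence convex.

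The heart of the matter is that each $y$ with distinct coordinates lies in exactly one region, which I would prove by induction on $n$ by peeling off the global maximum. Let $m$ be the index with $y_m$ maximal. If $\epsilon_m=+$, then (b) and (d) force $m$ to be a leaf: no edge can ascend from the global maximum, so by (d) the vertex $m$ carries a single descending edge. Deleting $m$ leaves a distinct-coordinate configuration on $[0,n]\setminus\{m\}$ with the restricted sign function, which by induction lies in a unique region, and any edge of that tree crossing position $m$ remains admissible after reinsertion because $y_m$ is the global maximum (so $y_m\ge z_m$ holds automatically). If instead $\epsilon_m=-$, condition (c) forbids any edge from passing over position $m$: such an edge would meet the line $x=m$ at a height $z_m$ strictly below $y_m$, whereas (c) demands $y_m\le z_m$. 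Thus $m$ is a cut vertex separating $[0,m-1]$ from $[m+1,n]$, and I would invoke the inductive hypothesis on each block and reconnect $m$ by at most one descending edge to each side. In both cases $n$ drops and the axioms pin down the local picture at $m$.

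The main obstacle is precisely the reattachment step: proving that the peeled vertex rejoins in \emph{exactly one} way compatible with (d)/(e). For $\epsilon_m=+$ one must show that among the candidate neighbors there is a unique one whose descending edge can be added without creating a forbidden second ascending edge on one side of that neighbor; already a configuration of three $+$-vertices shows that the correct neighbor is forced by the degree condition at the neighbor rather than at $m$ itself, and need not be the geometrically closest one. I would therefore isolate this as a lemma: given the unique MCT on the reduced vertex set, there is exactly one vertex to which the reinserted maximum may attach without violating (d)/(e), and attaching there produces a valid MCT. Granting this, uniqueness of $T$ follows because every realizing tree must display the forced extremal structure at $m$, and existence follows because the recursive construction yields a genuine MCT. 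Disjointness of the regions is then immediate from uniqueness, and density of their union is clear since that union contains the complement of the finitely many hyperplanes $\{y_i=y_j\}$, a dense open subset of $\mathbb R^{n+1}$.
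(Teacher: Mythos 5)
First, a point of context: the paper does not prove this theorem at all --- it is quoted verbatim from Igusa--Ostroff \cite{io13} and used as a black box, so there is no in-paper proof to compare your argument against. Judged on its own terms, your proposal has the right skeleton (nonemptiness is indeed free, and peeling off the global maximum is a viable induction), but it is incomplete in two places, one of which carries essentially all of the content of the theorem.

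The smaller gap is in the convexity argument. Definition~\ref{def: MCT} requires $T$ to be a tree \emph{embedded} in $\mathbb{R}^2$ with straight edges, so membership of $y$ in $\mathcal R_\epsilon(T)$ also requires that the $n$ segments determined by $y$ and the edge set meet only at common endpoints. Pairwise non-crossing of segments is not one of the half-space conditions you list, and it is not a convex condition in general; for example, two nested edges $\{i,j\}\supset\{k,\ell\}$ cross precisely when $(k,y_k)$ and $(\ell,y_\ell)$ lie on opposite sides of the line through the edge $\{i,j\}$, which by conditions (b)/(c) happens exactly when $\epsilon_k\neq\epsilon_\ell$. The saving observation --- which you need to state --- is that this side information is pinned down by $\epsilon$ and the combinatorial type alone, so the crossing pattern is constant on the polyhedron cut out by your inequalities; since $T$ is embedded at its defining point, it is embedded throughout. (This is the same phenomenon the paper has to confront in Theorem~\ref{thm: bijection between MCTs and OSDs}, where non-crossing of the segments is derived from Lemma~\ref{lem: orientation of paths in OSDs} rather than assumed.) With that observation the region really is the polyhedron, and convexity follows.

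The serious gap is the reattachment lemma, which you correctly identify as the obstacle but do not prove. Your analysis of the global maximum $m$ is right ($m$ is a leaf when $\epsilon_m=+$, and a degree-at-most-two vertex spanned by no edge when $\epsilon_m=-$), and deleting $m$ does reduce to a valid smaller instance. But after that reduction, \emph{both} existence and uniqueness of $T$ are equivalent to the claim that $m$ can be reattached to the unique smaller tree $T'$ in exactly one way --- a claim that must simultaneously control the degree/side conditions (d)/(e) at the chosen neighbour, the over/under conditions (b)/(c) at every vertex the new edge spans, and non-crossing of the new edge with the edges of $T'$. None of these is addressed; "granting this" grants the theorem. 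As written, the proposal is therefore a correct reduction of the statement to an unproven lemma rather than a proof. To complete it you would need to characterize the attachment vertex explicitly (e.g.\ via the partial order $<_{T'}$ or a visibility argument from $(m,y_m)$) and verify that exactly one candidate survives all three families of constraints.
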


For a fixed $n$ and $\epsilon:[0,n]\to \{+,-\}$ we will construct a bijection between the set $\mathcal T_\epsilon$ of isomorphism classes of mixed cobinary trees with sign function $\epsilon$ and the set $\overrightarrow{\mathcal{D}}_{n,\epsilon}$ defined in Definition~\ref{def:cmatdiag}.

\begin{lemma}\label{lem: orientation of paths in OSDs}
Let $\overrightarrow{d} = \{\overrightarrow{c}(i_\ell, j_\ell)\}_{\ell \in [n]} \in \overrightarrow{\mathcal{D}}_{n,\epsilon}$ and let $\overrightarrow{\gamma}_\ell \in \overrightarrow{c}(i_\ell, j_\ell)$ with $\ell \in [n]$ be monotone paths from $i_\ell$ to $j_\ell$ that are pairwise non-intersecting except possibly at their endpoints. If $p, q$ are two points on the union of these paths where $q$ lies directly above $p$, then the paths $\overrightarrow{\gamma}_{i_1}, \ldots, \overrightarrow{\gamma}_{i_k}$ appearing in the unique curve $\overrightarrow{\gamma}$ connecting $p$ to $q$ are oriented in the same direction as $\overrightarrow{\gamma}$.
\end{lemma}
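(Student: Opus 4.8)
The plan is to reduce the statement to a local analysis at the vertices $\epsilon_k$ through which $\overrightarrow{\gamma}$ passes. First I would invoke Remark~\ref{mononoint} to represent the strands of $\overrightarrow{d}$ by monotone curves that are pairwise nonintersecting away from their endpoints; since the underlying diagram is a strand diagram, its graph is a forest, so there is a genuinely unique reduced path $\overrightarrow{\gamma}$ from $p$ to $q$ in this union. Because each $\overrightarrow{\gamma}_\ell$ is monotone in the $x$-coordinate, no single strand contains two distinct points sharing an $x$-coordinate; hence $\overrightarrow{\gamma}$ cannot remain on one strand and must pass through at least one point $\epsilon_k$. The problem thus becomes understanding, at each such $\epsilon_k$, on which incident strands $\overrightarrow{\gamma}$ arrives and departs, and how their orientations sit relative to the direction in which $\overrightarrow{\gamma}$ traverses them.

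Next I would extract the local picture from the strand conditions together with Definition~\ref{def:cmatdiag}. At a vertex with $\epsilon_k=+$ every incident strand lies locally below $\epsilon_k$, so $\overrightarrow{\gamma}$ meets $\epsilon_k$ as a local maximum (ascending to $\epsilon_k$ along one strand and descending along another), and dually $\overrightarrow{\gamma}$ meets a vertex with $\epsilon_k=-$ as a local minimum. The models $\overrightarrow{d}_\pm$ then pin down the orientations: at a $+$ vertex exactly one incident strand is oriented into $\epsilon_k$ (and it approaches from below), while the remaining one or two are oriented out of $\epsilon_k$ (and descend), with the mirror image holding at a $-$ vertex. From this I would record the key compatibility: among two incident strands lying on the same side of $\epsilon_k$ and overlapping in $x$, the one lying vertically lower is oriented toward $\epsilon_k$ and the one lying higher is oriented away from $\epsilon_k$. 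This is the step that genuinely uses the defining constraints of $\overrightarrow{\mathcal{D}}_{n,\epsilon}$ rather than the combinatorics of a single vertex in isolation, and I expect it to be the crux: it is precisely the compatibility that can fail for an arbitrary planar assignment of orientations and that Definition~\ref{def:cmatdiag} (read together with its figure) is designed to enforce.

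With the compatibility in hand I would run the global argument using the hypothesis that $q$ lies directly above $p$. Suppose toward a contradiction that some strand of $\overrightarrow{\gamma}$ is traversed against its orientation, and consider the first vertex at which $\overrightarrow{\gamma}$ switches from being traversed with its orientation to against it. The local models force this switch to occur at a vertex where $\overrightarrow{\gamma}$ enters and leaves on two like-oriented strands lying on opposite sides of that vertex, so that $\overrightarrow{\gamma}$ crosses the vertical line through it; comparing vertical heights there via the compatibility above shows that the arriving and leaving strands cannot be ordered as $q$-above-$p$ demands. Tracking these side-switches against the vertical line through $p$ and $q$, I would derive a contradiction with $p$ and $q$ sharing an $x$-coordinate, and conclude that every strand of $\overrightarrow{\gamma}$ is oriented in the direction of $\overrightarrow{\gamma}$. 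The delicate point, and the main obstacle, is controlling these crossings globally: a reversal at one vertex can in principle be compensated by the path wandering through other vertices, so the bookkeeping must simultaneously exploit the vertical ordering forced at every vertex and the fact that $\overrightarrow{\gamma}$ returns to the very $x$-coordinate from which it started.
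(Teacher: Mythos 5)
Your first two paragraphs are sound and match the ingredients the paper actually uses: the reduction to the local configurations $\overrightarrow{d}_{+}$, $\overrightarrow{d}_{-}$ of Definition~\ref{def:cmatdiag}, and the same-side compatibility (at any vertex, among two incident strands emanating to the same side, the one oriented into the vertex lies below the one oriented out of it) are exactly what drives the paper's base case. Your observation that a with-to-against switch can only occur where the path enters and leaves on two strands both oriented into the vertex, hence at a vertex with $\epsilon_k=-$ and the two strands on opposite sides (dually, against-to-with only at $\epsilon_k=+$), is also correct.

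The gap is the third paragraph: the global argument is announced rather than carried out, and the sketch of it does not work as stated. First, ``the first vertex at which $\overrightarrow{\gamma}$ switches from with to against'' need not exist even when some strand is traversed against its orientation --- the path may begin against-orientation, or be against-orientation throughout --- and nothing in your plan rules these cases out. Second, at a switch vertex the two relevant strands lie on \emph{opposite} sides of that vertex, so the same-side compatibility yields no height comparison there; the claim that ``comparing vertical heights there \ldots shows the arriving and leaving strands cannot be ordered as $q$-above-$p$ demands'' has no content unless the switch vertex happens to be adjacent to both $p$ and $q$. Third, ``tracking these side-switches against the vertical line through $p$ and $q$'' is not yet an argument: switch vertices need not lie on that line, and the path can cross that line in the interior of a strand with no switch occurring. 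You correctly flag this global control as the main obstacle, but it is precisely the content of the lemma and is left unresolved. The paper closes it by induction on the number $m$ of internal vertices of $\overrightarrow{\gamma}$: if the path meets the vertical line through $p$ and $q$ at a third point $r$, it splits into two shorter paths handled by induction, and the sign forced at the splitting rules out $r$ lying above $q$ or below $p$; if the path stays on one side of that line, one shows the $x$-extremal internal vertex cannot be a local extremum of the path (else induction would force two like-oriented edges in a forbidden configuration), hence it is joined by a single edge to $p$ or to $q$, its sign is read off from the point of the other edge lying directly above or below it, and induction applies to the complementary subpath. Some such inductive or equivalently global mechanism is needed to turn your plan into a proof.
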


\begin{proof}
The proof will be by induction on the number $m$ of internal vertices of the path $\gamma$. If $m=1$ with internal vertex $\epsilon_i$ then the path $\gamma$ has only two edges of $\overrightarrow d$: one going from $p$ to $\epsilon_i$, say to the left, and the other going from $\epsilon_i$ back to $q$. Since $\overrightarrow{d} \in \overrightarrow{\mathcal{D}}_{n,\epsilon}$, the edge coming into $\epsilon_i$ from its right is below the edge going out from $\epsilon_i$ to $q$. Therefore the orientation of these two edges in $\overrightarrow{d}$ matches that of $\gamma$.

Now suppose that $m\ge2$ and the lemma holds for smaller $m$. There are two cases. Case 1: The path $\gamma$ lies entirely on one side of $p$ and $q$ (as in the case $m=1$). Case 2: $\gamma$ has internal vertices on both sides of $p,q$.

\underline{Case 1}: Suppose by symmetry that $\gamma$ lies entirely on the left side of $p$ and $q$. Let $j$ be maximal so that $\epsilon_j$ is an internal vertex of $\gamma$. We claim that $\epsilon_j$ cannot be a local maximum of the ($x$-coordinate on the) curve $\gamma$ since, if it were, it would need to be either both below or both above the paths from $\epsilon_j$ to $p$ and to $q$. But, in that case, we can apply induction on $m$ to conclude that the edges in $\gamma$ adjacent to $\epsilon_j$ are oriented in the same direction contradicting the definition of $\overrightarrow{\mathcal{D}}_{n,\epsilon}$. Thus, $\gamma$ contains an edge connecting $\epsilon_j$ to either $p$ or $q$, say $p$. And the edge of $\gamma$ ending in $q$ contains a unique point $r$ which lies directly above $\epsilon_j$. This forces the sign to be $\epsilon_j=-$. By induction on $m$, we can conclude that each edge of $\overrightarrow d$ is oriented in the same direction as the path from $\epsilon_j$ to $r$.  So, it must be oriented outward from $\epsilon_j$. Since $\epsilon_j=-$, any other edge at $\epsilon_j$ is oriented inward. So, the edge between $p$ and $\epsilon_j$ is oriented from $p$ to $\epsilon_j$ as required. The edge coming into $r$ from the left is oriented to the right (by induction). So, this same edge continues to be oriented to the right as it goes from $r$ to $q$. The other subcases (when $\epsilon_j$ is connected to $q$ instead of $p$ and when $\gamma$ lies to the right of $p$ and $q$) are analogous.

\underline{Case 2}: Suppose that $\gamma$ is on both sides of $p$ and $q$. Then $\gamma$ passes through a third point, say $r$, on the same vertical line containing $p$ and $q$. Let $\gamma_0$ and $\gamma_1$ denote the parts of $\gamma$ going from $p$ to $r$ and from $r$ to $q$ respectively. Then $\gamma_0,\gamma_1$ each have at least one internal vertex. So, the lemma holds for each of them separately. There are three subcases: either (a) $r$ lies below $p$, (b) $r$ lies above $q$ or (c) $r$ lies between $p$ and $q$. In subcase (a), we have, by induction on $m$, that $\gamma_0,\gamma_1$ are both oriented away from $r$. So, $r=\epsilon_k=+$ which contradicts the assumption that $q$ lies above $r$. Similarly, subcase (b) is not possible. In subcase (c), we have by induction on $m$ that the orientations of the edges of $\overrightarrow d$ are compatible with the orientations of $\gamma_0$ and $\gamma_1$. So, the lemma holds in  subcase (c), which is the only subcase of Case 2 which is possible. Therefore, the lemma holds in all cases.  
\end{proof}

\begin{theorem}\label{thm: bijection between MCTs and OSDs}
For each $\overrightarrow{d} = \{\overrightarrow{c}(i_\ell,j_\ell)\}_{\ell \in [n]} \in \overrightarrow{\mathcal{D}}_{n,\epsilon}$, let $\mathcal R(\overrightarrow{d})$ denote the set of all $y\in \mathbb R^{n+1}$ so that $y_i<y_j$ for any $\overrightarrow c(i,j)$ in $\overrightarrow{d}$.  Then $\mathcal R(\overrightarrow{d})=\mathcal R_\epsilon(T)$ for a uniquely determined mixed cobinary tree $T\in \mathcal T_\epsilon$. Furthermore, this gives a bijection
\[
\overrightarrow{\mathcal{D}}_{n,\epsilon}\cong \mathcal T_\epsilon.
\]
\end{theorem}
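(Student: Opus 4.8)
The plan is to match each region $\mathcal R(\overrightarrow{d})$ to a mixed cobinary tree region by straightening the oriented diagram into a tree with straight edges, and then to recover the inverse map by reading oriented strands off the edges of an MCT. Throughout I would freely use the theorem of \cite{io13} quoted above (every point of $\mathbb R^{n+1}$ with distinct coordinates lies in a unique $\mathcal R_\epsilon(T)$, the regions being disjoint, convex, and of dense union) together with Lemma \ref{lem: orientation of paths in OSDs}, which supplies the crucial compatibility between the orientations of $\overrightarrow{d}$ and the vertical order of the $y_i$.

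First I would record that $\mathcal R(\overrightarrow{d})$ is nonempty and open. Since $\overrightarrow{d}$ consists of $n$ strands on the $n+1$ points of $\mathcal S_{n,\epsilon}$ and its underlying graph is a forest, that graph is in fact a spanning tree; any orientation of a tree is acyclic, so the strict inequalities $\{y_i<y_j:\overrightarrow{c}(i,j)\in\overrightarrow{d}\}$ are simultaneously satisfiable and cut out a nonempty open convex cone, which hence contains points with pairwise distinct coordinates.

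The core of the argument is to fix a generic $y\in\mathcal R(\overrightarrow{d})$ and show that the straight-line drawing $T_y$, with vertex set $\{(i,y_i):i\in[0,n]\}$ and one segment for each strand of $\overrightarrow{d}$, is a genuine MCT. Condition (a) holds because $y$ is generic, and conditions (d) and (e) follow by translating $y_i<y_j$ into ``the edge ascends from $i$ to $j$'': the edges meeting a vertex $\epsilon_k$ are then exactly the ascending/descending, left/right pattern permitted by the templates $\overrightarrow{d}_+,\overrightarrow{d}_-$ of Definition \ref{def:cmatdiag}. The hard part will be conditions (b) and (c), namely that each segment $a$--$b$ with $a<k<b$ passes below $\epsilon_k$ when $\epsilon_k=+$ and above it when $\epsilon_k=-$; this is exactly where Lemma \ref{lem: orientation of paths in OSDs} is needed. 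For $\epsilon_k=+$ I would pass to the honest monotone-curve realization of $\overrightarrow{d}$, in which the strand $c(a,b)$ lies below $\epsilon_k$ by the strand axioms, and take a point $p$ on $c(a,b)$ lying directly below $q=\epsilon_k$. The lemma forces the unique tree-path from $p$ to $q$ to be oriented upward; matching the first step along $a$--$b$ against its orientation $a\to b$ shows this path must leave $p$ toward $b$, so $\epsilon_k$ lies in the $b$-component of the tree and there is a directed path $b\to\cdots\to k$, giving $y_b<y_k$. As the height of the segment $a$--$b$ at $x=x_k$ lies strictly between $y_a$ and $y_b$, it is $<y_k$, which is condition (b); the case $\epsilon_k=-$ is symmetric. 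Embeddedness (no two segments cross transversally) I would verify by the same mechanism: a transversal crossing would produce a directly-above pair whose connecting tree-path contradicts the orientation-consistency of Lemma \ref{lem: orientation of paths in OSDs}.

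Once $T_y$ is known to be an MCT with vertex heights $y$, uniqueness in \cite{io13} identifies $T_y$ with the MCT $T$ of $y$; as the edge set of $T_y$ and the up/down sign of each edge do not vary with $y\in\mathcal R(\overrightarrow{d})$, this defines $\Theta(\overrightarrow{d}):=T$ with $\mathcal R(\overrightarrow{d})\subseteq\mathcal R_\epsilon(T)$. The reverse inclusion is immediate: the oriented edges of $T$ are literally those of $\overrightarrow{d}$, and since an MCT-isomorphism preserves the sign of each edge's slope, any $y'\in\mathcal R_\epsilon(T)$ realizes the edge $a$--$b$ with positive slope whenever $\overrightarrow{c}(a,b)\in\overrightarrow{d}$, i.e. $y'_a<y'_b$, so $y'\in\mathcal R(\overrightarrow{d})$. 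Hence $\mathcal R(\overrightarrow{d})=\mathcal R_\epsilon(T)$, with $T$ unique because distinct MCT regions are disjoint. Finally, to obtain the bijection I would exhibit the inverse $\Psi$ sending an MCT $T$ to the oriented diagram whose strands are the edges of $T$ directed from their lower to their higher endpoint: conditions (b) and (c) make each straight edge a legitimate strand, embeddedness makes the strands pairwise noncrossing, the graph is a tree, and (d) and (e) reproduce the local templates of Definition \ref{def:cmatdiag}, so $\Psi(T)\in\overrightarrow{\mathcal{D}}_{n,\epsilon}$. Since $\Theta$ and $\Psi$ both preserve the oriented edge set, $\Theta\circ\Psi$ and $\Psi\circ\Theta$ are identities, giving $\overrightarrow{\mathcal{D}}_{n,\epsilon}\cong\mathcal T_\epsilon$.
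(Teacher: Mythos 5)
Your proposal is correct and takes essentially the same route as the paper's own proof: both realize a point of $\mathcal R(\overrightarrow{d})$ as a straight-line drawing on the vertices $(i,y_i)$, invoke Lemma~\ref{lem: orientation of paths in OSDs} to rule out transversal crossings and to force the tree below the $+$ vertices and above the $-$ vertices, and invert by flattening an MCT back onto the axis to recover the oriented strand diagram. Your write-up is somewhat more explicit than the paper's in verifying conditions $b)$ and $c)$ of Definition~\ref{def: MCT} and in noting that the resulting tree does not depend on the choice of $y\in\mathcal R(\overrightarrow{d})$, but the underlying argument is the same.
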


\begin{proof} We first verify the existence of a mixed cobinary tree $T$ for every choice of $y\in\mathcal R(\overrightarrow d)$. Since the strand diagram is a tree, the vector $y$ is uniquely determined by $y_0\in\mathbb R$ and $y_{j_\ell}-y_{i_\ell}>0$, $\ell\in [n]$, which are arbitrary. Given such a $y$, we claim that the $n$ line segments $L_\ell$ in $\mathbb R^2$ connecting the pairs of points $(i_\ell,y_{i_\ell}), (j_\ell,y_{j_\ell})$ meet only {at} their endpoints. If not then two of these line segments, say $L_s,L_t$, meet at some point $(a,b)\in \mathbb R^2$. This leads to a contradiction of Lemma \ref{lem: orientation of paths in OSDs} as follows. Choose representatives $\overrightarrow{\gamma}_\ell \in \overrightarrow{c}(i_\ell, j_\ell)$ as in the lemma. Let $p\in \overrightarrow{\gamma}_s$ and $q\in \overrightarrow{\gamma}_t$ be the points on those curves with $x$-coordinate $a$. By symmetry assume $p$ is below $q$. Let $\overrightarrow{\gamma}_{w_1}, \ldots, \overrightarrow{\gamma}_{w_k}$ be the monotone curves in the unique path $\overrightarrow\gamma$ connecting $p$ and $q$ so that $w_1=s$ and $w_k=t$. By Lemma \ref{lem: orientation of paths in OSDs} these curves are oriented in the same direction as $\overrightarrow\gamma$. By definition of the vector $y\in \mathcal R(\overrightarrow{d})$ we have $y_{i_\ell}<y_{j_\ell}$ for each $\ell=w_1,\ldots,w_k$. Then $b<y_{j_s}<y_{i_t}<b$ is a contradiction.
So, $T$ is a linearly embedded tree. The lemma also implies that the tree $T$ lies above all negative vertices and below all positive vertices. The other parts of Definition \ref{def: MCT} follow from the definition of an oriented strand diagram. Therefore $T\in \mathcal T_\epsilon$. Since this argument works for every $y\in\mathcal R(\overrightarrow d)$, we see that $\mathcal R(\overrightarrow d)=\mathcal R_\epsilon(T)$ as claimed.

A description of the inverse mapping $\mathcal T_\epsilon\to \overrightarrow{\mathcal{D}}_{n,\epsilon}$ is given as follows. Take any $\epsilon$-MCT $T$ and deform the tree by moving all vertices vertically to the subset $[0,n]\times 0$ on the $x$-axis and deforming the edges in such a way that they are always embedded in the plane with no vertical tangents and so that their interiors do not meet. The result is an oriented strand diagram $\overrightarrow d$ with $\mathcal R(\overrightarrow d)=\mathcal R_\epsilon(T)$.

It is clear that these are inverse mappings giving the desired bijection $\overrightarrow{\mathcal{D}}_{n,\epsilon}\cong \mathcal T_\epsilon$. 
\end{proof}

\begin{example}
The MCTs in Figures \ref{figMCTb} and \ref{figMCTa} above give the oriented strand diagrams:

\begin{center}
\begin{tikzpicture}
\begin{scope}
	\draw[thick,color=blue,-stealth]
	(0,0) .. controls (0.2,1) and (.8,-1)..(.97,-.07);
	\draw[thick,color=blue,-stealth]
	(1,0) .. controls (1.2,-1) and (1.8,1)..(1.97,.07);
	\draw[thick,color=blue,-stealth]
	(2,0) .. controls (2.2,.5) and (2.8,.5)..(2.97,.07);
	\foreach\x in {0,1,2,3}\draw[fill] (\x,0) circle[radius=2pt];
\end{scope}
\begin{scope}[xshift=-7cm]
	\draw[thick,color=blue,-stealth]
	(0,0) .. controls (0.2,.5) and (.8,.5)..(.97,.07);
	\draw[thick,color=blue,-stealth]
	(2,0) .. controls (2.2,1) and (2.8,-1)..(3,-.07);
	\draw[thick,color=blue,-stealth]
	(3,0) .. controls (3.2,-.5) and (3.8,-.5)..(3.97,-.07);
	\draw[thick,color=blue,-stealth]
	(3,0) .. controls (2.5,-1) and (3,1.5)..(1.03,.07);
	\foreach\x in {0,...,4}\draw[fill] (\x,0) circle[radius=2pt];
\end{scope}
\end{tikzpicture}
\end{center}

and the oriented strand diagram in Example \ref{thirdexample} gives the MCT:

\begin{center}
\begin{tikzpicture}
\begin{scope}[xshift=-7cm]
	\draw[thick,color=blue,-stealth]
	(0,0) .. controls (1.6,-1.3) and (1.7,.7)..(1.97,.07);
	\draw[thick,color=blue,-stealth]
	(2,0) .. controls (2.2,1) and (2.8,-1)..(3,-.07);
	\draw[thick,color=blue,-stealth]
	(3,0) .. controls (3.2,-.5) and (3.8,-.5)..(3.97,-.07);
	\draw[thick,color=blue,-stealth]
	(3,0) .. controls (2.7,-.7) and (3,.5)..(2,.5)
	..controls (1,.5) and (1.3,-.7)..(1.03,-.07);
	\foreach\x in {0,...,4}\draw[fill] (\x,0) circle[radius=2pt];
\end{scope}
\begin{scope}
\draw (-1.5,0) node{$\iff$};
	\draw[ thick,color=blue,-stealth] 
	(0,-.5)--(1.95,-.03);
	\draw[ thick,color=blue,-stealth] 
	(2,0)--(2.95,.48);
	\draw[ thick,color=blue,-stealth] 
	(3,.5)--(3.95,.98);
	\draw[ thick,color=blue,-stealth] 
	(3,.5)--(1.05,.98);
	\draw[fill] (4,1) circle[radius=2pt] 
	(1,1)  circle[radius=2pt]
	(0,-.5)  circle[radius=2pt] 
	 (3,.5)circle[radius=2pt](3,-.2) 
	(2,0) circle[radius=2pt];
\end{scope}
\end{tikzpicture}
\end{center}
\end{example}

We now arrive at the proof of Theorem~\ref{c-matClassif}.  This theorem follows from the fact that oriented diagrams belonging to $\overrightarrow{\mathcal{D}}_{n,\epsilon}$ can be regarded as mixed cobinary trees by Theorem~\ref{thm: bijection between MCTs and OSDs}.

\begin{proof}[Proof of Theorem~\ref{c-matClassif}]
Let $f$ be the map $\textbf{c}\text{-mat}(Q_\epsilon)\to \overrightarrow{\mathcal{D}}_{n,\epsilon}$ induced by the map defined in Lemma \ref{cmatdiag}, and let $g$ be the bijective map $\mathcal T_\epsilon\to \overrightarrow{\mathcal{D}}_{n,\epsilon}$ defined in Theorem~\ref{thm: bijection between MCTs and OSDs}.  We will assert the existence of a map $h:\textbf{c}\text{-mat}(Q_\epsilon) \to \mathcal T_\epsilon$ which fits into the diagram

\begin{center}
\begin{tikzcd}
\textbf{c}\text{-mat}(Q_\epsilon) \ar{dr}[swap]{f} \ar{rr}{h} & & \mathcal T_\epsilon \ar{dl}{g}[swap]{\sim} \\
& \overrightarrow{\mathcal{D}}_{n,\epsilon} &
\end{tikzcd}
\end{center}
The theorem will follow after verifying that $h$ is a bijection and that $f = g \circ h$.

We will define two new notions of $\textbf{c}\text{-matrix}$, one for MCTs and one for oriented strand diagrams.  Let $T \in \mathcal T_\epsilon$ with internal edges $\ell_i$ having endpoints $(i_1,y_{i_1})$ and $(i_2,y_{i_2})$.  For each $\ell_i$, define the `$\textbf{c}\text{-vector}$' of $\ell_i$ to be $c_i(T) := \sum_{i_1< j \le i_2} \text{sgn}(\ell_i)e_j$, where $\text{sgn}(\ell_i)$ is the sign of the slope of $\ell_i$.  Define $c(T)$ to be the `$\textbf{c}\text{-matrix}$' of $T$ whose rows are the $\textbf{c}\text{-vectors } c_i(T)$.  Now, let $\overrightarrow{d} = \{\overrightarrow{c}(i_\ell,j_\ell)\}_{\ell \in [n]} \in \overrightarrow{\mathcal{D}}_{n,\epsilon}$.  For each oriented strand $\overrightarrow{c}(i_\ell,j_\ell)$, define the `$\textbf{c}\text{-vector}$' of $\overrightarrow{c}(i_\ell,j_\ell)$ to be 
$$\begin{array}{rcl}
c_\ell(\overrightarrow{d}) & := & \left\{\begin{array}{rll} \sum_{i_\ell< k \le j_\ell} \text{sgn}(\overrightarrow{c}(i_\ell,j_\ell))e_k & : & i_\ell < j_\ell\\
\sum_{j_\ell< k \le i_\ell} \text{sgn}(\overrightarrow{c}(i_\ell,j_\ell))e_k & : & i_\ell > j_\ell\end{array}\right.
\end{array}$$
where $\text{sgn}(\overrightarrow{c}(i_\ell,j_\ell))$ is positive if $i_\ell < j_\ell$ and negative if $i_\ell > j_\ell$.  Define $c(\overrightarrow{d})$ to be the `$\textbf{c}\text{-matrix}$' of $\overrightarrow{d}$ whose rows are the $\textbf{c}\text{-vectors } c_\ell(\overrightarrow{d})$.

It is known that the notion of $\textbf{c}\text{-matrix}$ for MCTs coincides with the original notion of $\textbf{c}\text{-matrix}$ defined in Section~\ref{subsec:quivers}, and that there is a bijection between $\textbf{c}\text{-mat}(Q_\epsilon)$ and $\mathcal T_\epsilon$ which preserves $\textbf{c}\text{-matrices}$ (see \cite[Remarks 2 and 4]{io13} for details).  Thus, we have a bijective map $h: \textbf{c}\text{-mat}(Q_\epsilon)\to \mathcal T_\epsilon$.  On the other hand, the bijection $g : \mathcal T_\epsilon\to \overrightarrow{\mathcal{D}}_{n,\epsilon}$ defined in Theorem~\ref{thm: bijection between MCTs and OSDs} also preserves $\textbf{c}\text{-matrices}$.  The map $f: \textbf{c}\text{-mat}(Q_\epsilon) \to \mathcal{T}_\epsilon$ preserves $\textbf{c}\text{-matrices}$ by definition.  Hence, we have $f = g \circ h$ and $f$ is a bijection, as desired.
\end{proof}

\begin{remark}
 For linearly-ordered quivers (those with $\epsilon = (+,\ldots,+)$ or $\epsilon = (-,\ldots,-))$, this bijection was established by the first and third authors in \cite{gm1} using a different approach.  The bijection was given by hand without going through MCTs.  This was more tedious, and the authors feel that some aspects (such as mutation) are better phrased in terms of MCTs.
\end{remark}

\section{Exceptional sequences and linear extensions}\label{sec:pos}
In this section, we consider the problem of counting the number of CESs arising from a given CEC. We show that this problem can be restated as the problem of counting the number of linear extensions of certain posets. 

\begin{definition}
We define the \textbf{poset} $\mathcal{P}_d = (\{c(i_\ell,j_\ell)\}_{\ell \in [n]}, \le)$ \textbf{associated to $d$} as the partially ordered set whose elements are the strands of $d$ and where $c(k,\ell)$ covers $c(i,j)$, denoted by $c(i,j) \lessdot c(k,\ell)$, if and only if the strand $c(k,\ell)$ is clockwise from $c(i,j)$ and there does not exist another strand $c(i^\prime,j^\prime)$ distinct from $c(i,j)$ and $c(k,\ell)$ such that  $c(i^\prime,j^\prime)$ is clockwise from $c(i,j)$ and counterclockwise from $c(k,\ell)$. 
\end{definition}

The construction defines a poset because any oriented cycle in the Hasse diagram of $\mathcal{P}_d$ arises from a cycle in the graph determined by $d$. Since the  graph determined by $d$ is a tree, it has no cycles.  In Figure~\ref{fig:PosetExample}, we show a diagram $d \in \mathcal{D}_{4,\epsilon}$ where $\epsilon :=(-,+,-,+,+)$ and its poset $\mathcal{P}_d.$

\begin{figure}[ht]
\centering
\raisebox{.3in}{\begin{minipage}[b]{0.4\textwidth}
\includegraphics[width=\textwidth]{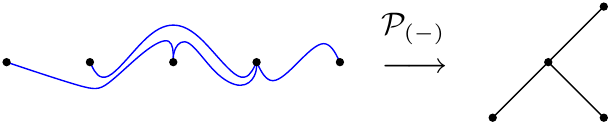}
\caption{A diagram and its poset.}
\label{fig:PosetExample}
\end{minipage}}
\ \ \ \ \ \ \ \ \ \ \
\begin{minipage}[b]{0.4\textwidth}
\includegraphics[width=\textwidth]{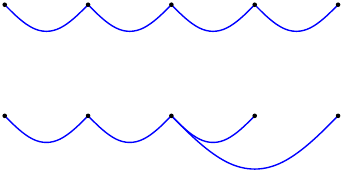}
\caption{Two diagrams with the same poset.}
\label{fig:SamePosets}
\end{minipage}
\end{figure}

In general, the map $\mathcal{D}_{n,\epsilon} \rightarrow \mathscr{P}(\mathcal{D}_{n,\epsilon}):=\{\mathcal{P}_d: d \in \mathcal{D}_{n,\epsilon}\}$ is not injective. For instance, each of the two diagrams in Figure~\ref{fig:SamePosets} have $\mathcal{P}_d = \textbf{4}$ where $\textbf{4}$ denotes the linearly-ordered poset with 4 elements. It is thus natural to ask which posets are obtained from strand diagrams. 

Our next result describes the posets arising from diagrams in $\mathcal{D}_{n,\epsilon}$ where $\epsilon = (-,\ldots,-)$ or $\epsilon = (+,\ldots,+)$. Before we state it, we remark that diagrams in $\mathcal{D}_{n,\epsilon}$ where $\epsilon = (-,\ldots,-)$ or $\epsilon = (+,\ldots,+)$ can be regarded as \textbf{chord diagrams}.\footnote{These noncrossing trees embedded in a disk with vertices lying on the boundary have been studied by Araya in \cite{a13}, Goulden and Yong in \cite{gy02}, and the first and third authors in \cite{gm1}.} Figure~\ref{ident_chord_strand} gives an example of this identification. Under this identification, the term strand is synonymous with \textbf{chord}. 

\begin{figure}[h]
$$\includegraphics[scale = .75]{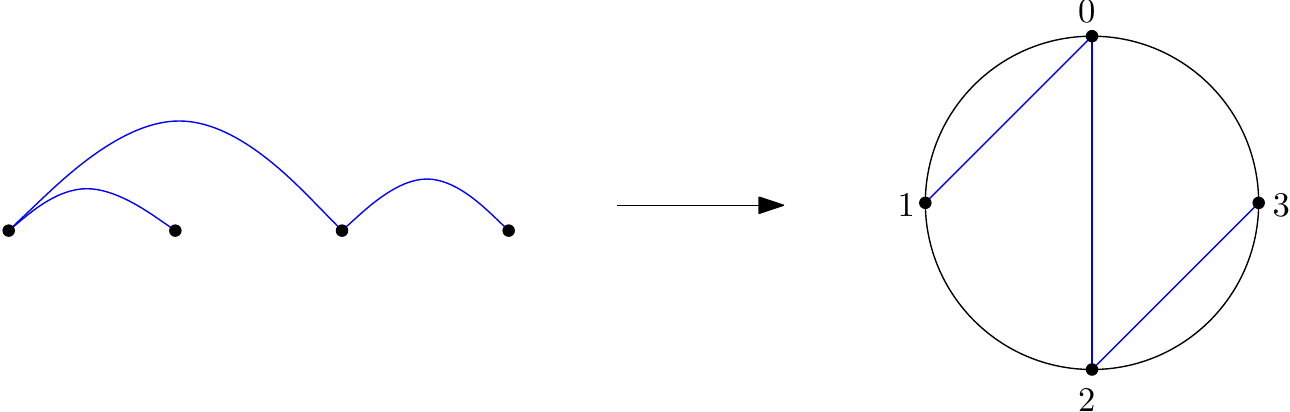}$$
\caption{The identification of between chord diagrams and strand diagrams.}
\label{ident_chord_strand}
\end{figure}

Let $d \in \mathcal{D}_{n,\epsilon}$ where $\epsilon = (-,\ldots,-)$ or $\epsilon = (+,\ldots,+)$. Let $c(i,j)$ be a strand of $d$. There is an obvious action of $\mathbb{Z}/(n+1)\mathbb{Z}$ on chord diagrams. Let $\tau \in \mathbb{Z}/(n+1)\mathbb{Z}$ denote a generator and define $\tau c(i,j) := c(i-1, j-1)$ and $\tau^{-1} c(i,j) := c(i+1, j+1)$ where we consider $i\pm 1$ and $j\pm 1$ mod $n+1$. We also define $\tau d := \{\tau c(i_\ell,j_\ell)\}_{\ell \in [n]}$ and $\tau^{-1}d := \{\tau^{-1} c(i_\ell,j_\ell)\}_{\ell \in [n]}$. The next lemma, which is easily verified, shows that the order-theoretic properties of CECs are invariant under the action of $\tau^{\pm 1}$.

\begin{lemma}\label{rotationinv}
Let $d \in \mathcal{D}_{n,\epsilon}$ where $\epsilon = (-,\ldots,-)$ or $\epsilon = (+,\ldots,+)$. Then we have the following isomorphisms of posets $\mathcal{P}_d \cong \mathcal{P}_{\tau d}$ and $\mathcal{P}_d \cong \mathcal{P}_{\tau^{-1}d}.$
\end{lemma}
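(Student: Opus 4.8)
The plan is to realize the combinatorial rotation $\tau$ as an honest geometric rotation in the chord-diagram model and to check that, because $\epsilon$ is constant, this rotation is a symmetry of every piece of structure entering the definition of $\mathcal{P}_d$. Concretely, I would first invoke the identification of Figure~\ref{ident_chord_strand}, under which the $n+1$ collinear points $\epsilon_0,\ldots,\epsilon_n$ become $n+1$ cyclically ordered points on the boundary circle of a disk and each strand $c(i,j)$ becomes a chord. Under this identification the map $c(i,j)\mapsto \tau c(i,j)=c(i-1,j-1)$ (indices taken mod $n+1$) is induced by the rigid rotation $\rho$ of the disk by the angle $2\pi/(n+1)$ (in the appropriate direction), which cyclically permutes the boundary marked points. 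The hypothesis that $\epsilon=(+,\ldots,+)$ or $\epsilon=(-,\ldots,-)$ is used precisely here: since every marked point carries the same sign, $\rho$ carries a valid chord (strand) to a valid chord, so $\tau$ is a well-defined bijection from the strands of $d$ to the strands of $\tau d$.

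Next I would check that $\tau$ sends strand diagrams to strand diagrams and preserves the data defining the poset. Because $\rho$ is a homeomorphism of the disk, two chords cross nontrivially if and only if their images cross nontrivially; hence $\tau d$ again satisfies condition $a)$ of a strand diagram, and the graph determined by $\tau d$ is isomorphic to the graph determined by $d$, so it is again a forest, giving condition $b)$. The crux is the clockwise relation: I claim that $c(k,\ell)$ is clockwise from $c(i,j)$ if and only if $\tau c(k,\ell)$ is clockwise from $\tau c(i,j)$. Since $\rho$ is orientation-preserving and maps each marked point to a marked point of the same sign, it carries each of the six local clockwise configurations to a configuration of the same type at the rotated common endpoint; thus the clockwise relation, and simultaneously the counterclockwise relation, is preserved in both directions.

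Finally, since the covering relation $c(i,j)\lessdot c(k,\ell)$ is defined entirely in terms of ``$c(k,\ell)$ clockwise from $c(i,j)$'' together with the nonexistence of an intermediate strand that is clockwise from $c(i,j)$ and counterclockwise from $c(k,\ell)$, and since $\tau$ is a bijection on the strand set preserving both the clockwise and the counterclockwise relations, it follows immediately that $c(i,j)\lessdot c(k,\ell)$ in $\mathcal{P}_d$ if and only if $\tau c(i,j)\lessdot \tau c(k,\ell)$ in $\mathcal{P}_{\tau d}$. Hence $\tau$ is an isomorphism of posets $\mathcal{P}_d\cong \mathcal{P}_{\tau d}$, and the identical argument with the inverse rotation $\rho^{-1}$ yields $\mathcal{P}_d\cong \mathcal{P}_{\tau^{-1}d}$. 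I expect the only genuinely delicate point to be the verification that the clockwise relation survives at the ``seam'' where the rotation wraps $\epsilon_0$ past $\epsilon_n$; this is exactly the step that fails on the line but becomes transparent in the disk model, and it is also where constancy of $\epsilon$ is indispensable, since otherwise the rotation could move a ``locally below'' vertex to a ``locally above'' vertex and destroy the configuration types.
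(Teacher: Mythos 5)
Your argument is correct. The paper does not actually write out a proof of this lemma --- it is asserted as ``easily verified'' immediately after the chord-diagram identification and the definition of the $\mathbb{Z}/(n+1)\mathbb{Z}$ action --- and your realization of $\tau$ as a rigid rotation of the marked disk is precisely the verification the authors intend: constancy of $\epsilon$ makes the rotation an orientation-preserving symmetry of the marked disk, so it preserves nontrivial crossings, the forest condition, and the clockwise/counterclockwise relations at shared endpoints (including across the seam), and hence the covering relation defining $\mathcal{P}_d$.
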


\begin{theorem}\label{posetclassif}
Let $\epsilon = (-,\ldots,-)$ or let $\epsilon = (+,\ldots, +).$ Then a poset $\mathcal{P} \in \mathscr{P}(\mathcal{D}_{n,\epsilon})$ if and only if 

$\begin{array}{rllc}
i) & \text{each $x \in \mathcal{P}$ has at most two covers and covers at most two elements,}\\
ii) & \text{the underlying graph of the Hasse diagram of $\mathcal{P}$ has no cycles,}\\
iii) & \text{the Hasse diagram of $\mathcal{P}$ is connected}.\\
\end{array}$
\end{theorem}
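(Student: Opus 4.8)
The three conditions assert precisely that the Hasse diagram $H$ of $\mathcal{P}$ is a tree (connected and acyclic) in which every element has at most two upper covers and at most two lower covers. The plan is to prove the two implications separately: the forward direction by direct verification from the definitions, and the converse by induction on $n$, realizing $\mathcal{P}$ as a noncrossing spanning tree.

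For necessity, suppose $\mathcal{P} = \mathcal{P}_d$ with $d \in \mathcal{D}_{n,\epsilon}$; note first that $d$ is then a noncrossing spanning tree, since a forest with $n$ edges on the $n+1$ points $\epsilon_0,\dots,\epsilon_n$ is connected. For condition (i), I would use that one strand is clockwise from another only if they share an endpoint, so every cover relation ``occurs at'' one of the two endpoints of a strand. Around a fixed point the incident strands are linearly ordered by the clockwise relation, and at each such endpoint a given strand can have at most one strand covering it (a second one would lie strictly between them in the clockwise order and thus violate the covering condition); summing over the two endpoints yields at most two covers, and dually at most two covered elements. Condition (ii) is exactly the observation recorded after the definition of $\mathcal{P}_d$: an oriented cycle in $H$ would force a cycle in the graph determined by $d$, impossible for a tree. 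For condition (iii), I would observe that two strands sharing an endpoint are comparable (one is clockwise from the other), so the comparability graph of $\mathcal{P}$ contains the line graph of the tree $d$; the line graph of a connected tree is connected, and any two comparable elements are joined by a saturated chain in $H$, so connectivity of the comparability graph forces connectivity of $H$.

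For sufficiency I would induct on $n$, the base case $n=1$ being a single strand. Given $\mathcal{P}$ satisfying (i)--(iii), its Hasse diagram $H$ is a tree and hence has at least two leaves. Applying a horizontal reflection of the point configuration if necessary (this preserves the constant sign vector $\epsilon$ and reverses the clockwise relation, hence replaces $\mathcal{P}$ by its order dual and keeps us inside the same $\mathcal{D}_{n,\epsilon}$), I may assume some leaf $m$ of $H$ is a \emph{maximal} element covering a unique element $p$. Deleting the maximal element $m$ leaves a poset $\mathcal{P}'$ on $n-1$ elements whose Hasse diagram is $H$ with that leaf removed; thus $\mathcal{P}'$ again satisfies (i)--(iii), and by induction $\mathcal{P}' = \mathcal{P}_{d'}$ for some strand diagram $d'$. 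Since $m$ was a cover of $p$, the strand $p$ has at most one cover in $\mathcal{P}'$, so geometrically the chord $p$ has an endpoint $v$ at which it is clockwise-extremal. I would then reinsert $m$ by adjoining a new boundary point adjacent to $v$ together with a short, boundary-hugging chord joining it to $v$, placed as the immediate clockwise successor of $p$ at $v$ (using the rotation invariance of Lemma~\ref{rotationinv} to move $v$ to a convenient position so the new chord can be drawn without crossings). This chord is clockwise-extremal at $v$ and alone at its new endpoint, so it is maximal and covers exactly $p$, while the clockwise order of all previously present chords at $v$ is unchanged; hence $\mathcal{P}_d \cong \mathcal{P}$.

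The main obstacle is the reinsertion step, where one must guarantee that the new chord produces the single intended cover $p \lessdot m$ and no spurious relations, and that it embeds without crossing existing chords. The first point is controlled by attaching at a clockwise-extremal endpoint of $p$, which exists precisely because $p$ lost the cover $m$ upon deletion: the new chord thereby acquires no clockwise successor and only the predecessor $p$. The second point is a routine planarity argument, made clean by first rotating the diagram so that $v$ sits at the boundary. The only genuinely delicate bookkeeping is checking that none of the cover relations among the old strands is disturbed, which holds because the insertion takes place strictly beyond the previously extremal chord at $v$, leaving every earlier clockwise adjacency intact.
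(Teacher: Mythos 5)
Your proof is correct and shares the paper's overall skeleton --- direct verification of (i)--(iii) for $\mathcal{P}_d$, and, for the converse, an induction that deletes a maximal element and reattaches a short boundary chord --- but you organize the inductive step differently. The paper splits into two cases according to whether the chosen maximal element $x$ covers one or two elements: when $x$ covers two elements $y,z$, the poset $\mathcal{P}\setminus\{x\}$ decomposes as a disjoint union $\mathcal{Q}_1 + \mathcal{Q}_2$, each piece is realized by induction as a chord diagram $d_i$, and the two diagrams are concatenated along a new boundary vertex by an explicit chord $c(k_1,n+1)$ (after rotating each $d_i$ via Lemma~\ref{rotationinv} so the chords of $y$ and $z$ sit in the right positions). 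You eliminate this gluing case entirely: every leaf of the tree $H$ is either maximal or minimal, and a reflection of the constant-sign point configuration realizes the order dual inside the same $\mathcal{D}_{n,\epsilon}$, so you may always delete a maximal element that is a \emph{leaf} of $H$, i.e., one covering a unique element. This buys a single-case induction at the price of the duality observation; the paper avoids any appeal to duality but must carry out the two-component decomposition and the bookkeeping of splicing two diagrams together. The reinsertion steps coincide (attach a new boundary point next to the clockwise-extremal endpoint of the unique covered element $p$), and your justification that such an endpoint exists --- $p$ lost one of its at most two covers upon deletion --- is the same observation the paper uses, stated more carefully. Two small points worth making explicit if you write this up: first, that removing a maximal element never creates new cover relations among the remaining elements, which is what guarantees the Hasse diagram of $\mathcal{P}'$ is literally $H$ minus the leaf; second, the converse of your extremality claim (a chord that is clockwise non-extremal at an endpoint acquires a cover through that endpoint), which both you and the paper use implicitly.
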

\begin{proof}
Let $\mathcal{P}_d \in \mathscr{P}(\mathcal{D}_{n,\epsilon})$. By definition, $\mathcal{P}_d$ satisfies $i)$. It is also clear that the Hasse diagram of $\mathcal{P}_d$ is connected since the graph determined by $d$ is connected. To see that $\mathcal{P}_d$ satisfies $ii)$, {suppose that $C$ is a full subposet of $\mathcal{P}_d$ whose Hasse diagram is a \textbf{minimal cycle} (i.e. the underlying graph of $C$ is a cycle, but does not contain a proper subgraph that is a cycle).} Thus there exists $c_0 \in C$ that is covered by two distinct elements $c_1, c_\ell \in C$ in $\mathcal{P}_d$ where $\ell \le n$. Observe that $C$ can be regarded as a sequence of distinct chords $\{c_i\}_{i = 0}^\ell$ of $d$ where for all $i \in [0,\ell],$ $c_i$ and $c_{i+1}$ (we consider the indices modulo $\ell + 1$) share an endpoint $j$, and no chord adjacent to $j$ appears between $c_i$ and $c_{i+1}$. Observe that the graph determined by the diagram $d \backslash c_0$ has two connected components. We conclude that such a sequence $\{c_i\}_{i = 0}^\ell$ cannot exist in $d$. Thus the Hasse diagram of $\mathcal{P}_d$ has no cycles.


To prove the converse, we proceed by induction on the number of elements of $\mathcal{P}$ where $\mathcal{P}$ is a poset satisfying conditions $i), ii), iii)$. If $\#\mathcal{P} = 1$, then $\mathcal{P}$ is the unique poset with one element and $\mathcal{P} = \mathcal{P}_d$ where $d$ is the unique chord diagram with a single chord in a disk with exactly two boundary vertices. Assume that for any poset $\mathcal{P}$ satisfying conditions $i), ii), iii)$ with $\#\mathcal{P} = r$ for any positive integer $r < n+1$ there exists a chord diagram $d$ such that $\mathcal{P} = \mathcal{P}_d$. Let $\mathcal{Q}$ be a poset satisfying the above conditions where $\#\mathcal{Q} = n+1,$ and let $x \in \mathcal{Q}$ be a maximal element. We know $x$ covers either one or two elements of $\mathcal{Q}$.

Assume $x$ covers two elements $y,z \in \mathcal{Q}$. Since the Hasse diagram of $\mathcal{Q}$ has no cycles, we have that $\mathcal{Q} - \{x\} = \mathcal{Q}_1 + \mathcal{Q}_2$ where $y \in \mathcal{Q}_1$, $z \in \mathcal{Q}_2$, and $\mathcal{Q}_i$ satisfies $i), ii), iii)$ for $i \in [2]$. By induction, there exists positive integers $k_1, k_2$ satisfying $k_1 + k_2 = n$ and chord diagrams $d_i \in \mathcal{D}_{k_i,\epsilon^{(i)}}$ where $\mathcal{Q}_i = \mathcal{P}_{d_i}$ for $i \in [2]$ and where $\epsilon^{(i)} \in \{+,-\}^{k_i + 1}$ has all of its entries equal to the entries of $\epsilon$. By Lemma~\ref{rotationinv},  we can further assume that the chord corresponding to $y \in \mathcal{Q}_1$ (resp. $z \in \mathcal{Q}_2$)  is $c_1(i(y),k_1) \in d_1$ for some $i(y) \in [0,k_1-1]$ (resp.  {$c_2(j(z),k_2) \in d_2$} { for some $j(z) \in [1,k_2]$).} Define $d_1\sqcup d_2 := \{c^\prime(i^\prime_\ell,j^\prime_\ell)\}_{\ell \in [n]}$ to be the chord diagram in the disk with $n+2$ boundary vertices as follows (see Figure~\ref{disjunion}):
{$$\begin{array}{rclcc}
c^\prime(i^\prime_\ell, j^\prime_\ell) & := & \left\{\begin{array}{lcl} c_1(i_\ell, j_\ell) & : & \text{if $\ell \in [k_1]$}\\ \tau^{-(k_1+1)}c_2(i_{\ell - k_1},j_{\ell - k_1})  & : & \text{if $\ell \in [k_1 + 1, n]$}. \end{array}\right.
\end{array}$$}



\begin{figure}[h]
\includegraphics[scale=.85]{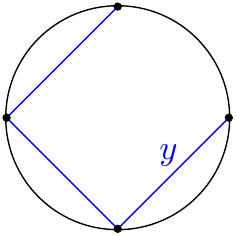} \raisebox{.35in}{$\text{ $\Large\sqcup$ }$} \includegraphics[scale=.85]{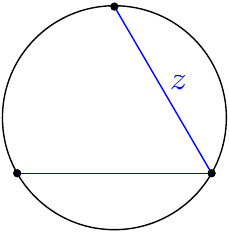} \raisebox{.35in}{ \text{ $\Large=$ } } \includegraphics[scale=.85]{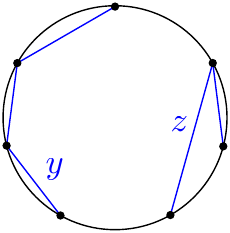}
\caption{An example with $k_1 = 3$ and $k_2 = 2$ so that $n = k_1 + k_2 = 5.$}
\label{disjunion}
\end{figure}

\noindent {Define $c^\prime(i^\prime_{n+1}, j^\prime_{n+1}): = c(k_1, n+1)$ }and then $d:= \{c^\prime(i^\prime_\ell, j^\prime_\ell)\}_{\ell \in [n+1]}$ satisfies $i), ii), iii),$ and $\mathcal{Q} = \mathcal{P}_d.$

Now assume $x$ covers only the element $y \in \mathcal{Q}$. In this case, the Hasse diagram of $\mathcal{Q}-\{x\}$ is connected. Now by induction the poset $\mathcal{Q} - \{x\} = \mathcal{P}_d$ for some diagram $d = \{c(i_\ell, j_\ell)\}_{\ell \in [n]} \in \mathcal{D}_{n,\epsilon}$ where we assume $i_\ell < j_\ell$. Let $y  = c(i(y),j(y)) \in \mathcal{Q} - \{x\}$ with $i(y) < j(y)$ denote the unique element that is covered by $x$ in $\mathcal{Q}$. This means that there are no chords in $d$ that are clockwise from $c(i(y), j(y))$. Without loss of generality, we assume that there are no chords in $d$ that are clockwise from $c(i(y), j(y))$ about $i(y)$.

We regard $d$ as an element of $\mathcal{D}_{n+1,\epsilon^\prime}$ by replacing it with $\widetilde{d}:= \{c^\prime(i^\prime_\ell, j^\prime_\ell)\}_{\ell \in [n]} \in \mathcal{D}_{n+1,\epsilon^\prime}$ as follows (see Figure~\ref{addpoint}): 

{$$\begin{array}{rclcc}
c^\prime(i^\prime_\ell, j^\prime_\ell) & := & \left\{\begin{array}{lcl} \rho^{-1}c(i_\ell, j_\ell) & : & \text{if $i_\ell \le i(y)$ and $j(y) \le j_\ell$,}\\  \tau^{-1}c(i_{\ell},j_{\ell})  & : & \text{if $j(y)  \le i_\ell$,}\\
c(i_\ell, j_\ell) & : & \text{otherwise}. \end{array}\right.
\end{array}$$}

\begin{figure}[h]
$$\begin{array}{ccccccccc}
\raisebox{.35in}{$d$} & \raisebox{.35in}{$=$} & \includegraphics[scale=.85]{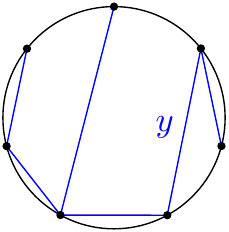} \raisebox{.35in}{ \text{ $\Large \longrightarrow$ } } & \raisebox{.35in}{$\widetilde{d}$} & \raisebox{.35in}{$=$} & \includegraphics[scale=.85]{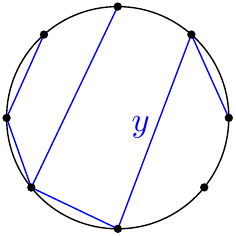}
\end{array}$$
\caption{An example with $n = 6$.}
\label{addpoint}
\end{figure}

\noindent Define $c^\prime(i^\prime_{n+1}, j^\prime_{n+1}) := c(i(y), i(y)+1)$ and put $d^\prime := \{c^\prime(i^\prime_\ell, j^\prime_\ell)\}_{\ell \in [n+1]}$. As $\mathcal{Q}-\{x\}$ satisfies $i), ii),$ and $iii)$, it is clear that the resulting chord diagram $d^\prime$ also satisfies $i), ii),$ and $iii)$, and that $\mathcal{P} = \mathcal{P}_{d^\prime}$. 
\end{proof}

Let $\mathcal{P}$ be a finite poset with $m = \#\mathcal{P}$. Let $f: \mathcal{P} \to \textbf{m}$ be an injective, order-preserving map (i.e. $x \le y$ implies $f(x) \le f(y)$ for all $x,y \in \mathcal{P}$) where $\textbf{m}$ is the linearly-ordered poset with $m$ elements. We call $f$ a \textbf{linear extension} of $\mathcal{P}$. We denote the set of linear extensions of $\mathcal{P}$ by $\mathscr{L}(\mathcal{P})$. Note that since $f$ is an injective map between sets of the same cardinality, $f$ is a bijective map between those sets.

\begin{theorem}\label{CESsandlinexts}
Let $d = \{c(i_\ell, j_\ell)\}_{\ell\in [n]} \in \mathcal{D}_{n, \epsilon}$ and let $\overline{\xi}_{\epsilon}$ denote the corresponding complete exceptional collection. Let $\text{CES}(\overline{\xi}_{\epsilon})$ denote the set of CESs that can be formed using only the representations appearing in $\overline{\xi}_{\epsilon}$. Then the map $\chi: \text{CES}(\overline{\xi}_{\epsilon}) \to \mathscr{L}(\mathcal{P}_d)$ defined by $(X_{i_1,j_1}^{\epsilon},\ldots, X_{i_n,j_n}^{\epsilon}) \stackrel{\chi_2}{\longmapsto} \{(c(i_\ell,j_\ell), n+1-\ell)\}_{\ell \in [n]} \stackrel{\chi_1}{\longmapsto} (f(c(i_\ell,j_\ell)) := n+1-\ell)$ is a bijection.
\end{theorem}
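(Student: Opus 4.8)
The plan is to factor the map as $\chi = \chi_1 \circ \chi_2$, exactly as written, and treat the two pieces separately. First I would observe that $\chi_2$ is simply the restriction of the bijection $\widetilde{\Phi}_\epsilon$ of Theorem~\ref{ESbij} (with $k=n$) to those complete exceptional sequences whose underlying collection is $\overline{\xi}_\epsilon$. Since $\widetilde{\Phi}_\epsilon$ sends such a CES to a good labeled diagram whose \emph{underlying diagram} is forced to be $d$, and since its inverse $\widetilde{\Psi}_\epsilon$ recovers a CES built from exactly the strands of $d$, the map $\chi_2$ is a bijection from $\text{CES}(\overline{\xi}_\epsilon)$ onto the set $G(d)$ of good strand-labelings of the fixed diagram $d$, meaning those labelings $s$ for which $\{(c(i_\ell,j_\ell),s_\ell)\}_{\ell\in[n]}$ lies in $\mathcal{D}_{n,\epsilon}(n)$. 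This step is bookkeeping layered on top of Theorem~\ref{ESbij}.

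The substance is to show that $\chi_1$ is a bijection $G(d)\to\mathscr{L}(\mathcal{P}_d)$. Under $\chi_1$ the label of a strand becomes literally the value of $f$ on that strand, so $\chi_1$ is a bijection precisely when the good labelings and the linear extensions of $\mathcal{P}_d$, regarded as bijective maps from the strands of $d$ to $[n]$, form the same set. I would prove this equality by matching the inequality constraints defining each side. Introduce the relation $R$ on strands of $d$ where $c \mathrel{R} c'$ means $c'$ is clockwise from $c$; because distinct strands share at most one endpoint and the graph of $d$ is a tree, $R$ is acyclic (consistent with the remark that any cycle in the Hasse diagram of $\mathcal{P}_d$ would come from a cycle in $d$), and the transitive closure of $R$ coincides with the strict order $\mathcal{P}_d$. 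Indeed each cover is an $R$-relation, and conversely any $R$-relation that is not a cover factors, by the definition of $\lessdot$, as $c \mathrel{R} c'' \mathrel{R} c'$, so induction yields a chain of covers. Now a bijective labeling $s$ is good exactly when, around each vertex $\epsilon_v$, the labels increase along the clockwise order of the strands at $\epsilon_v$; since that clockwise order is a total order whose pairs are precisely the $R$-relations supported at $\epsilon_v$, goodness is equivalent to requiring $s(c)<s(c')$ whenever $c \mathrel{R} c'$.

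It then remains to match this with linear extensions. On the one hand, a linear extension $f$ is injective and order-preserving, so $c <_{\mathcal{P}_d} c'$ forces $f(c)<f(c')$; in particular $f$ respects every generating relation $c \mathrel{R} c'$, hence $f$ is good. On the other hand, if $s$ is good it respects every $R$-relation, and since $<_{\mathcal{P}_d}$ is the transitive closure of $R$, any relation $c <_{\mathcal{P}_d} c'$ arises from a chain $c = c_0 \mathrel{R} c_1 \mathrel{R} \cdots \mathrel{R} c_m = c'$ along which the integer labels strictly increase, whence $s(c)<s(c')$; thus $s$ is order-preserving and, being a bijection, is a linear extension. This gives $G(d)=\mathscr{L}(\mathcal{P}_d)$, so $\chi_1$, and therefore $\chi$, is a bijection.

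I expect the main obstacle to be this middle step: verifying cleanly that the purely local ``good'' conditions (labels increasing clockwise at each vertex) generate exactly the global order $\mathcal{P}_d$, i.e. that the transitive closure of the clockwise relation $R$ has the cover relations of $\mathcal{P}_d$ as its Hasse diagram. The acyclicity of $R$ — needed both to make $\mathcal{P}_d$ a poset and to pass freely between $R$ and its transitive closure — is precisely where the forest/tree hypothesis on $d$ is used, and some care is required to confirm that a non-cover clockwise relation always factors through an intermediate strand so that $R$ and the covers generate the same order.
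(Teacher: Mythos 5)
Your proof is correct and follows essentially the same route as the paper: factor $\chi$ as $\chi_1\circ\chi_2$, dispose of $\chi_2$ via Theorem~\ref{ESbij}, and identify the good labelings of $d$ with the linear extensions of $\mathcal{P}_d$. The only difference is one of detail: you spell out the key middle step (that the transitive closure of the clockwise relation $R$ coincides with the order of $\mathcal{P}_d$, so the local goodness conditions are equivalent to global order-preservation) via the factor-through-covers induction, whereas the paper asserts this equivalence in a single line in each direction.
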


\begin{proof}
The map $\chi_2 = \Phi : \text{CES}(\overline{\xi}_{\epsilon}) \to \mathcal{D}_{n,\epsilon}(n)$ is a bijection by Theorem~\ref{ESbij}. Thus it is enough to prove that $\chi_1: \mathcal{D}_{n,\epsilon}(n) \to \mathscr{L}(\mathcal{P}_d)$ is a bijection.

First, we show that $\chi_1(d(n)) \in \mathscr{L}(\mathcal{P}_d)$ for any $d(n) \in \mathcal{D}_{n, \epsilon}(n)$. Let $d(n) \in \mathcal{D}_{n, \epsilon}(n)$ and let $f := \chi_1(d(n))$. Since the strand-labeling of $d(n)$ is good, if $(c_1, \ell_1)$ and $(c_2, \ell_2)$ are two labeled strands of $d(n)$ satisfying $c_1 \le c_2$, then $f(c_1) = \ell_1 \le \ell_2 = f(c_2).$ Thus $f$ is order-preserving. As the strands of $d(n)$ are bijectively labeled by $[n]$, we have that $f$ is bijective so $f \in \mathscr{L}(\mathcal{P}_d)$.

Next, define a map 
$$\begin{array}{rcl}
\mathscr{L}(\mathcal{P}_d) & \stackrel{\varphi}{\longrightarrow} & \mathcal{D}_{n,\epsilon}(n)\\
f & \longmapsto & \{(c(i_\ell, j_\ell), f(c(i_\ell, j_\ell)))\}_{\ell \in [n]}.
\end{array}$$
To see that $\varphi(f) \in \mathcal{D}_{n,\epsilon}(n)$ for any $f \in \mathscr{L}(\mathcal{P}_d)$, consider two labeled strands $(c_1, f(c_1))$ and $(c_2, f(c_2))$ belonging to $\varphi(f)$ where $c_1 \le c_2$. Since $f$ is order-preserving, $f(c_1) \le f(c_2).$ Thus the strand-labeling of $\varphi(f)$ is good so $\varphi(f) \in \mathscr{L}(\mathcal{P}_d)$.

Lastly, we have that $$\chi_1(\varphi(f)) = \chi_1(\{(c(i_\ell, j_\ell), f(c(i_\ell, j_\ell)))\}_{\ell \in [n]}) = f$$ and $$\varphi(\chi_1(\{(c(i_\ell, j_\ell), \ell)\}_{\ell \in [n]})) = \varphi(f(c(i_\ell, j_\ell)):= \ell) = \{(c(i_\ell, j_\ell), \ell)\}_{\ell \in [n]}$$ so $\varphi = \chi^{-1}_1$. Thus $\chi_1$ is a bijection.
\end{proof}

\section{Applications}
\label{sec:app}

Here we showcase some interesting results that follow easily from our main theorems.

\subsection{Labeled trees}
In \cite[p. 67]{sw86}, Stanton and White gave a nonpositive formula for the number of vertex-labeled trees with a fixed number of leaves. By connecting our work with that of Goulden and Yong \cite{gy02}, we obtain a positive expression for this number. Here we consider diagrams in $\mathcal{D}_{n,\epsilon}$ where $\epsilon = (-,\ldots,-)$ or $\epsilon = (+,\ldots,+)$. We regard these as chord diagrams to make clear the connection between our work and that of \cite{gy02}.
\begin{theorem}\label{trees}Let $T_{n+1}(r) := \{\text{trees on } [n+1] \text{ with } r \text{ leaves}\}$ and $\mathcal{D}_{n,\epsilon} := \{\text{diagrams } d = \{c(i_\ell,j_\ell)\}_{\ell \in [n]}\}$. Then
$$\begin{array}{rcl}
\#T_{n+1}(r) & = &\displaystyle \sum_{\begin{array}{c}\small\text{$d \in \mathcal{D}_{n,\epsilon}:$ \ $d \text{ has $r$ chords }c(i_j,i_j+1)$}\end{array}}\#\mathscr{L}(\mathcal{P}_d).
\end{array}$$ 
\end{theorem}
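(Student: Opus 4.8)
The plan is to reinterpret the right-hand side entirely in terms of the combinatorics already developed, and then to feed the resulting objects into the bijection of Goulden and Yong. Fix $\epsilon=(-,\dots,-)$; the case $\epsilon=(+,\dots,+)$ follows from the top-to-bottom reflection, which reverses every clockwise relation, hence replaces each $\mathcal{P}_d$ by its opposite poset and changes neither $\#\mathscr{L}(\mathcal{P}_d)$ nor the number of short chords. Under the identification of Figure~\ref{ident_chord_strand}, a diagram $d\in\mathcal{D}_{n,\epsilon}$ is a noncrossing spanning tree of the disk with boundary vertices $0,1,\dots,n$ (a forest with $n$ edges on $n+1$ vertices is automatically a spanning tree), and the chords $c(i,i+1)$---with indices read cyclically, so that the wrap-around chord $c(0,n)$ is counted---are exactly the chords joining two consecutive boundary vertices. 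By Theorem~\ref{CESsandlinexts}, $\#\mathscr{L}(\mathcal{P}_d)$ equals the number of good strand-labelings of $d$, i.e.\ the number of ways to order the $n$ chords so that labels increase in the clockwise sense around every boundary vertex. Summing the identity $\#\text{CES}(\overline{\xi}_\epsilon)=\#\mathscr{L}(\mathcal{P}_d)$ of Theorem~\ref{CESsandlinexts} over all $d$ with exactly $r$ short chords thus rewrites the right-hand side as the number of good-labeled noncrossing trees on $\{0,\dots,n\}$ having exactly $r$ short chords.

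Next I would pass to cycle factorizations. Representing a transposition $(a\,b)$ by the chord joining $a$ and $b$, the work of Goulden--Yong \cite{gy02} identifies the minimal transitive factorizations of the $(n+1)$-cycle $(0\,1\,\cdots\,n)$ into $n$ transpositions with noncrossing spanning trees of $\{0,\dots,n\}$ carrying an edge order that increases clockwise around each vertex. The step to verify here is that this last condition is exactly our notion of a good labeling: both are dictated by the covering relations of $\mathcal{P}_d$, since $c\lessdot c'$ means precisely that $c'$ is clockwise from $c$ with no chord between them at their shared endpoint. Granting this, good-labeled noncrossing trees correspond bijectively to minimal transitive factorizations, and under the correspondence a short chord $c(i,i+1)$ is carried to a factor equal to an adjacent transposition of the cycle. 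Lemma~\ref{rotationinv} is what ensures the wrap-around chord $c(0,n)$ plays the same role as every other short chord, putting all $n+1$ cyclically adjacent positions on equal footing.

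The decisive step is to invoke the enumerative core of \cite{gy02}: the bijection between minimal transitive factorizations of the $(n+1)$-cycle and vertex-labeled trees on $[n+1]$ under which the number of factors equal to adjacent transpositions coincides with the number of leaves. Composing the three correspondences---good-labeled diagrams $\leftrightarrow$ factorizations $\leftrightarrow$ labeled trees---sends a diagram with $r$ short chords to a tree with $r$ leaves, and summing over all diagrams yields the asserted identity. I expect this statistic matching to be the main obstacle: the reductions of the first two paragraphs are bookkeeping on top of Theorems~\ref{ESbij} and~\ref{CESsandlinexts}, whereas equating the short-chord count with the leaf count is the genuine content imported from Goulden--Yong. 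The delicate part is aligning conventions---the cyclic reading of $c(i,i+1)$, the clockwise orientation defining $\mathcal{P}_d$, and the precise normalization of the Goulden--Yong bijection---so that adjacent transpositions are matched with leaves rather than with some complementary statistic such as the number of internal vertices. Small cases are a useful sanity check: for $n=2$ every noncrossing tree has exactly two (cyclically) short chords and the three weighted diagrams account for $\#T_3(2)=3$, while for $n=3$ the four Hamiltonian-path diagrams each contribute a single linear extension, matching $\#T_4(3)=4$, with the remaining eight diagrams contributing $\#T_4(2)=12$.
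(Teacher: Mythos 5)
Your argument is correct and follows the paper's proof essentially verbatim: both first use Theorem~\ref{CESsandlinexts} to rewrite $\sum_d \#\mathscr{L}(\mathcal{P}_d)$ as a count of good-labeled diagrams with $r$ cyclically short chords, and then invoke \cite[Theorem 1.1]{gy02} to biject these with trees on $[n+1]$ having $r$ leaves. The only difference is that you unpack the Goulden--Yong citation through minimal transitive factorizations of the $(n+1)$-cycle, whereas the paper uses it as a black box.
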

\begin{proof}
Observe that 
$$\begin{array}{rcl}
\displaystyle \sum_{\begin{array}{c}\small\text{$d \in \mathcal{D}_{n,\epsilon}$ : $d$ \text{ has $r$}}\\ \small \text{chords $c(i_j,i_j+1)$}\end{array}}\#\mathscr{L}(\mathcal{P}_d) & = & \displaystyle \sum_{\begin{array}{c}\small\text{$d \in \mathcal{D}_{n,\epsilon}: d \text{ has $r$}$}\\ \small \text{$\text{chords $c(i_j, i_j + 1)$}$}\end{array}}\#\{\text{good labelings of $d$}\} \\ 
& = & \#\left\{d(n) \in \mathcal{D}_{n,\epsilon}(n): \begin{array}{l}d(n) \text{ has $r$ chords $c(i_j,i_j + 1)$}\\ \text{for some $i_1,\ldots, i_r \in [0,n]$}\end{array}\right\}
\end{array}$$
where we consider $i_j + 1$ mod $n+1$. By \cite[Theorem 1.1]{gy02}, we have a bijection between diagrams $d \in \mathcal{D}_{n,\epsilon}$ with $r$ chords of the form $c(i_j, i_j + 1)$ for some $i_1, \ldots, i_r  \in [0,n]$ with good labelings and elements of $T_{n+1}(r)$. 
\end{proof}
\begin{corollary}
We have $(n+1)^{n-1} = \sum_{d \in \mathcal{D}_{n,\epsilon}} \#\mathscr{L}(\mathcal{P}_d)$.
\end{corollary}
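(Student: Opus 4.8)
The plan is to obtain this identity simply by summing the formula of Theorem~\ref{trees} over all admissible numbers of leaves $r$. First I would observe that the right-hand quantity $\sum_{d \in \mathcal{D}_{n,\epsilon}} \#\mathscr{L}(\mathcal{P}_d)$ can be reorganized according to how many ``short'' chords a diagram carries: for each $d \in \mathcal{D}_{n,\epsilon}$, let $r(d)$ denote the number of chords of $d$ of the form $c(i_j, i_j + 1)$ (with indices taken mod $n+1$). This is a well-defined nonnegative integer attached to each $d$, so every diagram is counted in the restricted sum of Theorem~\ref{trees} for exactly one value of $r$. Consequently the restricted sums over a fixed number of short chords partition the unrestricted sum, giving
\[
\sum_{d \in \mathcal{D}_{n,\epsilon}} \#\mathscr{L}(\mathcal{P}_d) = \sum_{r} \ \sum_{\substack{d \in \mathcal{D}_{n,\epsilon}:\ r(d) = r}} \#\mathscr{L}(\mathcal{P}_d) = \sum_r \#T_{n+1}(r),
\]
where the last equality is Theorem~\ref{trees} applied termwise.

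Next I would identify the resulting sum on the tree side. Since $T_{n+1}(r)$ is by definition the set of trees on the vertex set $[n+1]$ with exactly $r$ leaves, the disjoint union of these sets over all $r$ is precisely the set of all vertex-labeled trees on $[n+1]$. Hence $\sum_r \#T_{n+1}(r)$ counts all labeled trees on $n+1$ vertices, which by Cayley's formula equals $(n+1)^{(n+1)-2} = (n+1)^{n-1}$. Combining this with the previous display yields the claimed equality.

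There is no real obstacle here: the corollary is a direct specialization of Theorem~\ref{trees} obtained by forgetting the constraint on the number of short chords, combined with the classical enumeration of labeled trees. The only points requiring a word of care are the bookkeeping that each diagram has a definite value of $r(d)$ and is therefore counted exactly once as $r$ ranges over its possible values, and the invocation of Cayley's formula for the total number of labeled trees on $n+1$ vertices.
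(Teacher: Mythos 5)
Your argument is correct and is essentially identical to the paper's own proof: both sum the identity of Theorem~\ref{trees} over all $r$, observe that the restricted sums over diagrams with a fixed number of short chords partition the full sum over $\mathcal{D}_{n,\epsilon}$, and conclude with Cayley's formula $\#T_{n+1} = (n+1)^{n-1}$. No changes needed.
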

\begin{proof}
Let $T_{n+1} := \{\text{trees on [n+1]}\}.$ One has that
$$\begin{array}{cclccc}
(n+1)^{n-1} & = & \#T_{n+1} \\
& = & \displaystyle\sum_{r \ge 0} \#T_{n+1}(r) \\
& = & \displaystyle\sum_{r \ge 0}  \sum_{\begin{array}{c}\small\text{$d \in \mathcal{D}_{n,\epsilon}$ : $d$ \text{ has $r$}}\\ \small \text{chords $c(i_j,i_j+1)$}\end{array}}\#\mathscr{L}(\mathcal{P}_d) & \text{(by Theorem~\ref{trees})} \\
& = & \displaystyle \sum_{d \in \mathcal{D}_{n,\epsilon}} \#\mathscr{L}(\mathcal{P}_d).
\end{array}$$\end{proof}

\subsection{Reddening sequences}
In \cite{k12}, Keller proves that for any quiver $Q$, any two reddening mutation sequences applied to $\widehat{Q}$ produce isomorphic ice quivers.  As mentioned in \cite{kel13}, his proof is highly dependent on representation theory and geometry, but the statement is purely combinatorial---we give a combinatorial proof of this result for type $\mathbb{A}_n$ quivers $Q_\epsilon$.


Let $R \in EG(\widehat{Q})$.  A mutable vertex {$i \in R_0$} is called \textbf{green} if there are no arrows $j \to i$ in $R$ with $j \in [n+1,m]$.  Otherwise, $i$ is called \textbf{red}.  A sequence of mutations $\mu_{i_r}\circ \cdots \circ \mu_{i_1}$ is \textbf{reddening} if all  mutable vertices of the quiver $\mu_{i_r}\circ \cdots \circ \mu_{i_1}(\widehat{Q})$ are red.  Recall that an isomorphism of quivers that fixes the frozen vertices is called a \textbf{frozen isomorphism}.  We now state the theorem.
\begin{theorem}
If $\mu_{i_r}\circ \cdots \circ \mu_{i_1}$ and $\mu_{j_s}\circ \cdots \circ \mu_{j_1}$ are two reddening sequences of $\widehat{Q}_\epsilon$ for some $\epsilon \in \{+,-\}^{n+1}$, then there is a frozen isomorphism $\mu_{i_r}\circ \cdots \circ \mu_{i_1}(\widehat{Q}_{\epsilon}) \cong \mu_{j_s}\circ \cdots \circ \mu_{j_1}(\widehat{Q}_{\epsilon})$.
\end{theorem}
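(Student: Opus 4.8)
The plan is to translate everything into the language of \textbf{c}-matrices and then exploit the classification of Theorem~\ref{c-matClassif}. First I would record the elementary observation that a mutable vertex $i$ of a seed $R \in EG(\widehat{Q}_\epsilon)$ is red if and only if its \textbf{c}-vector $\overrightarrow{c_i}(R)$ is negative: an arrow $j \to i$ from a frozen vertex $j$ is exactly a negative entry of the $i$-th row of $B_R$ in a frozen column, and by sign-coherence this forces the whole row $\overrightarrow{c_i}$ to be negative; conversely a positive (resp. negative) \textbf{c}-vector gives only outgoing (resp. at least one incoming) frozen arrows at $i$. Consequently the terminal seed of a reddening sequence is precisely a seed all of whose \textbf{c}-vectors are negative. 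Writing $R_1 := \mu_{i_r}\circ\cdots\circ\mu_{i_1}(\widehat{Q}_\epsilon)$ and $R_2 := \mu_{j_s}\circ\cdots\circ\mu_{j_1}(\widehat{Q}_\epsilon)$, both $C_{R_1}$ and $C_{R_2}$ have all \textbf{c}-vectors negative, so it suffices to show that such a \textbf{c}-matrix is unique up to a permutation of its rows and that a seed is determined up to frozen isomorphism by its \textbf{c}-matrix.

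For the uniqueness I would argue entirely inside $\overrightarrow{\mathcal{D}}_{n,\epsilon}$. By Lemma~\ref{cmatdiag} and Theorem~\ref{c-matClassif}, a \textbf{c}-matrix all of whose \textbf{c}-vectors are negative corresponds to an oriented diagram $\overrightarrow{d} = \{\overrightarrow{c}(i_\ell,j_\ell)\}_{\ell\in[n]} \in \overrightarrow{\mathcal{D}}_{n,\epsilon}$ in which every strand is oriented from its endpoint of larger index to its endpoint of smaller index (``backward''). Inspecting the two local models of Definition~\ref{def:cmatdiag} at a point $\epsilon_k$, one sees that a backward orientation is compatible only with at most one strand joining $\epsilon_k$ to a point of smaller index and at most one strand joining $\epsilon_k$ to a point of larger index, whether $\epsilon_k = +$ or $\epsilon_k = -$. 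Since the underlying diagram is a forest with $n$ edges on the $n+1$ points $\epsilon_0,\ldots,\epsilon_n$, it is a spanning tree, and the constraint just found says every vertex has at most one neighbor of smaller index and at most one of larger index. A short counting argument then forces this tree to be the path $\epsilon_0 - \epsilon_1 - \cdots - \epsilon_n$: each of the $n$ edges is the unique ``smaller-index'' edge of its larger endpoint, so every vertex except $\epsilon_0$ has exactly one smaller neighbor, and an induction using the ``at most one larger neighbor'' constraint shows the edges must be $\{\epsilon_{k-1},\epsilon_k\}$ for $k\in[n]$. With all strands oriented backward this diagram is $\{\overrightarrow{c}(k,k-1)\}_{k\in[n]}$, which one checks directly satisfies Definition~\ref{def:cmatdiag}; hence it is the unique such diagram, and its \textbf{c}-matrix is $-I_n$ up to row permutation.

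It remains to promote the equality of \textbf{c}-matrices to a frozen isomorphism of seeds. Here I would invoke the standard fact from the theory of cluster algebras that the assignment $R \mapsto C_R$ is injective on $EG(\widehat{Q}_\epsilon)$ --- a seed is determined by its \textbf{c}-matrix (equivalently, via tropical duality $G_R = (C_R^{-1})^{T}$, by its \textbf{g}-matrix, which determines the cluster). Applying this to $R_1$ and $R_2$, whose \textbf{c}-matrices both equal $-I_n$ up to a permutation of rows, shows that $R_1$ and $R_2$ represent the same vertex of $EG(\widehat{Q}_\epsilon)$; that is, $\mu_{i_r}\circ\cdots\circ\mu_{i_1}(\widehat{Q}_\epsilon)$ and $\mu_{j_s}\circ\cdots\circ\mu_{j_1}(\widehat{Q}_\epsilon)$ are frozen-isomorphic (both being the coframed quiver $\widecheck{Q}_\epsilon$).

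I expect the substantive step to be the uniqueness in the second paragraph --- specifically, extracting from the local axioms of Definition~\ref{def:cmatdiag} the ``one smaller / one larger neighbor'' constraint and running the counting argument that pins the diagram down to the backward path. The translation in the first paragraph is routine, and the injectivity of $R \mapsto C_R$ in the third paragraph, while essential, is a known structural result I would cite rather than reprove; the only care needed there is to track the row-permutation ambiguity so that it matches exactly the freedom allowed by a frozen isomorphism.
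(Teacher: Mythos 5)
Your proposal follows essentially the same route as the paper: translate "all vertices red" into "all \textbf{c}-vectors negative," use Theorem~\ref{c-matClassif} to identify the resulting \textbf{c}-matrix with a backward-oriented diagram in $\overrightarrow{\mathcal{D}}_{n,\epsilon}$, deduce from the local configurations of Definition~\ref{def:cmatdiag} that this diagram must be $\{\overrightarrow{c}(k,k-1)\}_{k\in[n]}$ so that $C=-I_n$, and finish with the Nakanishi--Zelevinsky fact that a seed in $EG(\widehat{Q}_\epsilon)$ is determined by its \textbf{c}-matrix. The only difference is that you spell out the counting argument pinning the diagram down to the backward path, a step the paper asserts in one sentence; your version of that step is correct.
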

\begin{proof}
Let $\mu_{i_r}\circ \cdots \circ \mu_{i_1}$ be any reddening sequence.  Denote by $C$ the $\bc$-matrix of $\mu_{i_r}\circ \cdots \circ \mu_{i_1}(\widehat{Q}_{\epsilon})$.  By Theorem~\ref{c-matClassif}, $C$ corresponds to an oriented strand diagram  $\overrightarrow{d}_C \in \overrightarrow{\mathcal{D}}_{n,\epsilon}$ with all strands of the form $\overrightarrow{c}(j,i)$ for some $i$ and $j$ satisfying $i < j$.  As $\overrightarrow{d}_C$ avoids the configurations described in Definition~\ref{def:cmatdiag}, we conclude that $\overrightarrow{d}_C = \{\overrightarrow{c}(i, i-1)\}_{i\in[n]}$ and $C = -I_n$.  Since \textbf{c}-matrices are in bijection with ice quivers in $EG(\widehat{Q}_\epsilon)$ (see \cite[Thm 1.2]{nz12}) and since $\widecheck{Q}_\epsilon$ is an ice quiver in $EG(\widehat{Q}_\epsilon)$ whose \textbf{c}-matrix is $-I_n$, we obtain the desired result.
\end{proof}

\subsection{Noncrossing partitions and exceptional sequences}
In this section, we give a combinatorial proof of Ingalls' and Thomas' result that complete exceptional sequences are in bijection with maximal chains in the lattice of noncrossing partitions \cite{it09}. We remark that their result is more general than that which we present here. Throughout this section, we assume that $Q_\epsilon$ has $\epsilon= (-,\ldots,-)$ and we regard the strand diagrams of $Q_\epsilon$ as chord diagrams.

A \textbf{partition} of $[n]$ is a collection $\pi = \{B_\alpha\}_{\alpha \in I} \in 2^{[n]}$ of subsets of $[n]$ called \textbf{blocks} that are nonempty, pairwise disjoint, and whose union is $[n].$ We denote the lattice of set partitions of $[n]$, ordered by refinement, by $\Pi_n$. A set partition $\pi = \{B_{\alpha}\}_{\alpha \in I} \in \Pi_n$ is called \textbf{noncrossing} if for any $i < j < k < \ell$ where $i, k \in B_{\alpha_1}$ and $j, \ell \in B_{\alpha_2}$, one has $B_{\alpha_1} = B_{\alpha_2}.$ We denote the lattice of noncrossing partitions of $[n]$ by  $NC^{\mathbb{A}}(n)$.

Label the vertices of a convex $n$-gon $\mathcal{S}$ with elements of $[n]$ so that reading the vertices of $\mathcal{S}$ counterclockwise determines an increasing sequence mod $n$. We can thus regard $\pi = \{B_\alpha\}_{\alpha \in I} \in NC^\mathbb{A}(n)$ as a collection of convex hulls $B_\alpha$ of vertices of $\mathcal{S}$ where $B_\alpha$ has empty intersection with any other block $B_{\alpha^\prime}$.

Let $n = 5$. The following partitions all belong to $\Pi_5$, but only $\pi_1, \pi_2, \pi_3 \in NC^\mathbb{A}(5).$ $$\pi_1 = \{\{1\}, \{2,4,5\}, \{3\}\}, \pi_2 = \{\{1,4\}, \{2,3\}, \{5\}\}, \pi_3 = \{\{1,2,3\}, \{4,5\}\}, \pi_4 = \{\{1,3,4\}, \{2,5\}\}$$  Below we represent the partitions $\pi_1, \ldots, \pi_4$ as convex hulls of sets of vertices of a convex pentagon.  We see from this representation that $\pi_4 \not \in NC^\mathbb{A}(5).$

\begin{center}
\includegraphics[scale=.75]{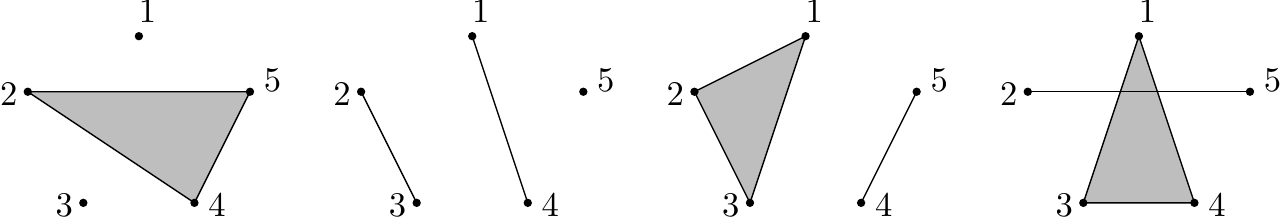}
\end{center}

\begin{theorem}\label{chains}
Let $k \in [n].$ There is a bijection between $\mathcal{D}_{k,\epsilon}(k)$ and the following chains in $NC^{\AAA}(n+1)$ \[\left\{(\pi_1 = \{\{i\}\}_{i \in [n+1]}, \pi_2, \ldots, \pi_{k+1}) \in (NC^\mathbb{A}(n+1))^{k+1}: \begin{array}{c}\pi_j = (\pi_{j-1} \backslash\{B_{\alpha}, B_\beta\})\sqcup \{B_\alpha \sqcup B_\beta\}\\ \text{for some $B_\alpha \neq B_\beta$ in $\pi_{j-1}$}\end{array}\right\}.\] In particular, when $k = n$, there is a bijection between $\mathcal{D}_{n,\epsilon}(n)$ and maximal chains in $NC^{\mathbb{A}}(n+1)$. We remark that each chain described above is \textbf{saturated} (i.e. each inequality appearing in $\{\{i\}\}_{i \in [n+1]}  < \pi_1 < \cdots < \pi_{k}$ is a covering relation). 
\end{theorem}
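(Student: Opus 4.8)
The plan is to realize the bijection by growing a noncrossing partition one chord at a time, letting the good labeling dictate the order of growth. Since $\epsilon=(-,\ldots,-)$, I first pass through the identification of strand diagrams with chord diagrams (Figure~\ref{ident_chord_strand}), so that a diagram $d\in\mathcal{D}_{k,\epsilon}$ is a noncrossing forest on the $n+1$ boundary vertices, which I relabel by $[n+1]$. Given a good-labeled diagram $\{(c(i_\ell,j_\ell),s_\ell)\}_{\ell\in[k]}\in\mathcal{D}_{k,\epsilon}(k)$, I send it to the chain starting at $\pi_1=\{\{i\}\}_{i\in[n+1]}$ in which $\pi_{j+1}$ is the partition of $[n+1]$ into the vertex sets of the connected components of the subforest consisting of all chords with label $\le j$. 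Equivalently, one reads the chords in increasing order of their labels and merges the relevant blocks.

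Two observations show this map lands in the stated set of chains. First, the connected components of any subforest of a noncrossing forest form a noncrossing partition: if the convex hulls of two components crossed, then a path inside one component would be forced to cross a path inside the other, yielding a transversal crossing of two chords and contradicting condition $a)$ in the definition of a strand diagram. Second, adjoining a single chord to a forest never creates a cycle (the full diagram $d$ is itself a forest), so the two endpoints of the chord added at step $j$ lie in distinct components; hence $\pi_{j+1}$ is obtained from $\pi_j$ by merging exactly two blocks. Thus each step has the required form $\pi_j=(\pi_{j-1}\setminus\{B_\alpha,B_\beta\})\sqcup\{B_\alpha\sqcup B_\beta\}$ and is a genuine covering relation, so the chains are saturated; when $k=n$ the forest is a spanning tree and the terminal partition is the single block $[n+1]$, so the chain is maximal.

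For the inverse I reconstruct a good-labeled chord diagram from a chain $\pi_1\lessdot\cdots\lessdot\pi_{k+1}$ by processing the $k$ covering steps in order. At the step where $B_\alpha$ and $B_\beta$ merge I must choose a vertex of $B_\alpha$ and a vertex of $B_\beta$ to join by a chord. The crucial point is that, although two blocks may in general be bridged by more than one noncrossing chord, exactly one such chord is compatible with extending the labeling to a good one; that is, exactly one choice makes the new chord \emph{clockwise from} every previously placed chord at each of its two endpoints. Assigning this chord the label equal to the step index then produces a labeled forest, and I will check that it is noncrossing and that the resulting labeling is good, so that the two constructions are mutually inverse.

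The main obstacle is precisely this chord-reconstruction step, because a covering relation in $NC^{\AAA}(n+1)$ records only which two blocks merge, not which pair of boundary vertices is joined, whereas a chord diagram remembers the latter. For example, merging a block $\{1,3\}$ with the singleton $\{2\}$ can be realized by joining $2$ to $1$ or by joining $2$ to $3$, both keeping the diagram noncrossing. I expect to resolve this by showing that the good-labeling constraint (the clockwise order of chords at each shared endpoint, as in Definition~\ref{Def:labeled_diag}) singles out a unique one of these chords, so that the chord data lost to the partition is exactly recovered from the labeling data; the local case analysis here is the analogue of the clockwise configurations used throughout Section~\ref{firstproofs}. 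A useful consistency check and guide is the enumeration: by the Corollary following Theorem~\ref{trees} one has $\#\mathcal{D}_{n,\epsilon}(n)=\sum_{d}\#\mathscr{L}(\mathcal{P}_d)=(n+1)^{n-1}$, which is exactly the number of maximal chains in $NC^{\AAA}(n+1)$, so for $k=n$ a well-defined injection in either direction already forces a bijection.
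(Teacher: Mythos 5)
Your forward map agrees with the paper's (merging, at step $j$, the blocks containing the two endpoints of the chord labeled $j$ is the same as taking the components of the subforest of chords with label $\le j$), and your justifications that each partition in the chain is noncrossing and that each step merges exactly two blocks are sound --- arguably cleaner than the minimality argument the paper uses for the same point. The problem is the inverse map. You correctly isolate the central difficulty: a covering relation in $NC^{\mathbb{A}}(n+1)$ records only which two blocks merge, not which pair of boundary vertices is joined. But your proposed resolution --- that ``exactly one such chord is compatible with extending the labeling to a good one'' --- is precisely the statement that carries the whole theorem, and you do not prove it; you say you ``expect to resolve'' it. Both halves need an argument: existence (some noncrossing bridging chord can be made the clockwise-most chord at each of its endpoints among the chords already placed) and uniqueness. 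In the all-minus picture this comes down to a case analysis of the clockwise order of chords at a vertex (rightward chords are clockwise from leftward ones; among rightward chords the shorter is clockwise from the longer; among leftward chords the longer is clockwise from the shorter), none of which appears in your write-up. Note also that uniqueness is exactly the injectivity of your forward map, so even your counting fallback for $k=n$ secretly depends on the unproven lemma, and the count from the corollary to Theorem~\ref{trees} says nothing about $k<n$, whereas the theorem is stated for all $k\in[n]$.

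The paper sidesteps this by making the choice explicit rather than implicit: at the step merging $B_\alpha$ and $B_\beta$ it joins the last vertex of $B_\alpha$ encountered before any vertex of $B_\beta$, reading counterclockwise around the circle, to the last vertex of $B_\beta$ encountered before any vertex of $B_\alpha$, and then verifies directly that the resulting chords are pairwise noncrossing and that the two maps compose to the identity. Your formulation is viable --- the unique good-compatible bridging chord is in fact the one produced by the paper's rule --- but as written the proposal is a plan with its load-bearing lemma missing. Either prove that existence-and-uniqueness statement, or replace it with an explicit rule as the paper does.
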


\begin{proof}
Let $d(k) = \{(c(i_\ell,j_\ell),\ell)\}_{\ell \in [k]} \in \mathcal{D}_{k,\epsilon}(k)$. Define $\pi_{d(k),1} := \{\{i\}\}_{i \in [n+1]} \in \Pi_{n+1}.$ Next, define $\pi_{d(k),2} := \left(\pi_{d(k),1}\backslash \{\{i_1 + 1\}, \{j_1+1\}\}\right) \sqcup \{\{i_1+1,j_1+1\}\}$. Now assume that $\pi_{d(k),s}$ has been defined for some $s \in [k]$. Define $\pi_{d(k),s+1}$ to be the partition obtained by merging the blocks of $\pi_{d(k),s}$ containing $i_{s}+1$ and $j_{s}+1$. Now define $f(d(k)) := (\pi_{d(k),1}, \ldots, \pi_{d(k), k+1}).$

It is clear that $f(d(k))$ is a chain in $\Pi_{n+1}$ with the desired property as $\pi_1 \lessdot \pi_2$ in $\Pi_{n+1}$ if and only if $\pi_2$ is obtained from $\pi_1$ by merging exactly two distinct blocks of $\pi_1.$ To see that each $\pi_{d(k),s} \in NC^\mathbb{A}(n+1)$, suppose a crossing of two blocks occurs in a partition appearing in $f(d(k))$. Let $\pi_{d(k),s}$ be the smallest partition of $f(d(k))$ (with respect to the partial order on set partitions) with two blocks crossing blocks $B_1$ and $B_2$. Without loss of generality, we assume that $B_2 \in \pi_{d(k),s}$ is obtained by merging the blocks $B_{\alpha_1}, B_{\alpha_2} \in \pi_{d(k),s-1}$ containing $i_{s-1} + 1$ and $j_{s-1}+1$, respectively. This means that $c(i_{s-1},j_{s-1}) \in d(k)$ and $c(i_{s-1}, j_{s-1})$  crosses at least one other chord of $d(k)$. This contradicts that $d(k) \in \mathcal{D}_{k,\epsilon}(k)$. Thus $f(d(k))$ is a chain in $NC^\mathbb{A}(n+1)$ with the desired property.

Next, we define a map $g$ that is the inverse of $f$. Let $C = (\pi_1 = \{\{i\}\}_{i \in [n+1]}, \pi_2, \cdots, \pi_{k+1}) \in (NC^\mathbb{A}(n+1))^{k+1}$ be a chain where each partition in $C$ satisfies $\pi_j = (\pi_{j-1}\backslash\{B_\alpha, B_\beta\}) \sqcup \{B_\alpha \sqcup B_\beta\}$ for some $B_\alpha \neq B_\beta$ in $\pi_{j-1}$. As $\pi_2 = \left(\pi_1 \backslash \{\{s_1\}, \{t_1\}\}\right)\sqcup \{\{s_1, t_1\}\}$, define $c(i_1,j_1) := c(s_1-1, t_1-1)$ where we consider $s_1 - 1$ and $t_1-1$ mod $n+1.$ Now for $r \ge 2$ let $B_1, B_2 \in \pi_{r-1}$ be the blocks that one merges to obtain $\pi_r$. Define $s_1 \in B_1$ (resp., $t_1 \in B_1$) to be the last element of $B_1$ (resp., $B_2$) that one encounters before any element of $B_2$ (resp., $B_1$) while reading counterclockwise through the integers $1, \ldots, n$. Let $c(i_{r-1},j_{r-1}) := c(s_1-1,t_2-1)$. Finally, put $g(C) := \{(c(i_\ell,j_\ell),\ell): \ell \in [k]\}$.

We claim that $g(C)$ has no crossing chords. Suppose $(c(s_i,t_i), i)$ and $(c(s_j,t_j),j)$ are crossing chords in $g(C)$ with $i < j$ and $i,j \in [k]$. We further assume that $$j = \min\{j^\prime \in [i+1,k]: \text{$(c(s_{j^\prime},t_{j^\prime}), j^\prime)$ crosses $(c(s_i,t_i),i)$ in $g(C)$}\}.$$
We observe that $s_i + 1, t_i + 1 \in B_1$ for some block $B_1 \in \pi_{j}$ and that $s_j+1, t_j+1 \in B_2$ for some block $B_2 \in \pi_{j+1}$. We further observe that $s_j+1, t_j+1 \not \in B_1$ otherwise, by the definition of the map $g$, the chords $(c(s_i,t_i),i)$ and $(c(s_j,t_j),j)$ would be noncrossing. Thus $B_1, B_2 \in \pi_{j+1}$ are distinct blocks that cross so $\pi_{j+1} \not \in NC^\mathbb{A}(n+1).$ We conclude that $g(C)$ has no crossing chords so $g(C) \in \mathcal{D}_{k,\epsilon}(k)$.

To complete the proof, we show that $g\circ f = 1_{\mathcal{D}_{k,\epsilon}(k)}$. The proof that $f \circ g$ is the identity map is similar. Let $d(k) \in \mathcal{D}_{k,\epsilon}(k)$. Then $f(d(k)) = (\pi_1 = \{\{i\}\}_{i \in [n+1]}, \pi_2, \ldots, \pi_{k+1})$ where for any $s \in [k]$ we have $$\pi_s = \left(\pi_{s-1}\backslash\{B_\alpha, B_\beta\}\right) \sqcup \{B_\alpha, B_\beta\}$$ where $i_{s-1} + 1 \in B_\alpha$ and $j_{s-1} + 1 \in B_\beta.$ Then we have $g(f(d(k)))  =  \{c((i_\ell +1) - 1, (j_\ell +1) - 1), \ell)\}_{\ell \in [k]} = \{(c(i_\ell, j_\ell), \ell)\}_{\ell \in [k]}.$\end{proof}

\begin{corollary}
If $\epsilon = (-,\ldots, -)$ of $\epsilon = (+, \ldots, +)$, then the exceptional sequences of $Q_\epsilon$ are in bijection with saturated chains in $NC^\mathbb{A}(n+1)$ of the form \[\left\{(\pi_1\{\{i\}\}_{i \in [n+1]}, \pi_2, \ldots, \pi_{k+1}) \in (NC^\mathbb{A}(n+1))^{k+1}: \begin{array}{c}\pi_j = (\pi_{j-1} \backslash\{B_{\alpha}, B_\beta\})\sqcup \{B_\alpha \sqcup B_\beta\}\\ \text{for some $B_\alpha \neq B_\beta$ in $\pi_{j-1}$}\end{array}\right\}.\]
\end{corollary}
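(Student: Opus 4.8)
The plan is to obtain the corollary by stitching together the two bijections already established and then ranging over all admissible lengths. By definition, an exceptional sequence of $Q_\epsilon$ is an element of $\mathcal{E}_\epsilon(k)$ for some $k \in [n]$, so the collection of all exceptional sequences is the disjoint union $\bigsqcup_{k \in [n]} \mathcal{E}_\epsilon(k)$. Theorem~\ref{ESbij} provides, for each $k$, a bijection $\widetilde{\Phi}_\epsilon \colon \mathcal{E}_\epsilon(k) \to \mathcal{D}_{k,\epsilon}(k)$, and Theorem~\ref{chains} provides a bijection between $\mathcal{D}_{k,\epsilon}(k)$ and the set of length-$(k+1)$ saturated chains in $NC^{\mathbb{A}}(n+1)$ of the stated form. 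Composing these two maps and taking the disjoint union over $k \in [n]$ yields the desired bijection in the case $\epsilon = (-,\ldots,-)$, since the right-hand side of the corollary is precisely the disjoint union over $k$ of the chain-sets appearing in Theorem~\ref{chains}.

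The only point requiring care is that Theorem~\ref{chains} is stated and proved under the running assumption $\epsilon = (-,\ldots,-)$, whereas the corollary also asserts the result for $\epsilon = (+,\ldots,+)$. I would dispatch the positive case by a reflection symmetry: reflecting $\mathbb{R}^2$ across a horizontal line carries the marked points $\mathcal{S}_{n,(+,\ldots,+)}$ to $\mathcal{S}_{n,(-,\ldots,-)}$ and sends each strand $c(i,j)$ to a strand $c(i,j)$, because condition $c)$ of the definition of a strand is preserved (``locally below a $+$ point'' becomes ``locally above a $-$ point''). Hence this reflection identifies $\mathcal{D}_{k,(+,\ldots,+)}$ with $\mathcal{D}_{k,(-,\ldots,-)}$ as sets of strand diagrams. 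Under the reflection the notions \emph{clockwise} and \emph{counterclockwise} are interchanged, so a good labeling is not preserved on the nose; to repair this I would postcompose with the label-reversal $s_\ell \mapsto k+1-s_\ell$, which reverses the clockwise order at every vertex and therefore converts a good labeling for one sign into a good labeling for the other. This produces a bijection $\mathcal{D}_{k,(+,\ldots,+)}(k) \cong \mathcal{D}_{k,(-,\ldots,-)}(k)$, after which the composition of the first paragraph applies verbatim.

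Alternatively, and perhaps more cleanly, one can observe that the proof of Theorem~\ref{chains} never uses the sign vector beyond the identification of diagrams with noncrossing chord diagrams: the chain $f(d(k))$ is built by merging the blocks indexed by the endpoints of the strands in the order dictated by the labeling, and the verification that each resulting partition is noncrossing uses only that the chords of $d(k)$ are pairwise noncrossing, which is a sign-free condition. Thus the argument transfers unchanged to $\epsilon = (+,\ldots,+)$. I expect the main — and essentially the only genuine — obstacle to be exactly this bookkeeping of the positive case: ensuring that the clockwise/counterclockwise convention in the good-labeling condition is tracked consistently when passing from the $-$ orientation to the $+$ orientation. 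Everything else is a formal composition of bijections already proved, together with the observation that an exceptional sequence is by definition an element of some $\mathcal{E}_\epsilon(k)$.
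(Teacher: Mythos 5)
Your proposal is correct and follows exactly the route the paper intends: the corollary is an immediate composition of the bijection $\widetilde{\Phi}_\epsilon$ of Theorem~\ref{ESbij} with the bijection of Theorem~\ref{chains}, summed over $k \in [n]$. Your extra care in transporting the statement from $\epsilon = (-,\ldots,-)$ to $\epsilon = (+,\ldots,+)$ (via reflection together with label reversal, or equivalently via the quiver isomorphism $i \mapsto n+1-i$) addresses a point the paper glosses over, and your repair of the good-labeling condition under reflection is the right one.
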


\begin{figure}
$$\begin{array}{ccc}
\includegraphics[scale=.7]{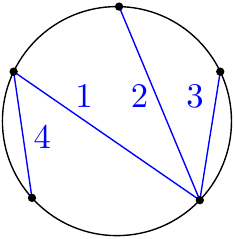} & \raisebox{.3in}{$\longmapsto$} & \raisebox{.1in}{\includegraphics[scale=.7]{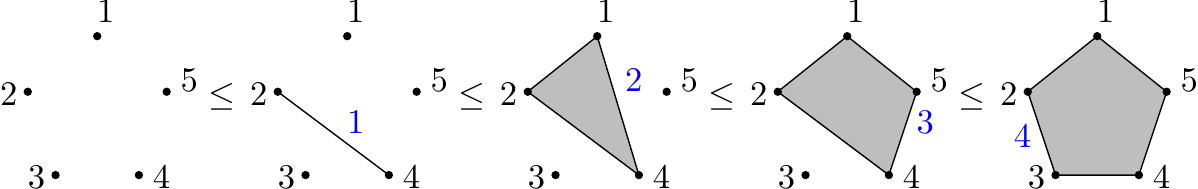}}\\
\includegraphics[scale=.7]{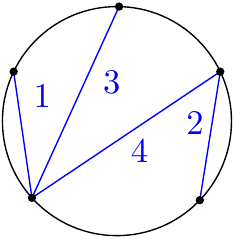} & \raisebox{.3in}{$\longmapsto$} & \raisebox{.1in}{\includegraphics[scale=.7]{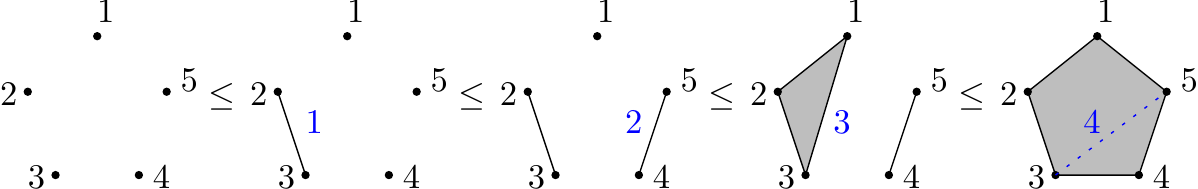}}
\end{array}$$
\caption{Two labeled strand diagrams and their corresponding maximal chains in $NC^{\mathbb{A}}(5).$}
\label{fig_max_chain_bij}
\end{figure}

\begin{example}
In Figure~\ref{fig_max_chain_bij}, we give two examples of the bijection from Theorem~\ref{chains} with $k = 4$.
\end{example}

\newpage

\nocite{*}
\bibliographystyle{alpha}
\bibliography{refs}

\newcommand{\etalchar}[1]{$^{#1}$}
\begin{thebibliography}{ONA{\etalchar{+}}13}

\bibitem[Ara99]{a99}
T.~Araya.
\newblock Exceptional sequences over graded {C}ohen-{M}acaulay rings.
\newblock {\em Math. J. Okayama Univ.}, 41:81--102, 1999.

\bibitem[Ara13]{a13}
T.~Araya.
\newblock Exceptional sequences over path algebras of type {$A_n$} and
  non-crossing spanning trees.
\newblock {\em Algebr. Represent. Theory}, 16(1):239--250, 2013.

\bibitem[ASS06]{ass06}
I.~Assem, D.~Simson, and A.~Skowro{{\'n}}ski.
\newblock {\em Elements of the representation theory of associative algebras.
  {V}ol. 1}, volume~65 of {\em London Mathematical Society Student Texts}.
\newblock Cambridge University Press, Cambridge, 2006.
\newblock Techniques of representation theory.

\bibitem[Bes03]{b03}
D.~Bessis.
\newblock The dual braid monoid.
\newblock {\em Ann. Sci. {\'E}cole Norm. Sup. (4)}, 36(5):647--683, 2003.

\bibitem[Bez03]{bez03}
R.~Bezrukavnikov.
\newblock Quasi-exceptional sets and equivariant coherent sheaves on the
  nilpotent cone.
\newblock {\em Represent. Theory}, 7:1--18 (electronic), 2003.

\bibitem[BK89]{bk89}
A.~I. Bondal and M.~M. Kapranov.
\newblock Representable functors, {S}erre functors, and reconstructions.
\newblock {\em Izv. Akad. Nauk SSSR Ser. Mat.}, 53(6):1183--1205, 1337, 1989.

\bibitem[BV08]{bv08}
A.~B. Buan and D.~F. Vatne.
\newblock Derived equivalence classification for cluster-tilted algebras of
  type {$A_n$}.
\newblock {\em J. Algebra}, 319(7):2723--2738, 2008.

\bibitem[BY14]{by14}
T.~Br\"{u}stle and D.~Yang.
\newblock Ordered exchange graphs.
\newblock {\em arXiv:1302.6045v4}, 2014.

\bibitem[CB93]{c93}
W.~Crawley-Boevey.
\newblock Exceptional sequences of representations of quivers [ {MR}1206935
  (94c:16017)].
\newblock In {\em Representations of algebras ({O}ttawa, {ON}, 1992)},
  volume~14 of {\em CMS Conf. Proc.}, pages 117--124. Amer. Math. Soc.,
  Providence, RI, 1993.

\bibitem[Cha12]{c12}
A.~N. Chavez.
\newblock On the c-vectors of an acyclic cluster algebra.
\newblock {\em arXiv:1203.1415}, 2012.

\bibitem[DWZ10]{dwz10}
H.~Derksen, J.~Weyman, and A.~Zelevinsky.
\newblock Quivers with potentials and their representations {II}: applications
  to cluster algebras.
\newblock {\em J. Amer. Math. Soc.}, 23(3):749--790, 2010.

\bibitem[FZ02]{fz02}
S.~Fomin and A.~Zelevinsky.
\newblock Cluster algebras. {I}. {F}oundations.
\newblock {\em J. Amer. Math. Soc.}, 15(2):497--529 (electronic), 2002.

\bibitem[GG13]{gg13}
E.~Gorsky and M.~Gorsky.
\newblock A braid group action on parking functions.
\newblock {\em arXiv:1112.0381v2}, 2013.

\bibitem[GM]{gm2}
A.~Garver and J.~Matherne.
\newblock Linear extensions and exceptional sequences.
\newblock {\em in preparation}.

\bibitem[GM15]{gm1}
A.~Garver and J.~Matherne.
\newblock A combinatorial model for exceptional sequences in type {$A$}.
\newblock {\em DMTCS (Disc. Math. \& Theor. Comp. Sci.) proc.},
  FPSAC'15:393--404, 2015.

\bibitem[GR87]{gr87}
A.~L. Gorodentsev and A.~N. Rudakov.
\newblock Exceptional vector bundles on projective spaces.
\newblock {\em Duke Math. J.}, 54(1):115--130, 1987.

\bibitem[GY02]{gy02}
I.~Goulden and A.~Yong.
\newblock Tree-like properties of cycle factorizations.
\newblock {\em J. Combin. Theory Ser. A}, 98(1):106--117, 2002.

\bibitem[Hil06]{h06}
L.~Hille.
\newblock On the volume of a tilting module.
\newblock {\em Abh. Math. Sem. Univ. Hamburg}, 76:261--277, 2006.

\bibitem[HK13]{hk13}
A.~Hubery and H.~Krause.
\newblock A categorification of non-crossing partitions.
\newblock {\em arXiv:1310.1907}, 2013.

\bibitem[IO13]{io13}
K.~Igusa and J.~Ostroff.
\newblock Mixed cobinary trees.
\newblock {\em arXiv:1307.3587v3}, 2013.

\bibitem[IS10]{is10}
K.~Igusa and R.~Schiffler.
\newblock Exceptional sequences and clusters.
\newblock {\em J. Algebra}, 323(8):2183--2202, 2010.

\bibitem[IT09]{it09}
C.~Ingalls and H.~Thomas.
\newblock Noncrossing partitions and representations of quivers.
\newblock {\em Compos. Math.}, 145(6):1533--1562, 2009.

\bibitem[Kel12]{k12}
B.~Keller.
\newblock Cluster algebras and derived categories.
\newblock In {\em Derived categories in algebraic geometry}, EMS Ser. Congr.
  Rep., pages 123--183. Eur. Math. Soc., Z{\"u}rich, 2012.

\bibitem[Kel13]{kel13}
B.~Keller.
\newblock Quiver mutation and combinatorial {DT}-invariants.
\newblock FPSAC 2013 Abstract, 2013.

\bibitem[Mel04]{m04}
H.~Meltzer.
\newblock Exceptional vector bundles, tilting sheaves and tilting complexes for
  weighted projective lines.
\newblock {\em Mem. Amer. Math. Soc.}, 171(808):viii+139, 2004.

\bibitem[NZ12]{nz12}
T.~Nakanishi and A.~Zelevinsky.
\newblock On tropical dualities in cluster algebras.
\newblock In {\em Algebraic groups and quantum groups}, volume 565 of {\em
  Contemp. Math.}, pages 217--226. Amer. Math. Soc., Providence, RI, 2012.

\bibitem[ONA{\etalchar{+}}13]{onawfr13}
M.~Obaid, K.~Nauman, W.~S.~M. Al{-}Shammakh, W.~Fakieh, and C.~M. Ringel.
\newblock The number of complete exceptional sequences for a {D}ynkin algebra.
\newblock {\em Colloq. Math.}, 133(2):197--210, 2013.

\bibitem[Rin94]{r94}
C.~M. Ringel.
\newblock The braid group action on the set of exceptional sequences of a
  hereditary {A}rtin algebra.
\newblock In {\em Abelian group theory and related topics ({O}berwolfach,
  1993)}, volume 171 of {\em Contemp. Math.}, pages 339--352. Amer. Math. Soc.,
  Providence, RI, 1994.

\bibitem[Rud90]{r90}
A.~N. Rudakov.
\newblock Exceptional collections, mutations and helices.
\newblock In {\em Helices and vector bundles}, volume 148 of {\em London Math.
  Soc. Lecture Note Ser.}, pages 1--6. Cambridge Univ. Press, Cambridge, 1990.

\bibitem[ST13]{st13}
D.~Speyer and H.~Thomas.
\newblock Acyclic cluster algebras revisited.
\newblock In {\em Algebras, quivers and representations}, volume~8 of {\em Abel
  Symp.}, pages 275--298. Springer, Heidelberg, 2013.

\bibitem[SW86]{sw86}
D.~Stanton and D.~White.
\newblock {\em Constructive combinatorics}.
\newblock Undergraduate Texts in Mathematics. Springer-Verlag, New York, 1986.

\end{thebibliography}
\label{sec:biblio}

\end{document}